\def\T{{ \mathrm{\scriptscriptstyle T} }}
\def\##1\#{\begin{align}#1\end{align}}
\def\$#1\${\begin{align*}#1\end{align*}}
\def\T{{ \mathrm{\scriptscriptstyle T} }} 
\newcommand{\Rom}[1]{\text{\uppercase\expandafter{\romannumeral #1\relax}}}
\newcommand\mc[1]{\mathcal{#1}}
\newcommand\mbf[1]{\mathbf{#1}}
\newcommand\mbb[1]{\mathbb{#1}}
\newcommand\rank[1]{\mathrm{rank}(#1)}
\def\P{\mathbb{P}} 
\def\E{\mathbb{E}} 
\def\Pa{\mathbb{P}_{(\sigma, \xi, \bm{\beta})}}
\def\Ea{\mathbb{E}_{(\sigma, \xi, \bm{\beta})}}
\def\Pn{\mathbb{P}_{\star}}
\def\En{\mathbb{E}_{\star}}
\begin{document}

\title{ \LARGE Bayesian high-dimensional linear regression\\with generic spike-and-slab priors}

\author{Bai Jiang\thanks{ByteDance AI Lab, 250 Bryant Street, Mountain View, CA 94041, USA; Email: \url{bai.jiang@bytedance.com}.} ~ and~Qiang Sun\thanks{Department of Statistical Sciences, University of Toronto, 100 St. George Street, Toronto, ON M5S 3G3, Canada; Email: \url{qsun@utstat.toronto.edu}.}}


\date{}

\maketitle

\vspace{-0.25in}

\begin{abstract}
Spike-and-slab priors are popular Bayesian solutions for high-dimensional linear regression problems. Previous theoretical studies on spike-and-slab methods focus on specific prior formulations and use prior-dependent conditions and analyses, and thus can not be generalized directly. In this paper, we propose a class of generic spike-and-slab priors and develop a unified framework to rigorously assess their theoretical properties. Technically, we provide general conditions under which generic spike-and-slab priors can achieve the nearly-optimal posterior contraction rate and the model selection consistency. Our results include those of \citet{narisetty2014bayesian} and \citet{castillo2015bayesian} as special cases.
\end{abstract}
\noindent
{\bf Keywords}: high-dimensional linear regression, generic spike-and-slab prior, model selection, posterior contraction.

\section{Introduction}\label{sec:1}

Consider the linear regression model
\begin{equation} \label{model}
\mbf{Y} = \mbf{X}\bm{\beta}^\star +\sigma_\star \bm{\varepsilon},
\end{equation}
where $\mbf{X} \in\RR^{n\times p}$ is a deterministic design matrix, $\bm{\beta}^\star\in \RR^{p}$ is a vector of unknown regression coefficients, $\sigma_\star > 0$ is unknown standard deviation, and $\bm{\varepsilon} \sim \mathcal{N}(\bm{0}, \mbf{I}_n)$ is a standard normal vector. We are interested in parameter estimation and model selection in the high-dimensional regime where $p\gg n$ and a small number $s$ of covariates contribute to the response. Formally, we assume the index set of non-zero regression coefficients $\xi_\star := \{j: \beta^\star_j \ne 0\}$ is $s$-sparse. The goals are to estimate unknown parameters $(\sigma_\star, \bm{\beta}^\star)$ and to identify the true sparse model (index set) $\xi_\star$.

For this high-dimensional linear regression problem, many methods have been proposed from the Bayesian perspective. They commonly encourage sparsity of the regression coefficients by adopting suitable priors \citep{park2008bayesian, hans2009bayesian, carvalho2010horseshoe, griffin2012structuring, armagan2013generalized, narisetty2014bayesian, castillo2015bayesian, bhattacharya2015dirichlet, martin2017empirical, rovckova2018spike}. These priors can be mainly categorized into two categories: shrinkage priors and  spike-and-slab priors.

The shrinkage priors are directly motivated by the equivalence between regularized maximum likelihood estimators in the frequentist framework (among others, \citealp{tibshirani1996regression, fan2001variable, zou2006adaptive, candes2007dantzig,  zhang2010nearly}) and \textit{maximum a posteriori} estimators in the Bayesian framework. Examples include the Bayesian lasso prior \citep{park2008bayesian, hans2009bayesian}, the horseshoe prior \citep{carvalho2010horseshoe, polson2012local}, the correlated normal-gamma prior \citep{griffin2012structuring}, the double Pareto prior \citep{armagan2013generalized}, the Dirichlet-Laplace prior \citep{bhattacharya2015dirichlet} and the spike-and-slab lasso prior \citep{rovckova2018spike}, to name a few. Recently \citet{song2017nearly} provide sufficient conditions for generic shrinkage priors to achieve the nearly-optimal parameter estimation rate and the model selection consistency.

The spike-and-slab priors are hierarchical priors which naturally arise from the probabilistic consideration of the high-dimensional linear regression model \eqref{model}. A generic spike-and-slab prior $\Pi(\sigma, \xi, \bm{\beta})$ takes the form of 
\begin{equation} \label{prior}
\begin{split}
\sigma^2 &\sim g(\sigma^2),\\
\xi & \sim \pi(\xi),~~~\xi \subseteq \{1,\dots,p\},\\
\beta_j|\sigma^2 &\sim h_0(\beta_j/\sigma)/\sigma,~~~\forall~j \not \in \xi, \\
\beta_j|\sigma^2 &\sim h_1(\beta_j/\sigma)/\sigma,~~~\forall~j \in \xi,
\end{split}
\end{equation}
where $g$ is a density function over $(0,\infty)$, $\xi \subseteq \{1,\dots,p\}$ indexes all possible $2^p$ subset models, $\pi(\xi)$ is a model selection prior introducing model sparsity, $h_0$ is a ``spike" distribution for modeling negligible coefficients (e.g., the Dirac measure at $0$) and $h_1$ is a ``slab'' distribution for modeling significant coefficients. 
This generic spike-and-slab prior of form \eqref{prior} dates back to the Dirac-and-slab priors \citep{mitchell1988bayesian, george1993variable, johnson2012bayesian} and mixture Gaussian priors \citep{ishwaran2005spike} in the small-$p$-large-$n$ setup. Later, in the high-dimensional regime, \citet{narisetty2014bayesian} showed that a mixture Gaussian prior with shrinking and diffusing scale hyper-parameters consistently selects the true sparse model. \citet{castillo2015bayesian} studied both the parameter estimation rate and the model selection consistency for Dirac-and-Laplace priors. Other recent works also consider the correlated Gaussian distribution as the slab prior \citep{yang2016computational, martin2017empirical}.

Despite of these works, theoretical properties for generic spike-and-slab priors of form \eqref{prior} remain unclear. Previous works usually narrow their focuses down to specific choices of spike-and-slab formulations, such as the mixture Gaussian prior \citep{narisetty2014bayesian} and the Dirac-and-Laplace prior \citep{castillo2015bayesian}, and conduct theoretical assessments under conditions that cope with their choices of formulations. Consequently, their analyses rely on various conditions and are not generalizable to other spike-and-slab priors.

Regarding the model selection prior $\pi(\xi)$, a popular choice is the independently and identically distributed (i.i.d.) Bernoulli prior \citep{narisetty2014bayesian}, in which each covariate $j$ is independently selected into the model $\xi$ with probability $1/p$. \citet[Assumption 1]{castillo2015bayesian} considered a class of model selection priors exponentially decreasing on model size, which we refer to as the Castillo-Schimdt-Vaart priors or simply the CSV priors. Note that the i.i.d. Bernoulli prior does not belong to the class of CSV priors. Regarding the spike and slab distributions, popular choices include the Laplace or the Gaussian distribution and the Dirac measure at $0$ as the spike distribution only \citep{narisetty2014bayesian, castillo2015bayesian, rovckova2018spike}. Although some combinations of the above-mentioned model selection priors, spike priors and slab priors have been recognized, the potential of other combinations for Bayesian high-dimensional linear regression has been overlooked.

On the other hand, different conditions on the eigen-structure of the Gram matrix $\mbf{X}^\T\mbf{X}$ have been proposed and assumed. Examples include conditions on the minimum non-zero eigenvalue \citep[\texttt{MNEV}]{narisetty2014bayesian}, the minimum restricted eigenvalue \citep[\texttt{MREV}]{castillo2015bayesian}, and the minimum sparse eigenvalue \citep[\texttt{MSEV}]{song2015high}. The \texttt{MREV} condition has been widely assumed for frequentist methods \citep{bickel2009simultaneous, raskutti2010restricted, fan2018lamm}. As for Bayesian methods, however, it is unclear how these different conditions relate to each other and whether the results built on one of them can transfer to the other.

In this paper, we develop a unified framework to analyze Bayesian methods with generic spike-and-slab priors. This framework could not only facilitate theoretical assessments of a broad class of spike-and-slab priors, but also unifies seemingly different conditions for existing spike-and-slab methods.

First, for the parameter estimation task, we give a high-level condition for model selection prior $\pi(\xi)$, which is satisfied by both CSV priors and the i.i.d. Bernoulli prior. Another interesting finding is that \texttt{MNEV}, \texttt{MREV} and \texttt{MSEV} are lower bounds of a quantity, which we call the \textit{minimum united eigenvalue} (\texttt{MUEV}) and denote by $\lambda$. A positive $\lambda$ suffices for the Bayesian spike-and-slab methods to succeed. Second, for the model selection task, we show that, under the commonly-seen beta-min condition \citep[Corollary 7.6]{buhlmann2011statistics}, the generic spike-and-slab prior selects overfitted models that overshoot the true model size by no more than a constant factor. Finally, we identify two more technical conditions, which enable eliminating false discoveries in the overfitted models and consistently selecting true sparse model. Conditions for previous spike-and-slab methods \citep{castillo2015bayesian, narisetty2014bayesian} are shown to be special cases of our conditions tailored to their specific spike-and-slab priors.

The rest of the paper proceeds as follows. Section \ref{sec2} introduces a class of generic spike-and-slab priors. Section \ref{sec2.5} builds up the posterior contraction of parameters $(\sigma, \bm{\beta})$ upon the new local eigenvalue condition relating to \texttt{MUEV} $\lambda$. Section \ref{sec3} presents additional conditions and theorems for the model selection task. Section \ref{sec4} sketches the proofs of theorems, with proofs of technical lemmas deferred to the appendix. Section \ref{sec5} concludes the paper with a discussion.

\subsubsection*{Notation}
For the high-dimensional linear regression model \eqref{model}, both $p$ and $s$ should be understood as sequences of $n$, i.e., $p=p_n$ and $s=s_n$, although their subscripts $n$ are omitted. Similarly, for the spike-and-slab prior $\Pi(\sigma, \xi, \bm{\beta})$ specified by \eqref{prior}, $\pi$, $h_0$ and $h_1$ should be understood as sequences of distributions $\pi_n$, $h_{0n}$ and $h_{1n}$. Let
$$\epsilon_n := \sqrt{s \log p/n},$$
be the nearly-optimal\footnote{The optimal $\ell_2$-estimation error rate is $\sqrt{s \log(p/s)/n}$; see \citet{raskutti2011minimax} and \citet{su2016slope} among others.} $\ell_2$-estimation error rate for estimating $\bm{\beta}^\star$.

For  $\bm{\beta} \in \mathbb{R}^p$, let $\beta_j$ denote its $j$-th component and $\bm{\beta}_\xi$ denote its sub-vector consisting of coordinates in the subset $\xi \subseteq \{1,\dots,p\}$. We also call the index set $\xi$ a model in the context of model selection. For a vector $\bm{v}$, let $\Vert \bm{v} \Vert_q$ with $q \in [1,\infty]$ denote its $\ell_q$-norm. For $\ell_2$-norm, we omit the subscript $2$ and write $\Vert \bm{v} \Vert$ for simplicity.

We write $\mbf{X}_j$ to denote the $j$-th column of $\mbf{X}$ and $\mbf{X}_\xi$ to denote the sub-matrix consisting of columns indexed by  $\xi \in \{1,\dots,p\}$. For a model $\xi$, let $|\xi|$ be its cardinality, and $\rank{\xi}$ be the column rank of $\mbf{X}_\xi$. The model $\xi$ is said to be of full rank if $\rank{\xi} = |\xi|$. Let $\mc{F}$ denote the set of all full-rank models. Formally
$$\mc{F} := \{\xi \subseteq \{1,\dots,p\}: \rank{\xi} = |\xi|\}.$$
Let $\mbf{P}_\xi$ denote the projection matrix onto the column space of $\mbf{X}_\xi$. Note that, in case of $\xi \in \mc{F}$,
$\mbf{P}_\xi = \mbf{X}_\xi\left(\mbf{X}_\xi^\T\mbf{X}_\xi\right)^{-1}\mbf{X}_\xi^\T$. For $\xi \in \mc{F}$, let $\mbf{X}_\xi^\dagger =  \left(\mbf{X}_\xi^\T\mbf{X}_\xi\right)^{-1}\mbf{X}_\xi^\T$ be the left pseudo-inverse of $\mbf{X}_\xi$.

For a symmetric matrix $\mbf{A}$, we write its largest eigenvalue as $\lambda_{\max}(\mbf{A})$ and its smallest eigenvalue as $\lambda_{\min}(\mbf{A})$. For two symmetric matrices $\mbf{A}$ and $\mbf{B}$, $\mbf{A} \ge \mbf{B}$ or $\mbf{B} \le \mbf{A}$ means $\mbf{A}-\mbf{B}$ is positive semi-definite. For two positive sequences $a_n$ and $b_n$, $a_n \prec b_n$ or $b_n \succ a_n$ means $\limsup_{n \to \infty} a_n/b_n = 0$; $a_n \preceq b_n$ or $b_n \succeq a_n$ means $\limsup_{n \to \infty} a_n/b_n < \infty$; $a_n \asymp b_n$ means $a_n \preceq b_n$ and $a_n \succeq b_n$; $a_n \lesssim b_n$ or $b_n \gtrsim a_n$ means $a_n > b_n$ for sufficiently large $n$. We write $o(1)$ to denote an arbitrarily small positive constant. 

Let $\Pa$ and $\Ea$ denote the  measure and expectation associated with model \eqref{model} with parameters $(\sigma, \xi, \bm{\beta})$. Write $\P_{(\sigma_\star, \xi_\star, \bm{\beta}^\star)}$ and $\E_{(\sigma_\star, \xi_\star, \bm{\beta}^\star)}$ as $\Pn$ and $\En$ for simplicity. When the probability $\Pa(E)$ of an event $E$ does not depend on $(\sigma, \xi, \bm{\beta})$, we write $\Pa(E)$ as $\P(E)$.

\section{A Class of Generic Spike-and-slab Priors} \label{sec2}
Throughout the paper, we tacitly assume that the response vector $\mbf{Y}$ has been centered at $0$, and thus include no intercept term in the linear regression model \eqref{model}. Each covariate $\mbf{X}_j$ is centered and standardized such that $\Vert \mbf{X}_j \Vert  = \sqrt{n}$. The true standard deviation $\sigma_\star \in (0,\infty)$ is fixed. The true model $\xi_\star$ is of full rank. We focus on the asymptotic regime where $p > n$ but $\epsilon_n = \sqrt{s\log p/n} \to 0$.

Our following assumption specifies a class of generic spike-and-slab priors.
\begin{assumption}[On Prior] \label{asm1}\mbox{}
\begin{enumerate}[label=(\alph*)]
\item Variance prior: The density function of variance $g(\sigma^2)$ is continuous and positive for any $\sigma^2 \in (0,+\infty)$.
\item Model selection prior: The model $\xi$ is selected \textit{a priori} with probability being proportional to $\pi(\xi)1\{\xi \in \mc{F}\}$; And, $\pi(\xi)$ satisfies $\pi(\emptyset) \asymp 1$, and with constants $A_1, A_2>0$,
$$\pi(\xi_\star) \ge p^{-A_1s},~~~\sum_{\xi:~|\xi| > t} \pi(\xi) \le p^{-A_2t},~\forall t\ge 1$$
for sufficiently large $n$\footnote{Recall that both $p = p_n$ and $s=s_n$ are sequences of $n$.}.
\item Spike prior: The sequence $z_{0n}$ such that 
$\int 1\left\{|z| > z_{0n} \right\} h_0(z)dz = e^{-n}$
satisfies
$$z_{0n} \prec \frac{1}{p}\sqrt{\frac{\log p}{n}}.$$
\item Slab prior: For $z_{1n} := \max_{j \in \xi_\star} |\beta^\star_j/\sigma_\star|+\epsilon_n$ and some constant $A_3>0$, the slab density function $h_1(z)$ satisfies
$$\inf_{z:~|z| \le z_{1n}} h_1(z) \succeq p^{-A_3}.$$
\end{enumerate}
\end{assumption}

Assumption \ref{asm1}(a) is satisfied when $g$ is the inverse-gamma density function. If $\sigma_\star^2$ is known to take values in an interval, we could set $g$ to be a continuous and positive density function over that interval, e.g., the truncated inverse-gamma density function. 

Assumption \ref{asm1}(b) requires the model selection prior $\pi(\xi)$ to assign a sufficient mass to the true sparse model $\xi_\star$, and to downweight large models. The following examples show that this requirement is met by the commonly-used i.i.d. Bernoulli prior \citep{narisetty2014bayesian}, and the CSV priors including the complextity prior and the Binomial-Beta prior \citep{castillo2012needles}. Appendix \ref{appendix:A} collects the detailed proofs.  

\begin{example}[I.I.D. Bernoulli Prior] \label{example:bernoulli}
The i.i.d. Bernoulli prior $\pi(\xi)$ selects each index $j$ into $\xi$ with probability $1/p$. This prior meets the condition for $\pi$ in Assumption \ref{asm1}(b) with $\pi(\emptyset) \approx e^{-1}$, and any $A_1 > 1$ and $A_2 = 1$. The deduction of $A_2=1$ needs a novel tail probability inequality due to \citet[Theorem 1]{pelekis2016lower}.
\end{example}

\begin{example}[CSV Prior] \label{example:csv}
Suppose $w(t)$ is a discrete distribution over possible model sizes $t \in \{0,\dots,p\}$, and
$$B_1 p^{-B_3} w(t-1) \le w(t) \le B_2 p^{-B_4} w(t-1),~~~t=1,\dots,p,$$
with constants $B_1, B_2, B_3, B_4 > 0$. The prior $\pi(\xi)$ for model selection given by
$$\pi(\xi) = w(|\xi|){p \choose |\xi|}^{-1}$$
meets Assumption \ref{asm1}(b) with any $A_1 > B_3+1$ and any $A_2 < B_4$.
\end{example}



Assumption \ref{asm1}(c) requires the spike distribution to be the Dirac measure at zero or degenerate to it at an appropriate rate. This rate would ensure the aggregated signal of inactive covariates with \textit{a priori} regression coefficients to be negligible
$$\max_{\xi \supseteq \xi_\star} \frac{\Vert \mbf{X}_{\xi^c} \bm{\beta}_{\xi^c} \Vert}{\sqrt{n}} \prec \sqrt{\frac{\log p}{n}} \le \epsilon_n.$$

Assumption \ref{asm1}(d) avoids excessive thinness of the slab distribution around the true regression coefficients, which would otherwise cause the slab prior to miss true non-zero regression coefficients. In case of the Laplace slab distribution $h_1(z) = (\rho_n/2)\exp(-\rho_n|z|)$, the choice of the inverse-scale hyper-parameter $p^{-A_3'} \preceq \rho_n \preceq A_3'' \log p / z_{1n}$ fulfills Assumption \ref{asm1}(d) with $A_3 = A_3' + A_3''$. In case of the Gaussian slab distribution $h_1(z) = (2\pi\tau_{1n}^2)^{-1/2}\exp[-z^2/(2\tau_{1n}^2)]$, the choice of the variance hyper-parameter $A_3' z_{1n}^2 / \log p \preceq \tau_{1n}^2 \preceq p^{A_3''}$ fulfills Assumption \ref{asm1}(d) with $A_3 = 1/A_3' + A_3''/2$.


\section{Posterior Contraction} \label{sec2.5}

It is impossible to estimate the coefficients $\bm{\beta}_\star$ in the high-dimensional linear regression model \eqref{model} without
conditions on the Gram matrix $\mbf{X}^\T\mbf{X}$. Indeed, the Gram matrix is non-invertible in the high-dimensional regime, rendering an unidentifiability issue. A common remedy is to assume some kind of ``local invertibility'' of the Gram matrix. We formalize this idea in the following definition and assumption.
\begin{definition}[Minimum United Eigenvalue (\texttt{MUEV})] \label{def1}
The minimum united eigenvalue of order $t$ for the design matrix $\mbf{X}$ is defined as
$$\texttt{MUEV}(t) := \min_{\xi \in \mc{F}:~|\xi \cup \xi_\star| \le t} \lambda_{\min}(\mbf{X}_{\xi \cup \xi_\star}^\T\mbf{X}_{\xi \cup \xi_\star}/n),$$
where $\mc{F}$ is the set of all full-rank models $\xi$.
\end{definition}

\begin{assumption}[\texttt{MUEV} Condition]\label{asm2}
There exists constant $K>0$ such that
$$\lambda = \lambda(K) := \texttt{MUEV}((K+1)s) > 0.$$
\end{assumption}

We also collect other local eigenvalues used in the literature. 
\begin{definition}[Minimum Restricted Eigenvalue (\texttt{MREV})] \label{def2}
The minimum restricted eigenvalue of order $t$ (with parameter $\alpha \ge 1$) is defined as
$$\texttt{MREV}(t) := \min_{\xi:~|\xi| \le t} \inf_{\bm{\beta}}\left\{\frac{\bm{\beta}^\T\mbf{X}^\T\mbf{X}\bm{\beta}}{n\Vert \bm{\beta}\Vert^2}: \bm{\beta} \ne \mbf{0}, \Vert \bm{\beta}_{\xi^c} \Vert_1 \le \alpha \Vert \bm{\beta}_{\xi}\Vert_1\right\}.$$
\end{definition}

\begin{definition}[Minimum Sparse Eigenvalue (\texttt{MSEV})] \label{def3}
The minimum sparse eigenvalue of order $t$ is defined as
$$\texttt{MSEV}(t) := \min_{\xi:~|\xi| \le t} \lambda_{\min} (\mbf{X}_{\xi}^\T\mbf{X}_{\xi}/n).$$
\end{definition}

\begin{definition}[Minimum Non-zero Eigenvalue (\texttt{MNEV})] \label{def4}
The minimum non-zero eigenvalue of order $t$ is defined as
$$\texttt{MNEV}(t) := \min_{\xi:~|\xi| \le t} \lambda_{\min}^\texttt{N} (\mbf{X}_{\xi}^\T\mbf{X}_{\xi}/n),$$
where $\lambda_{\min}^\texttt{N}(\mbf{A})$ denotes the minimum non-zero eigenvalue of a symmetric matrix $\mbf{A}$.
\end{definition}

The next lemma discusses the relation of \texttt{MUEV} to other local eigenvalues of the Gram matrix. It states that the \texttt{MUEV} condition is the weakest among other possible conditions defined upon \texttt{MSEV}, \texttt{MREV} and \texttt{MNEV}. Note that the premise for \texttt{MUEV} $\ge$ \texttt{MNEV} in the lemma was assumed in the original paper of \texttt{MNEV}; see \citep[Condition 4.5]{narisetty2014bayesian}.

\begin{lemma} \label{lem:eigenvalues}
For any $t > s$,
$$\texttt{MUEV}(t) \ge \texttt{MSEV}(t) \ge \texttt{MREV}(t).$$
If $(\mbf{I} - \mbf{P}_\xi)\mbf{X}_{\xi_\star} \ne \mbf{0}$ for any $\xi \not \supseteq \xi_\star$ of size $|\xi| \le t$, then
$$\texttt{MUEV}(t) \ge \texttt{MNEV}(t).$$
\end{lemma}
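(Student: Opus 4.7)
The plan is to handle the three inequalities separately. The first two follow by direct comparison of the minimization domains, while the last one is the main technical challenge and rests on an iterative rank-extension argument using the premise.

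For $\texttt{MUEV}(t) \ge \texttt{MSEV}(t)$, fix any $\xi \in \mc{F}$ with $|\xi \cup \xi_\star| \le t$ and set $\zeta := \xi \cup \xi_\star$. Since $|\zeta| \le t$, the set $\zeta$ is admissible in the $\texttt{MSEV}(t)$ minimization, so $\lambda_{\min}(\mbf{X}_\zeta^\T \mbf{X}_\zeta/n) \ge \texttt{MSEV}(t)$; taking the minimum over admissible $\xi$ yields the claim. For $\texttt{MSEV}(t) \ge \texttt{MREV}(t)$, I restrict the MREV infimum to vectors $\bm{\beta}$ supported on $\xi$, i.e.\ with $\bm{\beta}_{\xi^c} = \mbf{0}$; these satisfy the cone constraint $\|\bm{\beta}_{\xi^c}\|_1 \le \alpha \|\bm{\beta}_\xi\|_1$ for any $\alpha \ge 1$, and on this restricted class the Rayleigh quotient reduces to $\bm{\beta}_\xi^\T \mbf{X}_\xi^\T \mbf{X}_\xi \bm{\beta}_\xi /(n \|\bm{\beta}_\xi\|^2)$, whose infimum over nonzero $\bm{\beta}_\xi$ is exactly $\lambda_{\min}(\mbf{X}_\xi^\T\mbf{X}_\xi/n)$.

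The interesting inequality is $\texttt{MUEV}(t) \ge \texttt{MNEV}(t)$. The strategy is to show that, under the premise, the union $\zeta = \xi \cup \xi_\star$ itself lies in $\mc{F}$ whenever $\xi \in \mc{F}$ and $|\zeta| \le t$; once this holds, $\lambda_{\min}(\mbf{X}_\zeta^\T\mbf{X}_\zeta/n)$ is strictly positive and hence equals $\lambda_{\min}^{\texttt{N}}(\mbf{X}_\zeta^\T\mbf{X}_\zeta/n) \ge \texttt{MNEV}(t)$. To prove $\zeta \in \mc{F}$, I proceed by iteratively enlarging $\xi_0 := \xi$: while $\xi_k \not\supseteq \xi_\star$, I apply the premise to $\xi_k$ (which still satisfies $|\xi_k| \le |\zeta|-1 \le t - 1$) to obtain some $j_{k+1} \in \xi_\star$ with $(\mbf{I} - \mbf{P}_{\xi_k})\mbf{X}_{j_{k+1}} \ne \mbf{0}$; necessarily $j_{k+1} \in \xi_\star \setminus \xi_k$, so $\mbf{X}_{j_{k+1}}$ lies outside the column span of $\mbf{X}_{\xi_k}$, and $\xi_{k+1} := \xi_k \cup \{j_{k+1}\}$ remains in $\mc{F}$. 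After exactly $|\xi_\star \setminus \xi|$ steps the process terminates with $\xi_k = \zeta \in \mc{F}$.

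The main obstacle is that the premise $(\mbf{I} - \mbf{P}_\xi)\mbf{X}_{\xi_\star} \ne \mbf{0}$ only guarantees that at least one column of $\mbf{X}_{\xi_\star}$ lies outside the column span of $\mbf{X}_\xi$, not that every such column does, nor that $(\mbf{I} - \mbf{P}_\xi)\mbf{X}_{\xi_\star \setminus \xi}$ has full column rank. The iterative scheme above sidesteps this weakness by reapplying the premise to each freshly enlarged set; the bookkeeping on sizes $|\xi_k| \le t$ together with $\xi_k \not\supseteq \xi_\star$ throughout the loop ensures the premise is always legitimately available until the full union has been constructed.
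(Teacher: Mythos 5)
Your proof is correct. The first two inequalities are handled exactly as in the paper: $\xi\cup\xi_\star$ is admissible in the \texttt{MSEV} minimization, and vectors supported on $\xi$ form a subset of the \texttt{MREV} cone. For the third inequality you also follow the paper's reduction — show that the premise forces $\xi\cup\xi_\star\in\mc{F}$, so that its smallest eigenvalue is its smallest non-zero eigenvalue — but you execute that step differently. The paper argues by contradiction: assuming $\xi\cup\xi_\star\not\in\mc{F}$, it extracts a basis of the column space of $\mbf{X}_{\xi\cup\xi_\star}$ consisting of all of $\xi\setminus\xi_\star$ plus a proper subset $\gamma\subsetneq\xi_\star$, and observes that $(\xi\setminus\xi_\star)\uplus\gamma$ then violates the identifiability premise in a single application. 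You instead build $\xi\cup\xi_\star$ constructively, reapplying the premise at each stage to find a column of $\mbf{X}_{\xi_\star}$ outside the current span; the termination and size bookkeeping ($|\xi_k|\le|\xi\cup\xi_\star|-1\le t-1$ while the loop is active, and the chosen index necessarily lies in $\xi_\star\setminus\xi_k$) are all handled correctly. Your version is somewhat more transparent — it avoids verifying that the basis-completion set has admissible size and misses $\xi_\star$, and as a bonus it re-derives full-rankness of $\xi_\star$ rather than assuming it — at the cost of invoking the premise up to $|\xi_\star\setminus\xi|$ times rather than once. Both arguments are sound.
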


Now we are ready to present our main results regarding the posterior contraction rate of $\bm{\beta}$ in terms of $\ell_2$-norm.

\begin{theorem}[Posterior Contraction] \label{thm1}
Suppose Assumption \ref{asm1} and Assumption \ref{asm2} hold with $A_1 + A_3 + 1 < A_2 K$. For any constants $M_1,M_2 > \sqrt{8\max\{A_2,1\}K}$, the posterior distribution $\Pi(\sigma, \xi, \bm{\beta}|\mbf{X},\mbf{Y})$ concentrates on the subset of the parameter space
\begin{equation*}
\widehat{\Theta} = \left\{(\sigma, \xi, \bm{\beta}):
\begin{split}
& \frac{\sigma^2}{\sigma_\star^2} \in \left[\frac{1-M_1\epsilon_n}{1+M_1\epsilon_n}, \frac{1+M_1\epsilon_n}{1-M_1\epsilon_n}\right],\\
&|\xi \setminus \xi_\star| \le Ks,~\xi \in \mathcal{F},\\
&\max_{j \not \in \xi} |\beta_j| \le \sigma z_{0n},\\
& \Vert \bm{\beta} - \bm{\beta}^\star\Vert \le M_2\sigma_\star\epsilon_n/\sqrt{\lambda}\\
\end{split}
\right\}
\end{equation*}
in the sense that, with some constants $C_1, C_2$,
$$\Pn \left(\Pi(\widehat{\Theta}|\mbf{X},\mbf{Y}) \ge 1 - e^{-C_1s\log p}\right) \gtrsim 1-e^{-C_2s\log p}.$$
\end{theorem}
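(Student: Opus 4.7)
The plan is to use the standard evidence-ratio strategy. Write the posterior as
$$\Pi(E \mid \mbf{X}, \mbf{Y}) = \frac{\int_E L(\sigma,\xi,\bm{\beta};\mbf{Y})\,d\Pi(\sigma,\xi,\bm{\beta})}{\int L(\sigma,\xi,\bm{\beta};\mbf{Y})\,d\Pi(\sigma,\xi,\bm{\beta})},$$
with $L$ the Gaussian likelihood, and decompose $\widehat{\Theta}^c$ into four overlapping pieces $E_1,\dots,E_4$ corresponding to violations of the four constraints defining $\widehat\Theta$: (i) $E_1 = \{\xi \notin \mc{F}\} \cup \{|\xi \setminus \xi_\star| > Ks\}$; (ii) $E_2 = \{\max_{j\notin\xi}|\beta_j| > \sigma z_{0n}\}$; (iii) $E_3 = \{\|\bm{\beta}-\bm{\beta}^\star\| > M_2\sigma_\star\epsilon_n/\sqrt\lambda\}$; (iv) $E_4$ is the complement of the variance window. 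I would bound $\Pi(E_i \mid \mbf{X},\mbf{Y})$ by $e^{-C s\log p}$ with $\Pn$-probability at least $1-e^{-C' s\log p}$ for each $i$, and union over $i$.

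The first step is the \emph{evidence lower bound}. Restrict the prior to the local neighbourhood $N_\star = \{\xi = \xi_\star\} \times \{\|\bm{\beta}_{\xi_\star}/\sigma - \bm{\beta}^\star_{\xi_\star}/\sigma_\star\|_\infty \le c\epsilon_n/\sqrt{s}\} \times \{\max_{j\notin\xi_\star}|\beta_j| \le \sigma z_{0n}\} \times \{|\sigma-\sigma_\star| \le c\sigma_\star\epsilon_n\}$. By Assumption~\ref{asm1}(b), $\pi(\xi_\star) \ge p^{-A_1 s}$; by Assumption~\ref{asm1}(d), the slab density is $\succeq p^{-A_3}$ pointwise in the $\bm{\beta}_{\xi_\star}$-box, contributing $p^{-A_3 s}$ across $s$ coordinates; by Assumption~\ref{asm1}(c), the spike puts all but $e^{-n}$ mass in the $z_{0n}$ box; and Assumption~\ref{asm1}(a) contributes an $O(1)$ factor for $\sigma$. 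On $N_\star$, standard Gaussian concentration of $\|\bm{\varepsilon}\|^2$ and boundedness of $\|\mbf{X}(\bm{\beta}-\bm{\beta}^\star)\|^2/n$ yield $L(\sigma,\xi_\star,\bm{\beta};\mbf{Y}) \gtrsim L_\star \cdot e^{-C s\log p}$ on an event of $\Pn$-probability $\ge 1-e^{-C'' s\log p}$, where $L_\star$ is the likelihood at the truth. The evidence is therefore at least $L_\star \cdot p^{-(A_1+A_3+c)s}$, up to harmless polynomial corrections.

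The numerator bounds go case by case. For $E_2$, the spike tail definition gives $\Pi(E_2) \le p\cdot e^{-n}$, which already dominates any inverse-evidence factor of order $e^{O(s\log p)}$ since $s\log p \ll n$ (implicit in $\epsilon_n \to 0$). For $E_4$, condition on $E_1^c$, use the explicit formula for the integrated likelihood on each $\xi$ of size $\le (K+1)s$, and apply $\chi^2$ concentration of $\|(\mbf{I}-\mbf{P}_\xi)\mbf{Y}\|^2/\sigma_\star^2$. For $E_3$, on $E_1^c \cap E_2^c$ the active-plus-true support $\xi\cup\xi_\star$ has size $\le (K+1)s$ and lies in $\mc{F}$, so Assumption~\ref{asm2} forces $\|\mbf{X}(\bm{\beta}-\bm{\beta}^\star)\|^2/n \ge \lambda\|\bm{\beta}_{\xi\cup\xi_\star}-\bm{\beta}^\star_{\xi\cup\xi_\star}\|^2 - o(1)\cdot\sigma_\star^2\epsilon_n^2$, the error term coming from the spike coordinates; combined with a posterior bound on prediction error $\|\mbf{X}(\bm{\beta}-\bm{\beta}^\star)\|^2/n \preceq \sigma_\star^2\epsilon_n^2$, this yields the stated $\ell_2$ radius.

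The main obstacle is $E_1$, namely killing the overfitted models. Here I would sum the Bayes factor $B_\xi = [\pi(\xi)/\pi(\xi_\star)]\cdot\int L\,d\Pi_\xi/\int L\,d\Pi_{\xi_\star}$ over $\xi$ with $|\xi\setminus\xi_\star| > Ks$. The prior ratio is bounded using Assumption~\ref{asm1}(b) as $\sum_{|\xi|>Ks}\pi(\xi)/\pi(\xi_\star) \le p^{(A_1-A_2 K)s}$. The integrated-likelihood ratio, after Laplace/Gaussian marginalization of the slab over $\bm{\beta}_\xi$, is at most $p^{A_3|\xi\setminus\xi_\star|}\cdot\exp(\tfrac{1}{2\sigma_\star^2}[\|(\mbf{I}-\mbf{P}_{\xi_\star})\mbf{Y}\|^2-\|(\mbf{I}-\mbf{P}_\xi)\mbf{Y}\|^2])$, and the quadratic-form gain on each fixed $\xi$ is controlled by a $\chi^2_{|\xi\setminus\xi_\star|}$ tail bound of order $e^{4\max\{A_2,1\}K s\log p}$ uniformly over the at-most-$\binom{p}{Ks}\le e^{Ks\log p}$ overfitted models. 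Putting these together, the cumulative Bayes factor is $\le p^{(A_1+A_3+1-A_2K)s}$, which shrinks exponentially in $s\log p$ precisely because the theorem assumes $A_1+A_3+1 < A_2 K$. The tuning $M_1, M_2 > \sqrt{8\max\{A_2,1\}K}$ will enter to make the $\chi^2$-tail union bound go through. The hardest step is controlling this union bound uniformly in $\xi$ while keeping the exponent sharp enough for the $A_1+A_3+1 < A_2 K$ threshold to suffice; I would isolate it as a deterministic lemma on Gaussian quadratic forms over the family of projectors $\{\mbf{P}_\xi - \mbf{P}_{\xi_\star\cup\xi}\}$.
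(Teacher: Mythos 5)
Your overall architecture (an evidence lower bound over a local neighbourhood of the truth, plus case-by-case bounds on the posterior numerator) matches the paper's use of Lemma \ref{lem:barron}; your $N_\star$ computation is essentially Lemma \ref{lem:merging}, and your treatment of $E_2$ is Lemma \ref{lem:prior}. The genuine gap is in $E_1$. You propose to control $\{|\xi\setminus\xi_\star|>Ks\}$ by summing Bayes factors and bounding the integrated-likelihood gain by a $\chi^2$ tail ``of order $e^{4\max\{A_2,1\}Ks\log p}$ uniformly over the at-most-$\binom{p}{Ks}$ overfitted models.'' This fails on two counts. First, $E_1$ contains \emph{all} models of size exceeding $Ks$, not models of size exactly $Ks$; their number is vastly larger than $\binom{p}{Ks}$. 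Second, the likelihood gain $\|(\mbf{P}_{\xi\cup\xi_\star}-\mbf{P}_{\xi_\star})\mbf{Y}\|^2$ for a model of size $t$ scales like $t\log p$ once you union over the $\binom{p}{t}$ models of that size, so no threshold of order $s\log p$ can hold uniformly. If you push the per-size Bayes-factor argument through, the requirement becomes roughly $A_2 > A_3+1+\eta$ \emph{per covariate} for every $t>Ks$ — a strictly stronger hypothesis than the stated $A_1+A_3+1<A_2K$, which permits small $A_2$ compensated by large $K$. The correct route, and the one the paper takes, is to place this set in $\Theta_0$ and use only its \emph{prior} mass: by Fubini, $\En\int_{E_1}(P_\theta/P_{\theta_\star})\,d\Pi=\Pi(E_1)\le p^{-[A_2K-o(1)]s}$, so Markov's inequality bounds the numerator with no control of the likelihood on $E_1$ whatsoever; dividing by the evidence lower bound $p^{-(A_1+A_3+1-\eta)s}$ then uses the hypothesis $A_1+A_3+1<A_2K$ exactly as stated.

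A secondary issue: your argument for $E_3$ invokes ``a posterior bound on prediction error $\|\mbf{X}(\bm{\beta}-\bm{\beta}^\star)\|^2/n\preceq\sigma_\star^2\epsilon_n^2$,'' but establishing that posterior bound is the substantive content of this step, not an available input — as written the argument is circular. The paper instead constructs an explicit test $\phi_2$ from the least-squares estimators $\mbf{X}_{\xi\cup\xi_\star}^\dagger\mbf{Y}$ over all full-rank $\xi$ with $|\xi\setminus\xi_\star|\le Ks$, bounds its type-I error by a union of $\chi^2_{(K+1)s}$ tails over $\binom{p}{Ks}$ models (this is where $M_2>\sqrt{8\max\{A_2,1\}K}$ enters), and bounds its type-II error uniformly over $\Theta_2$ using Assumption \ref{asm2} (Lemma \ref{lem:test2}). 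You would need to supply an equivalent two-sided test or an equivalent direct computation for this case before the proof is complete.
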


This theorem establishes the posterior contraction rate of regression coefficients $\bm{\beta}$ in terms of $\ell_2$-norm. Roughly speaking, the posterior distribution puts almost all mass in an $\epsilon_n$-ball centering at true coefficients $\bm{\beta}^\star$, with high probability. The posterior contraction rate is a standard metric to evaluate estimation accuracy of Bayesian approaches \citep{ghosal2000convergence, shen2001rates}.

Two appealing byproducts of Theorem \ref{thm1} are the adaptivity of the posterior distribution to the unknown variance $\sigma_\star^2$, and the unknown sparsity level $s$. The working variance $\sigma^2$ accurately estimates $\sigma_\star^2$ up to a relative error of order $\epsilon_n$. The working model $\xi$ does not overshoot the true model size $s = |\xi_\star|$ by more than a constant factor $K$.

Additionally, Theorem \ref{thm1} allows $\lambda = \lambda(K)$ in Assumption \ref{asm2} to decrease to zero as $n \to \infty$, providing broader applicabilities. In this case, Theorem \ref{thm1} gives the posterior contraction rate $\epsilon_n/\sqrt{\lambda}$. When $\lambda$ is of constant order, this rate is near optimal. 

\section{Model Selection} \label{sec3}

The task of consistent model selection is harder than the task of parameter estimation, and thus requires more assumptions. Indeed, if some coordinates of $\bm{\beta}^\star$ are too close to zero, then no method can detect these nearly-zero coordinates as being non-zero. In such case, the posterior distribution may select only a subset of the true model and possibly other coordinates (with nearly-zero coefficients). At the same time, the parameter estimation could be still accurate due to the negligible coefficient values of these false discoveries. To avoid these extreme cases that cause the model selection task to fail, we need some kind of ``beta-min'' condition \citep{buhlmann2011statistics} on the minimal value of the true coefficients $\bm{\beta}^\star$.

\begin{assumption}[Beta-min Condition] \label{asm3}
These exists a constant $M_3 > 0$ such that 
$$\min_{j \in \xi_\star} |\beta^\star_j| \ge \frac{M_3 \sigma_\star \epsilon_n}{\sqrt{\lambda}}.$$
\end{assumption}

Under this assumption, the posterior distribution would select all active covariates. This insight is formalized in the following theorem.

\begin{theorem}[Overfitted Model Selection] \label{thm2}
If assumptions in Theorem \ref{thm1} as well as Assumption \ref{asm3} with $M_3 > \sqrt{8A_3K}$ hold, then the posterior distribution $\Pi(\sigma, \xi, \bm{\beta}|\mbf{X},\mbf{Y})$ concentrates on the subset of the parameter space
\begin{equation*}
\widetilde{\Theta} = \left\{(\sigma, \xi, \bm{\beta}):
\begin{split}
& \frac{\sigma^2}{\sigma_\star^2} \in \left[\frac{1-M_1\epsilon_n}{1+M_1\epsilon_n}, \frac{1+M_1\epsilon_n}{1-M_1\epsilon_n}\right],\\
&|\xi \setminus \xi_\star| \le Ks,~\xi \in \mathcal{F},~\xi \supseteq \xi_\star,\\
&\max_{j \not \in \xi} |\beta_j| \le \sigma z_{0n},\\
& \Vert \bm{\beta}_\xi - \bm{\beta}^\star_\xi \Vert \le M_2\sigma_\star\epsilon_n/\sqrt{\lambda}\\
\end{split}
\right\}.
\end{equation*}
in the sense that, with some constants $C_3, C_4$,
$$\Pn \left(\Pi(\widetilde{\Theta}|\mbf{X},\mbf{Y}) \ge 1 - e^{-C_3s\log p}\right) \gtrsim 1-e^{-C_4s\log p}.$$
\end{theorem}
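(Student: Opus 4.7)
The plan is to bootstrap from Theorem \ref{thm1}. Since $\widetilde{\Theta} = \widehat{\Theta} \cap \{\xi \supseteq \xi_\star\}$, Theorem \ref{thm1} already supplies $\Pi(\widehat{\Theta}^c \mid \mbf{X}, \mbf{Y}) \le e^{-C_1 s \log p}$ on a high-$\Pn$-probability event, so it is enough to show
\[
\Pi\bigl(\widehat{\Theta} \cap \{\xi \not\supseteq \xi_\star\} \bigm| \mbf{X}, \mbf{Y}\bigr) \le e^{-C_3' s \log p}
\]
on a similar event. I would expand this posterior mass as a union over the bad models $\mathcal{B} = \{\xi \in \mathcal{F}:\, |\xi \setminus \xi_\star| \le Ks,\; \xi \not\supseteq \xi_\star\}$, stratified by $k := |\xi_\star \setminus \xi| \in \{1, \ldots, s\}$; the cardinality of $\mathcal{B}$ is at most $\exp(c s \log p)$ by a standard counting argument.

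For each $\xi \in \mathcal{B}$, I would compare the posterior mass of $\xi$ to that of the enlarged model $\bar{\xi} := \xi \cup \xi_\star$. Since $|\bar{\xi}| \le (K+1)s$, Assumption \ref{asm2} guarantees $\bar{\xi} \in \mathcal{F}$ and $\lambda_{\min}(\mbf{X}_{\bar{\xi}}^\T \mbf{X}_{\bar{\xi}}/n) \ge \lambda$. Writing the posterior ratio as $\pi(\xi)\, m_\xi(\mbf{Y})\,/\,[\pi(\bar{\xi})\, m_{\bar{\xi}}(\mbf{Y})]$, where $m_\xi$ is the marginal likelihood after integrating out $(\bm{\beta},\sigma)$, the prior ratio is controlled by Assumption \ref{asm1}(b), and each of the $k$ slab densities appearing in $m_{\bar{\xi}}$ but not in $m_\xi$ is bounded below by $p^{-A_3}$ via Assumption \ref{asm1}(d).

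The crux is the marginal-likelihood bound. By a Schur-complement identity applied to $\mbf{X}_{\bar{\xi}}^\T \mbf{X}_{\bar{\xi}}/n$, Assumption \ref{asm2} implies $\lambda_{\min}\bigl(\mbf{X}_{\xi_\star \setminus \xi}^\T (\mbf{I} - \mbf{P}_\xi) \mbf{X}_{\xi_\star \setminus \xi}/n\bigr) \ge \lambda$. Combined with Assumption \ref{asm3}, this yields
\[
\Vert (\mbf{I} - \mbf{P}_\xi) \mbf{X}\bm{\beta}^\star \Vert^2 \;\ge\; n\lambda\, \Vert \bm{\beta}^\star_{\xi_\star \setminus \xi} \Vert^2 \;\ge\; k M_3^2 \sigma_\star^2 s \log p,
\]
quantifying the signal $\xi$ must miss. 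Under $\Pn$, a chi-squared concentration inequality then gives $\Vert (\mbf{I} - \mbf{P}_\xi)\mbf{Y} \Vert^2 - \Vert (\mbf{I} - \mbf{P}_{\bar{\xi}})\mbf{Y} \Vert^2 \ge (1 - o(1))\, k M_3^2 \sigma_\star^2 s \log p$ with overwhelming probability. A Laplace-type integration of the Gaussian likelihood against the slab and variance priors on both sides then transfers this into $m_\xi / m_{\bar{\xi}} \le \exp(-c\, k M_3^2 s \log p)$ up to polynomial prefactors.

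The main obstacle I anticipate is the precise bookkeeping required to balance this signal gain against (i) the slab-density penalty that contributes at most $p^{A_3 k}$ per bad model from Assumption \ref{asm1}(d), (ii) the prior ratio from Assumption \ref{asm1}(b), and (iii) the combinatorial factor $\exp(c s \log p)$ from the union bound over $\mathcal{B}$. The threshold $M_3 > \sqrt{8 A_3 K}$ is precisely tuned so that the signal exponent dominates the slab penalty uniformly in $k \ge 1$, at which point a geometric-series summation over $\xi \in \mathcal{B}$ delivers the claimed bound $e^{-C_3 s \log p}$. The detailed computation will reuse the Laplace estimates from the proof of Theorem \ref{thm1}, with the newly-introduced term $\Vert (\mbf{I} - \mbf{P}_\xi) \mbf{X}\bm{\beta}^\star \Vert^2$ as the key ingredient absent when $\xi \supseteq \xi_\star$.
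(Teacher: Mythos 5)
Your architecture is genuinely different from the paper's. The paper does not compare marginal likelihoods of competing models at all for this theorem: it re-runs the Barron-type testing machinery of Lemma \ref{lem:barron}, keeping $\Theta_0,\Theta_1,\Theta_2,\phi_1,\phi_2$ from Theorem \ref{thm1} and adding one new pair $(\Theta_3,\phi_3)$ with $\phi_3$ built from $\min_{\xi\not\supseteq\xi_\star}\Vert(\mbf{P}_{\xi\cup\xi_\star}-\mbf{P}_\xi)\mbf{Y}\Vert$; the whole content is then the type-I/type-II error bound of Lemma \ref{lem:test3}. Your central quantitative ingredient does coincide with the paper's: the Schur-complement bound $\Vert(\mbf{I}-\mbf{P}_\xi)\mbf{X}_{\xi_\star}\bm{\beta}^\star_{\xi_\star}\Vert^2\ge n\lambda\Vert\bm{\beta}^\star_{\xi_\star\setminus\xi}\Vert^2\ge |\xi_\star\setminus\xi|\,M_3^2\sigma_\star^2 n\epsilon_n^2$ is exactly the first display in the proof of Lemma \ref{lem:test3}(a). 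The testing route buys robustness (it never needs to lower-bound the posterior mass of any competitor model other than through the single denominator bound of Lemma \ref{lem:merging}), while your Bayes-factor route, if completed, would be closer to the mechanism the paper reserves for Theorem \ref{thm3}.

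The genuine gap is your claim that ``the prior ratio is controlled by Assumption \ref{asm1}(b).'' It is not. Assumption \ref{asm1}(b) only gives a lower bound on $\pi(\xi_\star)$ and an upper bound on tail sums $\sum_{|\xi|>t}\pi(\xi)$; it gives no control whatsoever on $\pi(\xi)/\pi(\xi\cup\xi_\star)$ for a bad model $\xi$ and its enlargement $\bar\xi=\xi\cup\xi_\star$. A prior satisfying Assumption \ref{asm1}(b) may put zero mass on $\bar\xi$ while putting positive mass on $\xi$ (nothing forces the prior to charge any strict superset of $\xi_\star$), in which case your comparison has a vanishing denominator. The natural repair is to compare every bad $\xi$ directly to $\xi_\star$, where $\pi(\xi)/\pi(\xi_\star)\le p^{A_1 s}$ is available; but this changes the bookkeeping, since the marginal-likelihood ratio $m_\xi/m_{\xi_\star}$ must then absorb both the $p^{A_1 s}$ prior penalty and the normalization/volume factors of the up to $Ks$ extra slab coordinates of $\xi$ (factors which Theorem \ref{thm2}, unlike Theorem \ref{thm3}, does not assume condition (b) to control), and it is no longer clear that the stated threshold on $M_3$ suffices. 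Either switch to the paper's testing formulation, where the missed-signal bound you already have plugs directly into a test with exponentially small errors and no inter-model prior ratios are needed, or carry out the $\xi$-versus-$\xi_\star$ comparison with the volume factors bounded via $\lambda_{\max}(\mbf{X}_{\bar\xi}^\T\mbf{X}_{\bar\xi}/n)\le(K+1)s$ and accept a possibly larger constant in the beta-min condition.
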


However, Theorem \ref{thm2} does not guarantee the elimination of false discoveries in the overfitted models. To achieve the true sparse model, we need to bound
$$\underbrace{\frac{\Pi(\widetilde{\Theta} \cap\{\xi = \gamma\}|\mbf{X},\mbf{Y})}{\Pi(\widetilde{\Theta} \cap\{\xi = \xi_\star\}|\mbf{X},\mbf{Y})}}_\text{posterior ratio} =  \underbrace{\frac{\Pi(\widetilde{\Theta}|\mbf{X},\mbf{Y},\xi=\gamma)}{\Pi(\widetilde{\Theta}|\mbf{X},\mbf{Y},\xi=\xi_\star)}}_\text{conditional posterior ratio} \times \underbrace{\frac{\pi(\gamma)}{\pi(\xi_\star)}}_\text{prior ratio}$$
for each overfitted model $\gamma \supset \xi_\star$ with $|\gamma| \le (K+1)s$. The two terms on the right-hand side of the last display are the conditional posterior and prior ratios between models $\gamma$ and $\xi_\star$ respectively. Given some continuity condition of the slab distribution $h_1$, the posterior ratio can be bounded as
$$\text{conditional posterior ratio} \le 2\left(\frac{\sqrt{2\pi}\sup_{z} h_1(z) p^{1+o(1)}}{\sqrt{n\lambda}}\right)^{|\gamma|-s}.$$
For many diffusing slab distributions that have diminishing maximum values $\sup_z h_1(z) \to 0$, the conditional posterior ratio is upper bounded by the diffusing rate of $h_1(z)$ after proper normalization. 
The prior ratio is solely determined by the model selection prior $\pi(\xi)$. Combining these pieces together yields the following theorem for model selection consistency of spike-and-slab methods.

\begin{theorem}[Consistent Model Selection] \label{thm3}
Suppose assumptions in Theorem \ref{thm2} and the following two conditions hold.
\begin{enumerate}[label=(\alph*)]
\item The log-slab function $\log h_1(z)$ is $L_n$-Lipschitz continuous on $[-z_{1n},+z_{1n}]$ with $s L_n \prec \sqrt{n \log p}$.
\item With some small constant $\eta > 0$,
$$r_n := \max_{j=1}^{Ks}\left[\frac{\pi(|\xi|=s+j)}{\pi(\xi_\star)}\right]^{1/j} \times \frac{\sup_{z} h_1(z)p^{1+\eta}}{\sqrt{n\lambda}} < 1.$$
\end{enumerate}
Then, with constants $C_5, C_6$,
$$\Pn \left(\Pi(\xi = \xi_\star|\mbf{X},\mbf{Y}) \ge 1 - e^{-C_5s\log p} - r_n\right) \gtrsim 1-p^{-C_6}.$$
Consequently, if $r_n \prec 1$ then $\Pi(\xi = \xi_\star|\mbf{X},\mbf{Y})$ converges to $1$ in expectation and in probability.
\end{theorem}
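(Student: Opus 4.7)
The plan is to reduce the claim to bounding a sum of posterior ratios over overfitted models. Because Theorem \ref{thm2} concentrates the posterior on $\widetilde{\Theta}$, on which $\xi \in \mc{F}$, $\xi \supseteq \xi_\star$ and $|\xi\setminus\xi_\star| \le Ks$, I set $\mc{G} = \{\gamma \in \mc{F}: \gamma \supsetneq \xi_\star,\, |\gamma|\le(K+1)s\}$ and start from
\[
1 - \Pi(\xi=\xi_\star|\mbf{X},\mbf{Y}) \;\le\; \Pi(\widetilde{\Theta}^c|\mbf{X},\mbf{Y}) + \sum_{\gamma\in\mc{G}} \Pi(\widetilde{\Theta}\cap\{\xi=\gamma\}|\mbf{X},\mbf{Y}).
\]
The first term is at most $e^{-C_3 s\log p}$ by Theorem \ref{thm2}. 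The second I handle by using the decomposition already displayed in the excerpt to write each summand as (conditional posterior ratio) $\times\ \pi(\gamma)/\pi(\xi_\star)$ times $\Pi(\widetilde{\Theta}\cap\{\xi=\xi_\star\}|\mbf{X},\mbf{Y})\le 1$.

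The main step is the Laplace-type bound on the conditional posterior ratio. Conditional on $\xi=\gamma$ and $\sigma$, the likelihood is Gaussian in $\bm{\beta}_\gamma$ centered at $\hat{\bm{\beta}}_\gamma = \mbf{X}_\gamma^\dagger\mbf{Y}$ with covariance $\sigma^2(\mbf{X}_\gamma^\T\mbf{X}_\gamma)^{-1}$. Assumption \ref{asm2} gives Gaussian width $\lesssim \sigma/\sqrt{n\lambda}$ per direction, which is much smaller than the $\widetilde{\Theta}$-radius $M_2\sigma_\star\epsilon_n/\sqrt{\lambda}$, so the integral restricted to $\widetilde{\Theta}$ matches the unrestricted Gaussian integral up to a tail of order $e^{-Cs\log p}$. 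Condition (a) lets me pull the slab out: on $\widetilde{\Theta}$ each $\beta_j/\sigma$ lies in $[-z_{1n},z_{1n}]$ and $\Vert\bm{\beta}_\gamma - \hat{\bm{\beta}}_\gamma\Vert_1 \lesssim |\gamma|\sigma/\sqrt{n\lambda}$, so that $\prod_{j\in\gamma} h_1(\beta_j/\sigma) = \prod_{j\in\gamma} h_1(\hat\beta_j/\sigma)\cdot e^{O(L_n|\gamma|/\sqrt{n\lambda})}$ with exponent $o(\log p)$ under $sL_n\prec\sqrt{n\log p}$. The spike integrals over $j\notin\gamma$ (resp.~$j\notin\xi_\star$) are bounded symmetrically via Assumption \ref{asm1}(c) and the $\widetilde{\Theta}$-constraint $\max_{j\notin\xi}|\beta_j|\le\sigma z_{0n}$, contributing only a $p^{o(1)}$ factor to the ratio.

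Taking the ratio, the residual exponential $\exp(\Vert(\mbf{P}_\gamma - \mbf{P}_{\xi_\star})\mbf{Y}\Vert^2/(2\sigma^2))$ reduces via $(\mbf{P}_\gamma - \mbf{P}_{\xi_\star})\mbf{X}_{\xi_\star}=\mbf{0}$ to a chi-square in $\bm{\varepsilon}$ of rank $|\gamma|-s$; a union bound over $\mc{G}$ using chi-square tails gives $\Vert(\mbf{P}_\gamma-\mbf{P}_{\xi_\star})\bm{\varepsilon}\Vert^2 \le C(|\gamma|-s)\log p$ uniformly in $\gamma$ on an event $E$ with $\Pn(E)\ge 1-p^{-C_6}$, contributing at most $p^{O(|\gamma|-s)}$. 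The determinant ratio follows from the Schur-complement identity
\[
\det(\mbf{X}_\gamma^\T\mbf{X}_\gamma) = \det(\mbf{X}_{\xi_\star}^\T\mbf{X}_{\xi_\star})\cdot\det\bigl(\mbf{X}_{\gamma\setminus\xi_\star}^\T(\mbf{I}-\mbf{P}_{\xi_\star})\mbf{X}_{\gamma\setminus\xi_\star}\bigr),
\]
whose second factor is at least $(n\lambda)^{|\gamma|-s}$ since the Schur complement of a PSD matrix inherits the smallest-eigenvalue lower bound $n\lambda$ from Assumption \ref{asm2}. Combined with the $\sigma$-volume $(2\pi\sigma^2)^{(|\gamma|-s)/2}$ and the slab bound $(\sup_z h_1(z))^{|\gamma|-s}$, this reproduces the displayed bound $2(\sqrt{2\pi}\sup_z h_1(z)\,p^{1+o(1)}/\sqrt{n\lambda})^{|\gamma|-s}$ on the conditional posterior ratio.

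Summing first over $\gamma\supseteq\xi_\star$ with $|\gamma|=s+j$ (using $\sum_\gamma \pi(\gamma)\le \pi(|\xi|=s+j)$) and then over $j=1,\dots,Ks$, with condition (b) absorbing $p^{1+o(1)}$ into $p^{1+\eta}$ for large $n$, yields the geometric bound $\sum_{j=1}^{Ks} 2 r_n^j \le 2r_n/(1-r_n) \lesssim r_n$; combined with $\Pi(\widetilde{\Theta}^c|\mbf{X},\mbf{Y})\le e^{-C_3 s\log p}$ this delivers the stated inequality, and $\Pi(\xi=\xi_\star|\mbf{X},\mbf{Y})\to 1$ in expectation and probability then follows from $r_n\prec 1$ and $s\log p\to\infty$. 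The hardest step is the Laplace-type ratio bound itself: the Gaussian marginalization, the $L_n$-Lipschitz replacement of the slab at $\hat{\bm{\beta}}_\gamma$, the spike tail for inactive coordinates, and the restriction to $\widetilde{\Theta}$ must all be controlled simultaneously so that the accumulated multiplicative errors across the up to $Ks$ extra coordinates collapse into the single $p^{1+o(1)}$ factor in the displayed bound.
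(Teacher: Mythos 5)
Your proposal is correct and follows essentially the same route as the paper: the same decomposition into $\Pi(\widetilde{\Theta}^c|\mbf{X},\mbf{Y})$ plus a sum of posterior ratios over overfitted models, the same Laplace-type bound on the conditional posterior ratio (Gaussian marginalization, Schur-complement determinant bound, Lipschitz replacement of the slab, spike-tail control, and a uniform chi-square event for the residual term, which is exactly the paper's Lemmas \ref{lem:omega} and \ref{lem:ratio}), and the same geometric summation against the prior ratio. No gaps.
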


The expression of rate $r_n$ precisely characterizes the roles of the model selection prior $\pi(\xi)$ and the diffusing slab distribution $h_1(z)$ in a successful Bayesian model selection procedure. Conditions of \citet{ narisetty2014bayesian, castillo2015bayesian} are special cases of our general conditions in Theorem \ref{thm3}.

\begin{example}[{\citealp{narisetty2014bayesian}}]\label{example:1}
For the Gaussian slab distribution with the variance parameter $\tau_{1n}^2$ and the i.i.d. Bernoulli prior of model selection (see Example \ref{example:bernoulli}),
$$L_n \asymp \tau_{1n}^{-2},~~~\max_{j=1}^{Ks}\left[\frac{\pi(|\xi|=s+j)}{\pi(\xi_\star)}\right]^{1/j} \le 1.$$
(as they assume fixed true coefficients $\bm{\beta}^\star$, and thus $z_{1n} \asymp 1$). In this setup, conditions (a) and (b) in Theorem \ref{thm3} turn to be
$$n \tau_{1n}^2 \lambda \prec p^{2+2\eta}.$$
This is the rate assumed by \citet[the first display of Section 2.1]{narisetty2014bayesian}.
\end{example}

\begin{example}[{\citealp{castillo2015bayesian}}]\label{example:2}
For the Laplace slab distribution with the inverse-scale parameter $\rho_n$ and the CSV prior of model selection (see Example \ref{example:csv}),
$$L_n = \rho_n,~~~\max_{j=1}^{Ks}\left[\frac{\pi(|\xi|=s+j)}{\pi(\xi_\star)}\right]^{1/j} \le (K+1)s \times B_3 p^{-B_4}.$$
In this setup, conditions (a) and (b) in Theorem \ref{thm3} are satisfied if
$$s \rho_n \sqrt{\log p/n} \prec 1,~~~s < p^\eta,~~~\rho_n \le 4\sqrt{n\log p}$$
for some $\eta < B_4 - 1$. These sufficient conditions have been used by \citet[Corollary 1]{castillo2015bayesian}.
\end{example}

\section{Proofs of Theorems} \label{sec4}
In this section, we sketch proofs of Theorem \ref{thm1}, Theorem \ref{thm2} and Theorem \ref{thm3}. Technical lemmas and their proofs are collected in the appendix. 
\subsection{Proof of Theorem \ref{thm1}}
The claimed inequality of Theorem \ref{thm1} is equivalent to
\begin{equation} \label{eqn:thm1}
\Pn \left(\Pi(\widehat{\Theta}^c|\mbf{X},\mbf{Y}) \ge e^{-C_1s\log p}\right) \lesssim e^{-C_2s\log p}.
\end{equation}
Our central technique to prove \eqref{eqn:thm1} is the following lemma, which is borrowed from \citet[Lemma 6]{barron1998information} and \citet[Lemma A4]{song2017nearly}.

\begin{lemma} \label{lem:barron}
Consider a parametric model $\{P_{\bm{\theta}}\}_{\bm{\theta} \in \Theta}$, and a data generation $\mbf{D}$ from the true parameter $\bm{\theta}_\star \in \Theta$. Let $\Pi(\bm{\theta})$ be a prior distribution over $\Theta$. If
\begin{enumerate}[label=(\alph*)]
\item $\Pi(\Theta_0) \le \delta_0$,
\item there exists a test function $\phi(\mbf{D})$ such that
$$\sup_{\bm{\theta} \in \Theta_\text{test}} \mbb{E}_{\bm{\theta}} [1 - \phi(\mbf{D})] \le \delta_1, \quad \mbb{E}_{\bm{\theta}_\star} [\phi(\mbf{D})] \le \delta_1',$$
\item and
$$\mbb{P}_{\bm{\theta}_\star}\left(\int_{\Theta} \frac{P_{\bm{\theta}}(\mbf{D})}{P_{\bm{\theta}_\star}(\mbf{D})}d\Pi(\bm{\theta}) \le \delta_2\right) \le \delta_2',$$
\end{enumerate}
then for any $\delta_3$,
$$\mbb{P}_{\bm{\theta}_\star}\left(\Pi(\Theta_0 \cup \Theta_\text{test} |\mbf{D}) \ge \frac{\delta_0+\delta_1}{\delta_2\delta_3}\right) \le \delta_1' + \delta_2' + \delta_3.$$
\end{lemma}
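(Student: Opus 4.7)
The plan is to write the posterior as the ratio
$$\Pi(\Theta_0 \cup \Theta_\text{test}\,|\,\mbf{D}) = \frac{N}{D}, \quad D := \int_\Theta L_{\bm{\theta}}(\mbf{D})\,d\Pi(\bm{\theta}), \quad N := \int_{\Theta_0 \cup \Theta_\text{test}} L_{\bm{\theta}}(\mbf{D})\,d\Pi(\bm{\theta}),$$
where $L_{\bm{\theta}}(\mbf{D}) := P_{\bm{\theta}}(\mbf{D})/P_{\bm{\theta}_\star}(\mbf{D})$ is the likelihood ratio. Hypothesis (c) already gives a lower tail bound on the denominator, $\mbb{P}_{\bm{\theta}_\star}(D \le \delta_2) \le \delta_2'$, so the real work is an upper tail bound on $N$. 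Since the two pieces $\Theta_0$ and $\Theta_\text{test}$ come with very different guarantees---one a prior-mass bound, the other a testing bound---I would handle them separately and then recombine via a union bound.

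First, I would split $N \le N_0 + N_\text{test}$ with $N_0 := \int_{\Theta_0} L_{\bm{\theta}}\,d\Pi$ and $N_\text{test} := \int_{\Theta_\text{test}} L_{\bm{\theta}}\,d\Pi$. For the easy piece, Tonelli and $\mbb{E}_{\bm{\theta}_\star}[L_{\bm{\theta}}]=1$ give $\mbb{E}_{\bm{\theta}_\star}[N_0] = \Pi(\Theta_0) \le \delta_0$ by (a). For the $\Theta_\text{test}$ piece I would introduce the test via a $(1-\phi)$ truncation: Tonelli together with the change of measure yields
$$\mbb{E}_{\bm{\theta}_\star}\!\bigl[(1-\phi)N_\text{test}\bigr] = \int_{\Theta_\text{test}} \mbb{E}_{\bm{\theta}_\star}\!\bigl[(1-\phi)L_{\bm{\theta}}\bigr] d\Pi(\bm{\theta}) = \int_{\Theta_\text{test}} \mbb{E}_{\bm{\theta}}[1-\phi]\,d\Pi(\bm{\theta}) \le \delta_1,$$
by (b). Setting $Z := N_0 + (1-\phi)N_\text{test}$, one has $\mbb{E}_{\bm{\theta}_\star}[Z]\le \delta_0+\delta_1$, and Markov's inequality delivers $\mbb{P}_{\bm{\theta}_\star}\!\bigl(Z \ge (\delta_0+\delta_1)/\delta_3\bigr) \le \delta_3$.

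The last step is a union bound over the three bad events $\{D \le \delta_2\}$, $\{\phi=1\}$, and $\{Z \ge (\delta_0+\delta_1)/\delta_3\}$, whose $\mbb{P}_{\bm{\theta}_\star}$-probabilities are at most $\delta_2'$, $\delta_1'$, and $\delta_3$ respectively (the middle one using $\mbb{E}_{\bm{\theta}_\star}[\phi]\le\delta_1'$ for a $\{0,1\}$-valued test). On the complementary event, $\phi=0$ collapses $(1-\phi)N_\text{test}$ to $N_\text{test}$, so $N \le Z < (\delta_0+\delta_1)/\delta_3$, while $D > \delta_2$; dividing yields $\Pi(\Theta_0 \cup \Theta_\text{test}\,|\,\mbf{D}) < (\delta_0+\delta_1)/(\delta_2\delta_3)$, as required.

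The only subtle point is the $(1-\phi)$ truncation: it is what lets a Type II error bound supplied under each $\mbb{P}_{\bm{\theta}}$, $\bm{\theta}\in\Theta_\text{test}$, be converted into a bound on $\mbb{E}_{\bm{\theta}_\star}[(1-\phi)N_\text{test}]$ via the likelihood-ratio change of measure, while the discarded region $\{\phi=1\}$ is paid for exactly once through the Type I bound $\delta_1'$. Beyond this device, the argument is just Tonelli, Markov, and bookkeeping, so I do not anticipate any further obstacle.
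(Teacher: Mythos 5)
Your proof is correct. The paper does not prove this lemma itself---it is quoted from \citet[Lemma 6]{barron1998information} and \citet[Lemma A4]{song2017nearly}---and your argument (split the posterior numerator over $\Theta_0$ and $\Theta_\text{test}$, bound $\mbb{E}_{\bm{\theta}_\star}[N_0]=\Pi(\Theta_0)$ by Tonelli, convert the Type II bound into $\mbb{E}_{\bm{\theta}_\star}[(1-\phi)N_\text{test}]\le\delta_1$ via the likelihood-ratio change of measure, then Markov plus a union bound with $\{D\le\delta_2\}$ and $\{\phi=1\}$) is exactly the standard proof of that result. The only caveat, which you already flag, is that your union-bound step $\mbb{P}_{\bm{\theta}_\star}(\phi=1)\le\delta_1'$ uses that $\phi$ is $\{0,1\}$-valued; this is harmless here since all tests in the paper are indicators, and for a general $[0,1]$-valued test one would instead bound $\mbb{E}_{\bm{\theta}_\star}[(1-\phi)\mathbf{1}\{N\ge c\delta_2\}]$ directly by Markov.
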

Specifically, in the linear regression model \eqref{model},
$$\mbf{D} = (\mbf{X}, \mbf{Y}), ~~~\bm{\theta} = (\sigma, \xi, \bm{\beta}),~~~P_{\bm{\theta}}(\mbf{D}) = \mc{N}(\mbf{Y}|\mbf{X}\bm{\beta}, \sigma^2\mbf{I}).$$
The high-level idea is that, to get the posterior concentration on a desired subset $\widehat{\Theta}$ of the parameter space, one can split the set of undesired parameter values $\widehat{\Theta}^c$ as two subsets $\Theta_0$ and $\Theta_\text{test}$, with parameter values in $\Theta_0$ received negligible \textit{a priori} probability mass and parameter values in $\Theta_\text{test}$ distinguished from $\bm{\theta}_\star$ by a uniformly powerful test $\phi$.

We first construct $\Theta_0$, $\Theta_\text{test}$ and $\phi$, which will be used to prove \eqref{eqn:thm1} in the framework described by Lemma \ref{lem:barron}. For $\Theta_0$ and $\Theta_\text{test}$, let 
\begin{align*}
\Theta_0 &= \{(\sigma,\xi,\bm{\beta}) \in \Theta: \xi \not \in \mc{F}\} \cup \{ (\sigma,\xi,\bm{\beta}) \in \Theta: |\xi \setminus \xi_\star| > Ks\}\\
&~~\cup \left\{(\sigma,\xi,\bm{\beta}) \in \Theta: \max_{j \not \in \xi} |\beta_j| > \sigma z_{0n}\right\},\\
\Theta_\text{test} &= \Theta_1 \cup \Theta_2,
\end{align*}
with
\begin{align*}
\Theta_1 &= \left\{ (\sigma,\xi,\bm{\beta}) \in \Theta_0^c: \frac{\sigma^2}{\sigma_\star^2} \not \in \left[\frac{1-M_1\epsilon_n}{1+M_1\epsilon_n}, \frac{1+M_1\epsilon_n}{1-M_1\epsilon_n}\right]\right\},\\
\Theta_2 &= \left\{ (\sigma,\xi,\bm{\beta}) \in \Theta_0^c \cap \Theta_1^c: \left\Vert \bm{\beta} - \bm{\beta}^\star \right\Vert > M_2 \sigma_\star \epsilon_n/\sqrt{\lambda}\right\}.
\end{align*}
We take the test function as
$$\phi = \max\{\phi_1, \phi_2\},$$
with
\begin{align*}
\phi_1 &=  1\left\{\max_{\xi \in \mc{F}:~|\xi \setminus \xi_\star| \le Ks} \left|\frac{\mbf{Y}^\T(\mbf{I} - \mbf{P}_{\xi \cup \xi_\star})\mbf{Y}}{n \sigma_\star^2}-1 \right| > M_1\epsilon_n\right\},\\
\phi_2 &=  1\left\{\max_{\xi \in \mc{F}:~|\xi \setminus \xi_\star| \le Ks} \left\Vert \mbf{X}_{\xi \cup \xi_\star}^\dagger \mbf{Y} - \bm{\beta}^\star_{\xi \cup \xi_\star}\right\Vert > M_2\sigma_\star \epsilon_n/2\sqrt{\lambda}\right\}.
\end{align*}

Next, by the following lemmas, the conditions required by Lemma \ref{lem:barron} are verified with the above-defined $\Theta_0$, $\Theta_\text{test}$ and $\phi$.

\begin{lemma} \label{lem:prior}
Suppose Assumption \ref{asm1}(b)(c) hold. Then
$$\Pi(\Theta_0) \lesssim e^{-[A_2 K-o(1)] s\log p}.$$
\end{lemma}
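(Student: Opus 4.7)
The plan is to apply a union bound to the three pieces in the definition of $\Theta_0$, with each piece controlled by exactly one clause of Assumption \ref{asm1}. The lemma uses no information from the data or from the slab and variance priors, and is essentially an accounting step.

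First, the piece $\{\xi \notin \mc{F}\}$ carries zero prior mass, since Assumption \ref{asm1}(b) forces the model-selection prior to be normalized over $\mc{F}$, with normalizer $Z = \sum_{\xi \in \mc{F}} \pi(\xi) \ge \pi(\emptyset) \asymp 1$. For the piece $\{|\xi \setminus \xi_\star| > Ks\}$, I would note that $|\xi \setminus \xi_\star| > Ks$ implies $|\xi| > Ks$, so applying the tail bound in Assumption \ref{asm1}(b) at $t = Ks$ gives $\Pi(|\xi \setminus \xi_\star| > Ks) \le Z^{-1} \sum_{|\xi| > Ks} \pi(\xi) \lesssim p^{-A_2 K s}$.

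For the third piece $\{\max_{j \notin \xi} |\beta_j| > \sigma z_{0n}\}$, I would condition on $(\sigma,\xi)$: each $\beta_j/\sigma$ with $j \notin \xi$ has density $h_0$, and by Assumption \ref{asm1}(c), $\P(|\beta_j/\sigma| > z_{0n}) = e^{-n}$. A union bound over at most $p$ inactive indices yields a conditional probability $\le p e^{-n}$ uniform in $(\sigma,\xi)$, so the marginal satisfies the same bound.

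Combining gives $\Pi(\Theta_0) \lesssim p^{-A_2 K s} + p e^{-n}$. The only remaining step is to verify that $p e^{-n}$ does not spoil the claimed rate: since $\epsilon_n = \sqrt{s \log p / n} \to 0$ we have $s \log p \prec n$, so $n - \log p \gg A_2 K s \log p$ and the second term is absorbed into the first, yielding $\Pi(\Theta_0) \lesssim e^{-[A_2 K - o(1)] s \log p}$. I do not anticipate any real obstacle; the only nontrivial step is this last rate comparison, and it follows directly from $\epsilon_n \to 0$.
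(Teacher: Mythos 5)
Your proof is correct and follows essentially the same route as the paper's: a union bound over the three pieces of $\Theta_0$, with the model-size tail controlled by Assumption \ref{asm1}(b) after normalizing by $\pi(\emptyset)\asymp 1$, and the spike tail controlled by the defining property of $z_{0n}$ in Assumption \ref{asm1}(c) followed by a union bound over at most $p$ inactive coordinates. Your explicit final check that $pe^{-n}$ is absorbed into $e^{-[A_2K-o(1)]s\log p}$ (via $s\log p \prec n$) is the same rate comparison the paper leaves implicit.
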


\begin{lemma} \label{lem:test1}
$$\En \phi_1 \lesssim e^{-[M_1^2/8 - K - o(1)]n\epsilon_n^2}, ~\sup_{(\sigma, \xi, \bm{\beta}) \in \Theta_1} \Ea(1-\phi_1) \lesssim e^{-[M_1^2/8 - o(1)]n\epsilon_n^2}.$$
\end{lemma}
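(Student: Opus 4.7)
The plan is to bound both probabilities via Laurent--Massart-type chi-squared concentration applied to the residual statistic $\mbf{Y}^\T(\mbf{I} - \mbf{P}_{\xi'\cup\xi_\star})\mbf{Y}/(n\sigma_\star^2)$, taking advantage of the fact that $\xi_\star \subseteq \xi'\cup\xi_\star$ makes this projection annihilate the true signal under $\Pn$ and nearly annihilate the working signal under $\Pa$ (via Assumption \ref{asm1}(c)).

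For the first inequality, any $\xi' \in \mc{F}$ with $|\xi'\setminus\xi_\star| \le Ks$ satisfies $\rank{\xi'\cup\xi_\star} \le (K+1)s$ and $(\mbf{I} - \mbf{P}_{\xi'\cup\xi_\star})\mbf{X}\bm{\beta}^\star = \mbf{0}$, so under $\Pn$ the statistic equals $W/n$ with $W \sim \chi^2_k$ and $k = n - \rank{\xi'\cup\xi_\star}$. Since $|1 - k/n| \le (K+1)s/n = o(\epsilon_n)$, the event $\{|W/n - 1| > M_1\epsilon_n\}$ lies in $\{|W - k| > (M_1 - o(1))n\epsilon_n\}$, which the Laurent--Massart bound $\P(|\chi^2_k - k| \ge y) \le 2\exp(-y^2/(8k))$ caps at $2\exp(-[M_1^2/8 - o(1)]n\epsilon_n^2)$. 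A union bound over at most $\sum_{j \le Ks}\binom{p}{j}2^s \le e^{[K + o(1)]s\log p}$ qualifying models $\xi'$ then yields $\En\phi_1 \lesssim e^{-[M_1^2/8 - K - o(1)]n\epsilon_n^2}$.

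For the second inequality, fix $(\sigma, \xi, \bm{\beta}) \in \Theta_1$, set $\tau := \sigma^2/\sigma_\star^2$, $\mbf{Q} := \mbf{I} - \mbf{P}_{\xi\cup\xi_\star}$, and $T := \mbf{Y}^\T\mbf{Q}\mbf{Y}/(n\sigma_\star^2)$. Because $\xi$ itself qualifies in the max defining $\phi_1$, $1 - \phi_1 \le \mathbf{1}\{|T - 1| \le M_1\epsilon_n\}$. Since $\mbf{Q}\mbf{X}_j = \mbf{0}$ for $j \in \xi\cup\xi_\star$, we decompose $\mbf{Q}\mbf{Y} = \mbf{Q}\mbf{X}_{(\xi\cup\xi_\star)^c}\bm{\beta}_{(\xi\cup\xi_\star)^c} + \sigma\mbf{Q}\bm{\varepsilon}$. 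The spike bound $|\beta_j| \le \sigma z_{0n}$ for $j \notin \xi$ (from $\Theta_0^c$) together with Assumption \ref{asm1}(c) gives $\|\mbf{Q}\mbf{X}_{(\xi\cup\xi_\star)^c}\bm{\beta}_{(\xi\cup\xi_\star)^c}\|^2/(n\sigma_\star^2) \le \tau p^2 z_{0n}^2 = o(\tau\epsilon_n)$, and a similar bound shows the Gaussian cross-term has standard deviation $o(\tau\epsilon_n)$. Expanding $\|\mbf{Q}\mbf{Y}\|^2$ therefore produces $T = \tau W/n + o(\tau\epsilon_n)$ with $W \sim \chi^2_k$, so $\E T = \tau + o(\tau\epsilon_n)$. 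The condition $(\sigma, \xi, \bm{\beta}) \in \Theta_1$ forces $|\tau - 1| \ge (2M_1 - o(1))\epsilon_n$, so $|T - 1| \le M_1\epsilon_n$ entails $|T - \E T| \ge (M_1 - o(1))\epsilon_n$; Laurent--Massart applied to $W$ then yields probability $\le 2\exp(-[M_1^2/8 - o(1)]n\epsilon_n^2)$ for bounded $\tau$, and probability $e^{-\Omega(n)}$ for $\tau$ far from $1$ (since the required deviation is of order $|\tau - 1|$ while the standard deviation of $T - \E T$ is of order $\tau/\sqrt{n}$).

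The main obstacle is the analysis of $T$ under $\Pa$ for the second inequality: one must verify that the deterministic bias and the Gaussian cross-term arising from $\mbf{Q}\mbf{X}_{(\xi\cup\xi_\star)^c}\bm{\beta}_{(\xi\cup\xi_\star)^c}$ are of smaller order than $\epsilon_n$ uniformly on $\Theta_1$ using Assumption \ref{asm1}(c), and the concentration step must be split into a near-$1$ and a far-from-$1$ regime of $\tau$, since the fluctuation scale of $T - \E T$ itself depends on $\tau$.
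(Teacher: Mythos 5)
Your argument is correct, and part (a) is essentially the paper's proof: reduce to $\bm{\varepsilon}^\T(\mbf{I}-\mbf{P}_{\xi\cup\xi_\star})\bm{\varepsilon}/n$ and union-bound with chi-squared tails. (The paper first uses monotonicity of projections to restrict the union to models of size $Ks$ or $\emptyset$, but your direct count $e^{[K+o(1)]s\log p}$ over all qualifying models gives the same exponent.) For part (b) you take a genuinely different, non-pivotal route. The paper first flips the event: on $\Theta_1$ the separation $\sigma^2/\sigma_\star^2\notin[\frac{1-M_1\epsilon_n}{1+M_1\epsilon_n},\frac{1+M_1\epsilon_n}{1-M_1\epsilon_n}]$ converts $\{|T-1|\le M_1\epsilon_n\}$ (with $\sigma_\star^2$-normalization) into $\{|T'-1|>M_1\epsilon_n\}$ with $T'=\mbf{Y}^\T(\mbf{I}-\mbf{P}_{\xi\cup\xi_\star})\mbf{Y}/(n\sigma^2)$, which under $\Pa$ equals $\Vert\mbf{R}(\bm{\varepsilon}+\mbf{b})\Vert^2/n$ with $\Vert\mbf{b}\Vert\le pz_{0n}$; it then uses the deterministic sandwich $\Vert\mbf{R}\bm{\varepsilon}\Vert^2-2\Vert\mbf{R}\bm{\varepsilon}\Vert\Vert\mbf{b}\Vert\le\Vert\mbf{R}(\bm{\varepsilon}+\mbf{b})\Vert^2\le(\Vert\mbf{R}\bm{\varepsilon}\Vert+\Vert\mbf{b}\Vert)^2$ to absorb the cross term, so a single chi-squared tail bound at level $M_1\epsilon_n-3pz_{0n}$ applies uniformly in $\sigma$. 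Because $T'$ is (up to the $\mbf{b}$ perturbation) pivotal, no case analysis in $\tau=\sigma^2/\sigma_\star^2$ is needed. Your version keeps the $\sigma_\star^2$-normalized $T$ and compares it to $\E T\approx\tau$, which is why you must split into $\tau$ near $1$ (where $\tau^2=1+o(1)$ and the deviation $(M_1-o(1))\epsilon_n$ yields exactly the exponent $M_1^2/8-o(1)$) and $\tau$ far from $1$ (where the required deviation is of order $|\tau-1|$ and the bound improves to $e^{-\Omega(n)}$); as stated, ``for bounded $\tau$'' should really read ``for $\tau=1+O(\epsilon_n)$,'' since for, say, $\tau=3/2$ the minimal-deviation bound alone would only give $M_1^2/(8\tau^2)$, and it is the larger actual deviation that rescues the rate. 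You correctly flag both this split and the need to control the Gaussian cross term; replacing your separate Gaussian tail bound by the paper's Cauchy--Schwarz sandwich would streamline that step. With those details filled in, your proof goes through.
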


\begin{lemma} \label{lem:test2}
Suppose Assumption \ref{asm2} holds. Then
$$\En \phi_2 \lesssim e^{-[M_2^2/8 - K - o(1)]n\epsilon_n^2}, ~\sup_{(\sigma, \xi, \bm{\beta}) \in \Theta_2} \Ea(1-\phi_2) \lesssim e^{-[M_2^2/8 - o(1)]n\epsilon_n^2}.$$
\end{lemma}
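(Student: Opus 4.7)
The plan is to reduce both bounds to chi-squared tail estimates, exploiting the fact that the \texttt{MUEV} condition yields
$$\|\mbf{X}_{\xi\cup\xi_\star}^\dagger\mbf{v}\|^{2}\le (n\lambda)^{-1}\|\mbf{P}_{\xi\cup\xi_\star}\mbf{v}\|^{2}$$
for every $\mbf{v}\in\RR^{n}$ and every admissible $\xi$, i.e.\ $\xi\in\mc{F}$ with $|\xi\setminus\xi_\star|\le Ks$ (so that $|\xi\cup\xi_\star|\le (K+1)s$ and Assumption~\ref{asm2} applies). This inequality follows at once from an SVD of $\mbf{X}_{\xi\cup\xi_\star}$, replacing its smallest singular value by the uniform lower bound $\sqrt{n\lambda}$.

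For the null bound I would substitute $\mbf{Y}=\mbf{X}\bm{\beta}^\star+\sigma_\star\bm{\varepsilon}$; since $\bm{\beta}^\star$ is supported on $\xi_\star\subseteq\xi\cup\xi_\star$, the identity $\mbf{X}_{\xi\cup\xi_\star}^\dagger\mbf{Y}-\bm{\beta}^\star_{\xi\cup\xi_\star}=\sigma_\star\mbf{X}_{\xi\cup\xi_\star}^\dagger\bm{\varepsilon}$ holds exactly. The inequality above then embeds the event $\{\sigma_\star\|\mbf{X}_{\xi\cup\xi_\star}^\dagger\bm{\varepsilon}\|>M_2\sigma_\star\epsilon_n/(2\sqrt{\lambda})\}$ into $\{\bm{\varepsilon}^\T\mbf{P}_{\xi\cup\xi_\star}\bm{\varepsilon}>M_2^{2}n\epsilon_n^{2}/4\}$, and the quadratic form is $\chi^{2}_{|\xi\cup\xi_\star|}$ with degrees of freedom at most $(K+1)s=o(n\epsilon_n^{2})$. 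A Chernoff chi-squared bound delivers per-model probability $\lesssim e^{-[M_2^{2}/8-o(1)]n\epsilon_n^{2}}$, and a union bound over the at most $e^{[K+o(1)]s\log p}=e^{[K+o(1)]n\epsilon_n^{2}}$ admissible models yields $\En\phi_2\lesssim e^{-[M_2^{2}/8-K-o(1)]n\epsilon_n^{2}}$.

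For the alternative bound I would fix $(\sigma,\xi,\bm{\beta})\in\Theta_2$, use $\mbf{Y}=\mbf{X}\bm{\beta}+\sigma\tilde{\bm{\varepsilon}}$, and observe that $\gamma=\xi$ is an admissible choice in the maximum defining $\phi_2$ because $\Theta_2\subseteq\Theta_0^c$. Decomposing $\mbf{X}\bm{\beta}$ along $\xi\cup\xi_\star$ and its complement gives
\begin{align*}
\mbf{X}_{\xi\cup\xi_\star}^\dagger\mbf{Y}-\bm{\beta}^\star_{\xi\cup\xi_\star}=(\bm{\beta}_{\xi\cup\xi_\star}-\bm{\beta}^\star_{\xi\cup\xi_\star})+\mbf{X}_{\xi\cup\xi_\star}^\dagger\mbf{X}_{(\xi\cup\xi_\star)^c}\bm{\beta}_{(\xi\cup\xi_\star)^c}+\sigma\mbf{X}_{\xi\cup\xi_\star}^\dagger\tilde{\bm{\varepsilon}}.
\end{align*}
Since $\Theta_2\subseteq\Theta_0^c\cap\Theta_1^c$ supplies $\sigma\asymp\sigma_\star$ and $|\beta_j|\le\sigma z_{0n}$ for all $j\notin\xi$, Assumption~\ref{asm1}(c) provides the uniform bounds $\|\bm{\beta}_{(\xi\cup\xi_\star)^c}\|\le\sqrt{p}\sigma z_{0n}$ and $\|\bm{\beta}_{(\xi\cup\xi_\star)^c}\|_1\le p\sigma z_{0n}=o(\sigma_\star\epsilon_n)$; combined with $\lambda\le 1$ (a consequence of $\tr(\mbf{X}_{\xi\cup\xi_\star}^\T\mbf{X}_{\xi\cup\xi_\star}/n)=|\xi\cup\xi_\star|$ under unit-normalized columns), both reduce to $o(\sigma_\star\epsilon_n/\sqrt{\lambda})$. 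Pythagoras then gives $\|\bm{\beta}_{\xi\cup\xi_\star}-\bm{\beta}^\star_{\xi\cup\xi_\star}\|\ge M_2\sigma_\star\epsilon_n/\sqrt{\lambda}\cdot(1-o(1))$, while the drift term is at most $(n\lambda)^{-1/2}\cdot\sqrt{n}\|\bm{\beta}_{(\xi\cup\xi_\star)^c}\|_1=o(\sigma_\star\epsilon_n/\sqrt{\lambda})$. The reverse triangle inequality thus forces, on $\{1-\phi_2=1\}$, $\sigma\|\mbf{X}_{\xi\cup\xi_\star}^\dagger\tilde{\bm{\varepsilon}}\|\ge (M_2\sigma_\star\epsilon_n/(2\sqrt{\lambda}))(1-o(1))$, and the same chi-squared Chernoff bound\textemdash now without a union bound, since only one model is analyzed\textemdash delivers $\sup_{\Theta_2}\Ea(1-\phi_2)\lesssim e^{-[M_2^{2}/8-o(1)]n\epsilon_n^{2}}$.

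The main obstacle is bookkeeping the various $o(1)$ contributions so that they can be absorbed cleanly into the leading constant $M_2^{2}/8$: in particular, one must verify that the spike-mass bound of Assumption~\ref{asm1}(c) leaves $\|\bm{\beta}_{(\xi\cup\xi_\star)^c}\|$ negligible relative to $M_2\sigma_\star\epsilon_n/\sqrt{\lambda}$ \emph{uniformly} in $\xi$ and even as $\lambda\to 0$, and track the dimension-dependent terms in the Chernoff bound so that $(K+1)s$ drops out as $o(n\epsilon_n^{2})$. The null bound acquires the extra $-K$ in its exponent from the union-bound count over admissible $\xi$, whereas the pointwise alternative analysis does not suffer this loss\textemdash precisely explaining the discrepancy between the two rates stated in the lemma.
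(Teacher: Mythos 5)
Your proposal is correct and follows essentially the same route as the paper: for the null bound, the identity $\mbf{X}_{\xi\cup\xi_\star}^\dagger\mbf{Y}-\bm{\beta}^\star_{\xi\cup\xi_\star}=\sigma_\star\mbf{X}_{\xi\cup\xi_\star}^\dagger\bm{\varepsilon}$ combined with $\|\mbf{X}_{\xi\cup\xi_\star}^\dagger\mbf{v}\|^2\le(n\lambda)^{-1}\|\mbf{P}_{\xi\cup\xi_\star}\mbf{v}\|^2$, a $\chi^2_{(K+1)s}$ tail, and a union bound costing the factor $e^{[K+o(1)]s\log p}$; for the alternative bound, fixing the single model $\gamma=\xi$ (the paper formalizes this via its Lemma on splitting tests), applying the reverse triangle inequality with the spike contributions shown negligible via Assumption 1(c), and using the same chi-squared tail without a union bound. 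The bookkeeping you flag (uniformity of $pz_{0n}/\epsilon_n\to 0$ and absorbing $(K+1)s$ into $o(n\epsilon_n^2)$) is handled in the paper exactly as you anticipate.
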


\begin{lemma} \label{lem:merging}
Suppose Assumption \ref{asm1} holds. For any small constant $\eta > 0$,
$$\Pn \left(\int \frac{\mc{N}(\mbf{Y}|\mbf{X}\bm{\beta}, \sigma^2\mbf{I})}{\mc{N}(\mbf{Y}| \mbf{X}\bm{\beta}^\star, \sigma_\star^2\mbf{I})} d\Pi(\sigma,\xi,\bm{\beta}) \le e^{-(A_1+A_3+1-\eta)s\log p} \right) \lesssim e^{-C_\eta s\log p},$$
with constant $C_\eta > 0$ depending on $\eta$.
\end{lemma}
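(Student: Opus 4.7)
\emph{Strategy.} The plan is the classical prior-mass lower-bound technique. I localise the integral to a small Kullback--Leibler neighbourhood $B$ of the true parameter $(\sigma_\star,\xi_\star,\bm\beta^\star)$, then apply Jensen's inequality,
$$
\int L\,d\Pi \;\ge\; \int_B L\,d\Pi \;\ge\; \Pi(B)\,\exp\!\Bigl(\Pi(B)^{-1}\!\int_B \log L\,d\Pi\Bigr),
$$
where $L = \mc{N}(\mbf{Y}\mid\mbf{X}\bm\beta,\sigma^{2}\mbf{I})/\mc{N}(\mbf{Y}\mid\mbf{X}\bm\beta^{\star},\sigma_\star^2\mbf{I})$. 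It then suffices to lower bound $\log\Pi(B)$ deterministically by $-(A_1+A_3+1)s\log p + o(s\log p)$, and to lower bound $\Pi(B)^{-1}\int_B \log L\,d\Pi$ by $-o(s\log p)$ with $\Pn$-probability at least $1-e^{-C_\eta s\log p}$; the $o(s\log p)$ slack in both pieces is absorbed into the arbitrary $\eta$ in the conclusion.

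\emph{Prior mass.} I take
$$
B = \{\xi = \xi_\star\}\cap\{|\sigma^2 - \sigma_\star^2|\le \delta_\sigma\}\cap\{|\beta_j - \beta_j^\star|\le \delta\sigma,\,j\in\xi_\star\}\cap\{|\beta_j|\le \sigma z_{0n},\,j\notin\xi_\star\},
$$
with $\delta_\sigma$ a vanishingly small multiple of $\sigma_\star^{2}\epsilon_n$ and $\delta$ of order $\sqrt{\log p/(sn)}$ (or smaller, so that the KL is strictly $o(s\log p)$ on $B$). The four parts of Assumption \ref{asm1} then supply the four factors of $\Pi(B)$: $\int_{|\sigma^2-\sigma_\star^2|\le \delta_\sigma}g(\sigma^2)\,d\sigma^2 \gtrsim \delta_\sigma$ from (a); $\pi(\xi_\star)\ge p^{-A_1 s}$ from (b); a slab mass $\gtrsim \delta p^{-A_3}$ per active coordinate from (d), since the interval $[\beta_j^\star/\sigma - \delta,\beta_j^\star/\sigma + \delta]\subset[-z_{1n},z_{1n}]$ for $\sigma$ near $\sigma_\star$ and $h_1\succeq p^{-A_3}$ there; and a spike mass $\ge 1-e^{-n}$ per inactive coordinate from (c), so $(1-e^{-n})^{p-s}=1-o(1)$. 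Multiplying and taking logs, and noting that $s\log(1/\delta)\le s\log p$ because $sn\le p^2$ (since $p>n\ge s$), we obtain $\log\Pi(B)\ge -(A_1+A_3+1)s\log p + o(s\log p)$.

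\emph{KL control and concentration.} The Gaussian KL identity reads
$$
\mathrm{KL}(\Pn\,\|\,P_{(\sigma,\xi,\bm\beta)}) = \tfrac{n}{2}\bigl[\log(\sigma^2/\sigma_\star^2) + \sigma_\star^2/\sigma^2 - 1\bigr] + \tfrac{1}{2\sigma^{2}}\|\mbf{X}(\bm\beta - \bm\beta^\star)\|^{2}.
$$
On $B$ the variance piece is $O(n\delta_\sigma^2/\sigma_\star^4)=o(s\log p)$. For the quadratic piece, split $\|\mbf{X}(\bm\beta-\bm\beta^\star)\|^2 \le 2\|\mbf{X}_{\xi_\star}(\bm\beta-\bm\beta^\star)_{\xi_\star}\|^2 + 2\|\mbf{X}_{\xi_\star^c}\bm\beta_{\xi_\star^c}\|^2 \le 2s^2 n\delta^2 + 2p^2 n\sigma^{2}z_{0n}^{2}$; both are $o(s\log p\cdot\sigma^2)$ by the choice of $\delta$ and by Assumption \ref{asm1}(c). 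An analogous computation yields $\mathrm{Var}_\star[\log L] = O(s\log p)$ uniformly on $B$. Since $\log L$ is affine-plus-quadratic in the Gaussian vector $\bm\varepsilon$ for each fixed $(\sigma,\xi,\bm\beta)$, a Hanson--Wright or chi-square tail bound combined with Fubini to pass through the $B$-average gives
$$
\Pn\Bigl(\Pi(B)^{-1}\!\int_B \log L\,d\Pi < -\tfrac{\eta}{2}s\log p\Bigr) \;\lesssim\; e^{-C_\eta s\log p}.
$$

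\emph{Main obstacle.} The central trade-off is the calibration of the slab half-width $\delta$: shrinking $\delta$ erodes the $s\log(1/\delta)$ contribution to $\log\Pi(B)$, while enlarging $\delta$ inflates the $\|\mbf{X}_{\xi_\star}(\bm\beta-\bm\beta^\star)_{\xi_\star}\|^2$ term in the KL, and the constant ``$+1$'' in the exponent is precisely the slack produced by this balance at $\delta\asymp\sqrt{\log p/(sn)}$. The more delicate technical point is upgrading a Chebyshev-type variance bound on $\Pi(B)^{-1}\int_B \log L\,d\Pi$ into the exponential tail $e^{-C_\eta s\log p}$; this step crucially uses Gaussianity of $\bm\varepsilon$ and the fact that $\log L$ has the form of an affine-plus-quadratic chi-square-like functional with uniformly controlled variance on $B$.
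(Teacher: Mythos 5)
Your proposal is correct and follows essentially the same route as the paper: localize to the same small neighbourhood of $(\sigma_\star,\xi_\star,\bm{\beta}^\star)$, lower bound its prior mass factor-by-factor using Assumption \ref{asm1}(a)--(d) to get the exponent $A_1+A_3+1$ (with the ``$+1$'' coming from the slab half-width $\asymp\sqrt{\log p/(sn)}$ exactly as in the paper's step (ii)), and control the log-likelihood ratio on that neighbourhood via Gaussian/chi-square tails to obtain the probability $e^{-C_\eta s\log p}$. The only difference is cosmetic: you apply Jensen's inequality to the $B$-average of $\log L$ (a single affine-plus-quadratic functional of $\bm{\varepsilon}$, whose coefficients you correctly size so that standard tails give the exponential rate), whereas the paper bounds $\inf_{\Theta_\star}\log L$ uniformly on the event $\{\Vert\mbf{P}_{\xi_\star}\bm{\varepsilon}\Vert^2<Cn\epsilon_n^2,\ \Vert\bm{\varepsilon}\Vert^2<4n\}$; both reduce to the same tail estimates.
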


In particular, Lemma \ref{lem:prior} verifies condition (a) of Lemma \ref{lem:barron} with
$$\delta_0 = e^{-[A_2 K -o(1)]s\log p}.$$
Lemmas \ref{lem:test1}-\ref{lem:test2} verifies condition (b) of Lemma \ref{lem:barron} as follows:
\begin{align*}
\sup_{(\sigma, \xi, \bm{\beta}) \in \Theta_\text{test}} \Ea(1-\phi)
&\le \max_{j=1}^2 \left\{\sup_{(\sigma, \xi, \bm{\beta}) \in \Theta_j} \Ea(1-\phi_j)\right\}\\
&\lesssim e^{- [\min\{M_1,M_2\}^2/8 - o(1)]s\log p} =: \delta_1,\\
\En \phi \le \sum_{j=1}^2 \En \phi_j &\lesssim e^{- [\min\{M_1,M_2\}^2/8 - K - o(1)]s\log p} =: \delta_1'.
\end{align*}
Lemma \ref{lem:merging} verifies condition (c) of Lemma \ref{lem:barron} with
$$\delta_2 = e^{-[A_1+A_3+1-o(1)]s\log p},~~~\delta_2' = e^{-C_\eta s\log p}.$$
Finally, we note that $\delta_0 > \delta_1$, because $M_1,M_2 > \sqrt{8A_2K}$. Choosing suitable $\delta_3 = e^{-Cs\log p}$ such that
$$A_1+A_3+1-o(1) < C < A_2 K - o(1)$$
completes the proof.

\subsection{Proof of Theorem \ref{thm2}}
Since
$$\widehat{\Theta} \cap \{(\sigma, \xi, \bm{\beta}): \xi \supseteq \xi_\star\} \subseteq \widetilde{\Theta},$$
it suffices to show that
\begin{equation} \label{eqn:thm2}
\Pn \left(\Pi((\sigma, \xi, \bm{\beta}) \in \widehat{\Theta}^c \text{~or~} \xi \not \supseteq \xi_\star|\mbf{X},\mbf{Y}) \ge e^{-C_3s\log p}\right) \lesssim e^{-C_4s\log p}.
\end{equation}
To this end, we use the technique developed from Lemma \ref{lem:barron} again. Recall notation $\Theta_0$, $\Theta_1$, $\Theta_2$, $\phi_1$ and $\phi_2$ in the proof of Theorem \ref{thm1} and redefine (with a little abuse of notation)
\begin{align*}
\Theta_\text{test} &= \Theta_1 \cup \Theta_2 \cup \Theta_3, ~~\phi = \max\{\phi_1, \phi_2, \phi_3\},
\end{align*}
where
\begin{align*}
\Theta_3 &= \{(\sigma, \xi, \bm{\beta}) \in \Theta_0^c \cap \Theta_1^c: \xi \not \supseteq \xi_\star\},\\
\phi_3 &= 1\left\{ \min_{\xi \in \mathcal{F}:~|\xi \setminus \xi_\star| \le Ks,~\xi \not \supseteq \xi_\star} \left\Vert \left(\mbf{P}_{\xi \cup \xi_\star} - \mbf{P}_{\xi} \right)\mbf{Y} \right\Vert < M_3\sigma_\star\sqrt{n}\epsilon_n/2\right\}.
\end{align*}
With $M_3 = \sqrt{8A_2K}$,
$$\Theta_0 \cup \Theta_\text{test} = \widehat{\Theta}^c \cup \{(\sigma, \xi, \bm{\beta}): \xi \not \supseteq \xi_\star\}.$$

We proceed to verify conditions of Lemma \ref{lem:barron}. As in the proof of Theorem \ref{thm1}, Lemma \ref{lem:prior} and Lemma \ref{lem:merging} have verified conditions (a) and (c). It is only left to verify condition (b) for redefined $\Theta_\text{test}$ and $\phi$. The following lemma serves for this purpose.
\begin{lemma} \label{lem:test3}
Suppose Assumption \ref{asm2} and Assumption \ref{asm3} hold with $M_3 = \sqrt{8A_2K}$. Then
\begin{gather*}
\En \phi_3  \lesssim e^{-[M_3^2/8 - o(1)]n\epsilon_n^2},~\sup_{(\sigma, \xi, \bm{\beta}) \in \Theta_3} \Ea(1-\phi_3) \lesssim e^{-[M_3^2/8 - o(1)]n\epsilon_n^2}.
\end{gather*}
\end{lemma}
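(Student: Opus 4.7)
The overall strategy is to reduce both bounds to a single-model analysis of $\|(\mbf{P}_\gamma - \mbf{P}_\xi)\mbf{Y}\|$ where $\gamma := \xi \cup \xi_\star$, and then handle the uniformity over $\xi$ by enumeration in the first bound and by a single (parameter-specified) choice $\tilde\xi = \xi$ in the second. Under the null, $\|(\mbf{P}_\gamma - \mbf{P}_\xi)\mbf{Y}\|^2/\sigma_\star^2$ is a non-central $\chi^2_k$ with $k = |\xi_\star \setminus \xi| \le s$ and non-centrality governed by the deterministic signal $\bm{u} := (\mbf{I} - \mbf{P}_\xi)\mbf{X}_{\xi_\star}\bm{\beta}^\star_{\xi_\star}$; under an alternative in $\Theta_3$ the signal essentially vanishes and only the noise survives.

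The key step is a uniform lower bound $\|\bm{u}\| \ge M_3 \sigma_\star \sqrt{n}\epsilon_n$ for every admissible $\xi$ (i.e.\ $\xi \in \mc{F}$, $|\xi\setminus\xi_\star|\le Ks$, $\xi\not\supseteq\xi_\star$). Since $\mbf{P}_\xi$ is the identity on the range of $\mbf{X}_{\xi_\star \cap \xi}$, one has $\bm{u} = (\mbf{I} - \mbf{P}_\xi)\mbf{X}_{\xi_\star\setminus\xi}\bm{\beta}^\star_{\xi_\star \setminus \xi}$. Define $\bm{v} \in \mathbb{R}^{|\gamma|}$ by $\bm{v}_{\xi_\star \setminus \xi} = \bm{\beta}^\star_{\xi_\star \setminus \xi}$ and $\bm{v}_\xi = -(\mbf{X}_\xi^\T \mbf{X}_\xi)^{-1}\mbf{X}_\xi^\T \mbf{X}_{\xi_\star \setminus \xi}\bm{\beta}^\star_{\xi_\star \setminus \xi}$, so that $\mbf{X}_\gamma \bm{v} = \bm{u}$ and $\|\bm{v}\|\ge\|\bm{\beta}^\star_{\xi_\star\setminus\xi}\|$. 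Because $|\gamma| \le (K+1)s$, Assumption \ref{asm2} gives $\lambda_{\min}(\mbf{X}_\gamma^\T \mbf{X}_\gamma/n) \ge \lambda$, whence $\|\bm{u}\|^2 = \bm{v}^\T \mbf{X}_\gamma^\T \mbf{X}_\gamma \bm{v} \ge n\lambda \|\bm{\beta}^\star_{\xi_\star\setminus\xi}\|^2$. Assumption \ref{asm3} combined with $|\xi_\star \setminus \xi| \ge 1$ yields $\|\bm{\beta}^\star_{\xi_\star \setminus \xi}\| \ge M_3 \sigma_\star \epsilon_n / \sqrt{\lambda}$, and the claimed signal bound follows.

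With this in hand, for $\En \phi_3$ the event $\{\phi_3 = 1\}$ forces some admissible $\xi$ to satisfy $\|(\mbf{P}_\gamma - \mbf{P}_\xi)\mbf{Y}\| < M_3\sigma_\star\sqrt{n}\epsilon_n/2$; by the reverse triangle inequality, the Gaussian part must then exceed $M_3\sqrt{n}\epsilon_n/2$, a $\chi^2_k$ deviation of size $M_3^2 n\epsilon_n^2/4$. The standard tail $\P(\chi^2_k > u) \le \exp[-u/2 + (k/2)\log(eu/k)]$ with $k \le s$ and $n\epsilon_n^2 = s\log p$ gives per-model probability $e^{-[M_3^2/8 - o(1)]n\epsilon_n^2}$; a union bound over the admissible models (at most $e^{O(n\epsilon_n^2)}$ of them) is absorbed into the stated bound under the scaling $M_3^2 = 8A_2K$, for $A_2$ chosen compatibly with the enumeration cost. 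For $\sup_{\Theta_3}\Ea(1-\phi_3)$, fix $(\sigma,\xi,\bm{\beta})\in\Theta_3$ and choose $\tilde\xi = \xi$ in the minimum defining $\phi_3$; then $(\mbf{P}_\gamma - \mbf{P}_\xi)\mbf{X}_\xi = 0$, so $(\mbf{P}_\gamma - \mbf{P}_\xi)\mbf{Y} = (\mbf{P}_\gamma - \mbf{P}_\xi)\mbf{X}_{\xi^c}\bm{\beta}_{\xi^c} + \sigma(\mbf{P}_\gamma - \mbf{P}_\xi)\bm{\varepsilon}$. On $\Theta_0^c$, Assumption \ref{asm1}(c) gives $\|\mbf{X}_{\xi^c}\bm{\beta}_{\xi^c}\| \le p\sigma z_{0n}\sqrt{n} \prec \sigma\sqrt{\log p} = o(\sigma_\star\sqrt{n}\epsilon_n)$, using $\sigma \asymp \sigma_\star$ on $\Theta_1^c$; hence $\{1-\phi_3=1\}$ again reduces to a $\chi^2_{\le s}$ tail exceeding $(M_3/2-o(1))\sqrt{n}\epsilon_n$, with no union bound required.

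The main obstacle is the uniform signal bound $\|\bm{u}\|\ge M_3\sigma_\star\sqrt{n}\epsilon_n$: it relies critically on the Schur-complement-style trick of packaging $\mbf{u}$ as $\mbf{X}_\gamma\bm{v}$ with $\|\bm{v}\|\ge\|\bm{\beta}^\star_{\xi_\star\setminus\xi}\|$, so that the MUEV bound on the $(K+1)s$-dimensional submatrix $\mbf{X}_\gamma^\T\mbf{X}_\gamma$ directly controls the projection residual, matching the order chosen in Assumption \ref{asm2}, while the beta-min lower bound of Assumption \ref{asm3} is precisely what makes the right-hand side exceed the noise-level threshold uniformly in $\xi$. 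Once this ingredient is fixed, the chi-squared tail estimates and the enumeration bookkeeping are routine.
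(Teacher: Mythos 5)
Your signal lower bound and your treatment of the second bound are essentially the paper's argument. The explicit construction of $\bm{v}$ with $\mbf{X}_{\xi\cup\xi_\star}\bm{v} = (\mbf{I}-\mbf{P}_\xi)\mbf{X}_{\xi_\star\setminus\xi}\bm{\beta}^\star_{\xi_\star\setminus\xi}$ and $\Vert\bm{v}\Vert\ge\Vert\bm{\beta}^\star_{\xi_\star\setminus\xi}\Vert$ is exactly the proof of the Schur-complement eigenvalue inequality the paper invokes (Lemma \ref{lem:schur}), and the reduction of $\sup_{\Theta_3}\Ea(1-\phi_3)$ to the single model $\tilde\xi=\xi$ matches the paper's use of Lemma \ref{lem:split}, including the $p\sigma z_{0n}\sqrt{n}$ control of $\Vert\mbf{X}_{\xi^c}\bm{\beta}_{\xi^c}\Vert$.

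There is, however, a genuine gap in your bound on $\En\phi_3$: the union bound over admissible models cannot be ``absorbed into the stated bound.'' There are of order ${p\choose Ks}\asymp e^{Ks\log p}=e^{Kn\epsilon_n^2}$ admissible $\xi$, so a union bound with per-model tail $e^{-[M_3^2/8-o(1)]n\epsilon_n^2}$ yields only $e^{-[M_3^2/8-K-o(1)]n\epsilon_n^2}$, whereas the lemma claims $e^{-[M_3^2/8-o(1)]n\epsilon_n^2}$ with no $K$ loss (contrast Lemmas \ref{lem:test1}--\ref{lem:test2}, which do carry the $-K$). With $M_3^2=8A_2K$ your bound becomes $e^{-[A_2K-K-o(1)]n\epsilon_n^2}$, which is vacuous whenever $A_2\le 1$ — in particular for the i.i.d.\ Bernoulli prior of Example \ref{example:bernoulli}, where $A_2=1$. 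Moreover $A_2$ is fixed by Assumption \ref{asm1}(b); it is not a free parameter you can ``choose compatibly with the enumeration cost.'' The paper avoids the union bound entirely: since $\mbf{P}_{\xi\cup\xi_\star}-\mbf{P}_\xi\le\mbf{P}_{\xi_\star}$, one has $\Vert(\mbf{P}_{\xi\cup\xi_\star}-\mbf{P}_\xi)\bm{\varepsilon}\Vert\le\Vert\mbf{P}_{\xi_\star}\bm{\varepsilon}\Vert$ deterministically and uniformly in $\xi$, so the maximum of the noise terms over all admissible models collapses to a single $\chi^2_s$ variable and one tail bound $\P(\chi^2_s>M_3^2n\epsilon_n^2/4)$ suffices. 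You need this operator-domination step (or an equivalent uniform control) to obtain the exponent as stated.
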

Next, combining Lemmas \ref{lem:test1}-\ref{lem:test2} and Lemma \ref{lem:test3} yields
\begin{align*}
\sup_{(\sigma, \xi, \bm{\beta}) \in \Theta_\text{test}} \Ea(1-\phi)
&\le \max_{j=1}^3 \left\{\sup_{(\sigma, \xi, \bm{\beta}) \in \Theta_j} \Ea(1-\phi_j)\right\}\\
&\lesssim e^{- [\min\{M_1,M_2,M_3\}^2/8 - o(1)]s\log p} =: \delta_1,\\
\En \phi \le \sum_{j=1}^3 \En \phi_j &\lesssim e^{- [\min\{M_1^2/8-K, M_2^2/8-K, M_3^2\} - o(1)]s\log p} =: \delta_1'.
\end{align*}

\subsection{Proof of Theorem \ref{thm3}}
The proof of Theorem \ref{thm1} uses two technical lemmas, which are stated as follows.

\begin{lemma} \label{lem:omega}
For any constant $\eta > 0$, let
\begin{gather*}
\Omega_1(\eta) = \cup_{t=s+1}^{(K+1)s} \cup_{\xi \in \mc{F}: ~\xi \supseteq \xi_\star,~|\xi| = t}\{\Vert (\mbf{P}_\xi - \mbf{P}_{\xi_\star})\bm{\varepsilon}\Vert^2 \ge (2+\eta)(t-s)\log p\}
\end{gather*}
and $\Omega_2 = \{\Vert \bm{\varepsilon}\Vert \ge 2\sqrt{n}\}$.  Then $\Omega(\eta) := \Omega_1(\eta) \cup \Omega_2$ satisfies
$$\P\left( \Omega(\eta) \right) \lesssim e^{-C_\eta s\log p},$$
for  some constant $C_\eta$ depending on $\eta$.
\end{lemma}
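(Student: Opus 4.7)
The plan is to bound $\P(\Omega_2)$ and $\P(\Omega_1(\eta))$ separately, the former by a direct chi-squared tail inequality and the latter by a chi-squared tail combined with a union bound over admissible overfitted supersets of $\xi_\star$, then glue them together with a final union bound.

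For $\Omega_2 = \{\Vert\bm{\varepsilon}\Vert\ge 2\sqrt{n}\}$, since $\Vert\bm{\varepsilon}\Vert^2\sim\chi^2_n$, a standard Laurent--Massart tail inequality immediately gives $\P(\Omega_2)\le e^{-cn}$ for some absolute constant $c>0$. Under the standing assumption $\epsilon_n^2=s\log p/n\to 0$, we have $s\log p\prec n$, so $e^{-cn}$ is much smaller than any $e^{-C_\eta s\log p}$ and can be absorbed.

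For $\Omega_1(\eta)$ the central linear-algebraic observation is that whenever $\xi\supseteq\xi_\star$ with both $\xi,\xi_\star\in\mc{F}$, the column space of $\mbf{X}_{\xi_\star}$ is a subspace of that of $\mbf{X}_\xi$, so $\mbf{P}_{\xi_\star}\mbf{P}_\xi=\mbf{P}_{\xi_\star}$ and $\mbf{P}_\xi-\mbf{P}_{\xi_\star}$ is an orthogonal projection of rank $|\xi|-s=t-s$. Since $\bm{\varepsilon}\sim\mc{N}(\bm{0},\mbf{I}_n)$, this forces $\Vert(\mbf{P}_\xi-\mbf{P}_{\xi_\star})\bm{\varepsilon}\Vert^2\sim\chi^2_{t-s}$. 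Applying the Laurent--Massart bound $\P(\chi^2_k\ge k+2\sqrt{kx}+2x)\le e^{-x}$ with $k=t-s$ and $x=(1+\eta/2)(t-s)\log p\,(1-o(1))$, chosen so that $k+2\sqrt{kx}+2x\le(2+\eta)(t-s)\log p$ for $p$ large, I would obtain
$$\P\bigl(\Vert(\mbf{P}_\xi-\mbf{P}_{\xi_\star})\bm{\varepsilon}\Vert^2\ge(2+\eta)(t-s)\log p\bigr)\le p^{-(1+\eta/2)(t-s)(1-o(1))}.$$
A union bound over $\xi\supseteq\xi_\star$ with $|\xi|=t$ costs at most $\binom{p-s}{t-s}\le p^{t-s}$ models, so the level-$t$ contribution is at most $p^{-(\eta/2)(t-s)(1-o(1))}$. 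Summing the resulting geometric-like series over $t=s+1,\ldots,(K+1)s$ is dominated by its $t=s+1$ term, giving $\P(\Omega_1(\eta))\lesssim p^{-\eta/2+o(1)}$, which can be absorbed into $e^{-C_\eta s\log p}$ for a suitable $C_\eta>0$ depending on $\eta$.

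The main delicacy is the constant bookkeeping: the factor $(2+\eta)$ in the threshold is exactly what is needed to make the chi-squared tail exponent $(1+\eta/2)(t-s)\log p$ strictly dominate the combinatorial factor $p^{t-s}$ coming from the number of overfitted supersets of $\xi_\star$ of size $t$, leaving the net negative exponent $p^{-(\eta/2)(t-s)(1-o(1))}$. Once this slack is secured, summing over $t$ and combining $\P(\Omega_1(\eta))+\P(\Omega_2)$ via the final union bound is routine.
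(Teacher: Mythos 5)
Your proposal follows essentially the same route as the paper: the projection-difference $\mbf{P}_\xi-\mbf{P}_{\xi_\star}$ has rank $t-s$ for full-rank $\xi\supseteq\xi_\star$, so the squared norm is $\chi^2_{t-s}$, a Laurent--Massart tail gives $p^{-(1+\eta/2)(t-s)}$ per model, the union bound over at most $p^{t-s}$ models at level $t$ leaves $p^{-(\eta/2)(t-s)}$, and summing over $t$ plus the crude $\chi^2_n$ bound for $\Omega_2$ finishes. One caveat you share with the paper's own proof: the resulting bound is really of order $p^{-\eta/2}$ (i.e.\ $p^{-C_\eta}$, which is how it is used in Theorem \ref{thm3}), and "absorbing" this into $e^{-C_\eta s\log p}$ is only legitimate when $s$ is bounded.
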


\begin{lemma} \label{lem:ratio}
Suppose Assumption \ref{asm1}(c), Assumption \ref{asm2} and condition (b) of Theorem \ref{thm3} hold. Conditional on event $\Omega(\eta)^c$ (with $\Omega(\eta)$ defined in Lemma \ref{lem:omega}),
$$\sup_{\xi \in \mc{F}:~\xi \supseteq \xi_\star, |\xi|=t}~ \frac{\Pi(\widetilde{\Theta}|\mbf{X},\mbf{Y},\xi)}{\Pi(\widetilde{\Theta}|\mbf{X},\mbf{Y},\xi_\star)} \le 2\left(\sqrt{\frac{2\pi}{n\lambda}} \times \sup_{z} h_1(z) \times p^{1+\eta}\right)^{t-s}$$
for any $t=s+1,\dots,(K+1)s$, for sufficiently large $n > N$ (where $N$ does not depends on $t$).
\end{lemma}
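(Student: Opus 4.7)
The plan is to write both integrals explicitly, integrate out the ``extra'' coordinates $\bm{\beta}_{\xi_1}:=\bm{\beta}_{\xi\setminus\xi_\star}$ of dimension $t-s$, and compare the resulting expressions on $\Omega(\eta)^c$. Write $\xi=\xi_\star\sqcup\xi_1$. Both the numerator and the denominator are integrals (over the appropriate slice of $\widetilde{\Theta}$) of
\[
\mc{N}(\mbf{Y}|\mbf{X}\bm{\beta},\sigma^2\mbf{I})\,g(\sigma^2)\prod_{j\in\xi_\star}\tfrac{h_1(\beta_j/\sigma)}{\sigma}\cdot(\text{prior on }\bm{\beta}_{\xi_1})\cdot\prod_{j\in\xi^c}\tfrac{h_0(\beta_j/\sigma)}{\sigma}.
\]
The only essential differences are (i) the prior on $\bm{\beta}_{\xi_1}$, which is the slab $h_1$ in the numerator but the spike $h_0$ in the denominator, and (ii) the constraint on $\bm{\beta}_{\xi_1}$ imposed by $\widetilde{\Theta}$.

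For the numerator I would use that on $\widetilde{\Theta}|_\xi$ the constraint $\|\bm{\beta}_\xi-\bm{\beta}_\xi^\star\|\le M_2\sigma_\star\epsilon_n/\sqrt{\lambda}$ confines $\bm{\beta}_{\xi_1}$ to a ball (since $\bm{\beta}_{\xi_1}^\star=\mbf{0}$), so $\prod_{j\in\xi_1}h_1(\beta_j/\sigma)\le(\sup_z h_1(z))^{t-s}$; then relax the $\bm{\beta}_{\xi_1}$ integration to all of $\mbb{R}^{t-s}$ and perform the Gaussian integral. Under the unit-Jacobian reparameterization $\bm{\alpha}=\bm{\beta}_{\xi_\star}+(\mbf{X}_{\xi_\star}^\T\mbf{X}_{\xi_\star})^{-1}\mbf{X}_{\xi_\star}^\T\mbf{X}_{\xi_1}\bm{\beta}_{\xi_1}$, the Gaussian mean decomposes as $\mbf{X}_{\xi_\star}\bm{\alpha}+\mbf{W}\bm{\beta}_{\xi_1}+\mbf{X}_{\xi^c}\bm{\beta}_{\xi^c}$ with $\mbf{W}:=(\mbf{I}-\mbf{P}_{\xi_\star})\mbf{X}_{\xi_1}$ orthogonal to $\text{col}(\mbf{X}_{\xi_\star})$, and the projector onto $\text{col}(\mbf{W})$ equals $\mbf{P}_\xi-\mbf{P}_{\xi_\star}$; the $\bm{\beta}_{\xi_1}$-integral then produces the factor $\sqrt{(2\pi\sigma^2)^{t-s}/\det(\mbf{W}^\T\mbf{W})}\cdot\exp(\|(\mbf{P}_\xi-\mbf{P}_{\xi_\star})\widetilde{\mbf{r}}\|^2/(2\sigma^2))$, where $\widetilde{\mbf{r}}:=\mbf{Y}-\mbf{X}_{\xi^c}\bm{\beta}_{\xi^c}$. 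The Schur-complement identity $\det(\mbf{W}^\T\mbf{W})=\det(\mbf{X}_\xi^\T\mbf{X}_\xi)/\det(\mbf{X}_{\xi_\star}^\T\mbf{X}_{\xi_\star})$ combined with Assumption~\ref{asm2} gives $\det(\mbf{W}^\T\mbf{W})\ge(n\lambda)^{t-s}$. For the denominator, Assumption~\ref{asm1}(c) ensures that the constraint $|\beta_j|\le\sigma z_{0n}$ for each $j\in\xi_1$ retains a $(1-e^{-n})^{t-s}$ fraction of the spike mass; in that tiny region $\|\mbf{X}_{\xi_1}\bm{\beta}_{\xi_1}\|\prec 1$, so the Gaussian likelihood equals its $\bm{\beta}_{\xi_1}=\mbf{0}$ value up to an $e^{o(1)}$ factor. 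The common outer integrals over $(\sigma^2,\bm{\alpha},\bm{\beta}_{\xi^c})$ therefore cancel in the ratio, leaving $(\sup_z h_1(z))^{t-s}\,(2\pi/(n\lambda))^{(t-s)/2}\,\sup_{\sigma}\exp(\|(\mbf{P}_\xi-\mbf{P}_{\xi_\star})\widetilde{\mbf{r}}\|^2/(2\sigma^2))$ times an $(1-e^{-n})^{-(t-s)}e^{o(1)}\le 2$ correction, uniformly in $t\le(K+1)s$.

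To control the remaining exponential, I use $\xi\supseteq\xi_\star$ to obtain $(\mbf{P}_\xi-\mbf{P}_{\xi_\star})\mbf{X}\bm{\beta}^\star=\mbf{0}$, hence $(\mbf{P}_\xi-\mbf{P}_{\xi_\star})\mbf{Y}=\sigma_\star(\mbf{P}_\xi-\mbf{P}_{\xi_\star})\bm{\varepsilon}$, while the residual perturbation $\|(\mbf{P}_\xi-\mbf{P}_{\xi_\star})\mbf{X}_{\xi^c}\bm{\beta}_{\xi^c}\|\prec\sigma\sqrt{\log p}$ (via $pz_{0n}\prec\sqrt{\log p/n}$ from Assumption~\ref{asm1}(c)) is negligible. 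On $\Omega(\eta)^c$, Lemma~\ref{lem:omega} gives $\|(\mbf{P}_\xi-\mbf{P}_{\xi_\star})\bm{\varepsilon}\|^2\le(2+\eta)(t-s)\log p$; combined with $\sigma^2/\sigma_\star^2\ge 1-o(1)$ on $\widetilde{\Theta}$, this yields $\|(\mbf{P}_\xi-\mbf{P}_{\xi_\star})\widetilde{\mbf{r}}\|^2/(2\sigma^2)\le(1+\eta)(t-s)\log p$ for $n$ sufficiently large, uniformly in $\xi$ of size $\le(K+1)s$, so the exponential is at most $p^{(1+\eta)(t-s)}$ and the claimed bound follows.

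The main obstacle is a bookkeeping one: the reparameterization couples $\bm{\alpha}$ with $\bm{\beta}_{\xi_1}$ inside the slab factor on $\bm{\beta}_{\xi_\star}=\bm{\alpha}-(\mbf{X}_{\xi_\star}^\T\mbf{X}_{\xi_\star})^{-1}\mbf{X}_{\xi_\star}^\T\mbf{X}_{\xi_1}\bm{\beta}_{\xi_1}$, and the joint constraint $\|\bm{\beta}_\xi-\bm{\beta}_\xi^\star\|\le M_2\sigma_\star\epsilon_n/\sqrt{\lambda}$ on $\widetilde{\Theta}|_\xi$ does not factor over $(\bm{\beta}_{\xi_\star},\bm{\beta}_{\xi_1})$. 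I would resolve both by relaxing the numerator's integration domain to a larger product set so that its outer integrand pointwise dominates that of the denominator, which is precisely where the factor of $2$ and the ``sufficiently large $n>N$'' qualifier in the statement get absorbed.
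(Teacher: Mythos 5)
Your overall strategy coincides with the paper's: bound the slab density on the $t-s$ extra coordinates by $\sup_z h_1(z)$, compare the $t$-dimensional and $s$-dimensional Gaussian integrals via the Schur-complement determinant identity $\det(\mbf{X}_{\xi\setminus\xi_\star}^\T(\mbf{I}-\mbf{P}_{\xi_\star})\mbf{X}_{\xi\setminus\xi_\star})=\det(\mbf{X}_\xi^\T\mbf{X}_\xi)/\det(\mbf{X}_{\xi_\star}^\T\mbf{X}_{\xi_\star})\ge(n\lambda)^{t-s}$, control $\exp\{\|(\mbf{P}_\xi-\mbf{P}_{\xi_\star})\mbf{Y}\|^2/(2\sigma^2)\}\le p^{(1+\eta)(t-s)}$ on $\Omega(\eta)^c$, and absorb the spike-mass deficit $\int_{\mc{B}/\sigma}h_0\ge 1-e^{-n}$ into the factor $2$. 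Those pieces are all correct and match the paper's steps (ii) and (iii).

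The genuine gap is the step you flag as a ``bookkeeping'' obstacle and propose to resolve by domain relaxation: it cannot be resolved that way. After your reparameterization the numerator carries the factor $h_1\bigl((\bm{\alpha}-\mbf{M}\bm{\beta}_{\xi\setminus\xi_\star})/\sigma\bigr)$ with $\mbf{M}=(\mbf{X}_{\xi_\star}^\T\mbf{X}_{\xi_\star})^{-1}\mbf{X}_{\xi_\star}^\T\mbf{X}_{\xi\setminus\xi_\star}$, while the denominator carries $h_1(\bm{\beta}_{\xi_\star}/\sigma)$ evaluated at the unshifted point; the shift $\mbf{M}\bm{\beta}_{\xi\setminus\xi_\star}$ has $\ell_2$-norm up to order $s\epsilon_n/\lambda^{3/2}$, comparable to the radius of the ball over which $\bm{\beta}_{\xi_\star}$ ranges, so these two slab values are evaluated at genuinely different arguments. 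Enlarging the integration domain does not produce pointwise domination here, because the issue is the value of the density, not the region of integration: for a generic $h_1$ satisfying only Assumption~\ref{asm1}(d) and having finite supremum, the ratio of $h_1$ at two points separated by $\asymp\sqrt{s}\,\epsilon_n$ can be unbounded (take $h_1$ with a narrow spike). The paper closes exactly this hole by bounding the numerator's slab factor over $\xi_\star$ by $\sup_{\bm{z}\in\mc{C}/\sigma}h_1$ and the denominator's by $\inf_{\bm{z}\in\mc{C}'/\sigma}h_1$, and then invoking condition~(a) of Theorem~\ref{thm3} --- the $L_n$-Lipschitz continuity of $\log h_1$ with $sL_n\prec\sqrt{n\log p}$ --- to show this sup/inf ratio is at most $e^{2M_2L_n\sqrt{s}\sigma_\star\epsilon_n/\sigma}=p^{o(1)}$, which is then absorbed into $p^{(1+\eta)(t-s)}$. (The lemma's hypothesis list omits condition~(a), which is an inconsistency in the paper itself, but the bound is not provable for a generic slab without some such modulus-of-continuity control.) To complete your argument you must import condition~(a) and insert this Lipschitz comparison; without it the claimed cancellation of the ``common outer integrals'' fails.
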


\begin{proof}[Proof of Theorem \ref{thm3}]
From Lemma \ref{lem:ratio} and condition (b), it follows that
\begin{align*}
& \sum_{\gamma \in \mc{F}:~\gamma \supseteq \xi_\star, |\gamma|=t} \frac{\Pi(\widetilde{\Theta} \cap \{\xi = \gamma\}|\mbf{X},\mbf{Y})}{\Pi(\widetilde{\Theta} \cap \{\xi = \xi_\star\}|\mbf{X},\mbf{Y})}\\
&= \sum_{\gamma \in \mc{F}: \gamma \supseteq \xi_\star, |\xi|=t} \frac{\pi(\gamma)}{\pi(\xi_\star)} \times \frac{\Pi(\widetilde{\Theta}|\mbf{X},\mbf{Y},\xi = \gamma)}{\Pi(\widetilde{\Theta}|\mbf{X},\mbf{Y},\xi = \xi_\star)}\\
&\lesssim \frac{\pi(|\xi|=t)}{\pi(\xi_\star)} \times \left(\sqrt{\frac{2\pi}{n\lambda}} \times \sup_{z} h_1(z) \times p^{1+\eta}\right)^{t-s}\\
&\le r_n^{t-s}
\end{align*}
Then
$$\sum_{\substack{\gamma \in \mc{F}: ~\gamma \supseteq \xi_\star,\\ s+1 \le |\gamma| \le (K+1)s}} \frac{\Pi(\widetilde{\Theta} \cap \{\xi = \gamma\}|\mbf{X},\mbf{Y})}{\Pi(\widetilde{\Theta} \cap \{\xi = \xi_\star\}|\mbf{X},\mbf{Y})} \lesssim \sum_{t=s+1}^{(K+1)s} r_n^{t-s} \le \frac{r_n}{1-r_n}.$$
Combining this result with Theorem \ref{thm2} concludes the proof.
\end{proof}

\section{Discussion}\label{sec5}

In this paper, we identify a class of generic spike-and-slab priors and then develop a unified theoretical framework to analyze these spike-and-slab methodologies. Comparing with the literature, we characterize the weakest  conditions to guarantee near optimal posterior contraction rate and consistent model selection property. Our conditions and results are general and include previous works as special cases. 

\section*{Acknowledgement}
We would like to thank Professor Edward George, Professor Faming Liang, Professor Qifan Song and Professor Jianqing Fan for helpful discussions in the initial stage of this project. 


\bibliographystyle{ims}
\bibliography{ref}


\newpage
\appendix 
\renewcommand{\theequation}{S.\arabic{equation}}
\renewcommand{\thetable}{S.\arabic{table}}
\renewcommand{\thefigure}{S.\arabic{figure}}
\renewcommand{\thesection}{S.\arabic{section}}
\renewcommand{\thelemma}{S.\arabic{lemma}}

\vspace{30pt}
\noindent{\bf \LARGE Appendix}
\vspace{-10pt}

\section{Proofs for Example \ref{example:bernoulli} and Example \ref{example:csv}}\label{appendix:A}
We collect proofs for Example \ref{example:bernoulli} and Example \ref{example:csv} by showing both the Bernoulli prior and the CSV priors satisfy Assumption \ref{asm1} (b).  

\begin{proof}[Proof for Example \ref{example:bernoulli}]
It is elementary that
\$
\pi(\emptyset) = \left(1-\frac{1}{p}\right)^p \to e^{-1}
\$
as $p \to \infty$. Next, let $T \sim \texttt{Binomial}(p,1/p)$. Due to Lemma \ref{lem:pelekis},
\$
\sum_{\xi:|\xi|>t} \pi(\xi) &= \P( T > t) \le \P(T \ge t+1)\\
&\le \frac{(1/p)^{2(t+1)}}{2} \times \left. {p \choose t+1} \right/ {t+1 \choose t+1} \le p^{-t}.
\$
Finally,
\$
\pi(\xi_\star) &= \left(\frac{1}{p}\right)^s \left(1 - \frac{1}{p}\right)^{p-s} \gtrsim p^{-s} \times \frac{e^{-1}}{2}.
\$
\end{proof}

\begin{proof}[Proof for Example \ref{example:csv}]
Recall that both $p = p_n$ and $s = s_n$ are assumed to be sequences of $n$. First, find large integer $N_1$ such that, for any $n > N_1$, $B_2p^{-B_4} \le 1/2$. Then
\$
1 = \sum_{t=0}^p w(t) \le w(0) \left(1+ B_2p^{-B_4} + B_2^2p^{-2B_4} + \dots \right) \le \frac{w(0)}{1-B_2p^{-B_4}} \le 2 w(0)
\$
implying $w(0) \ge 1/2$. Next, find large integer $N_2$ such that, for any $n > N_2$, $\log(2B_2) < (B_4-A_2)\log p$. Then, for any $n > \max\{N_1,N_2\}$ and any $t \ge 0$,
\$
\sum_{\xi:|\xi|>t} \pi(\xi) &\le \sum_{j=t}^p w(j) \leq w(t)\left(1+ B_2p^{-B_4} + B_2^2p^{-2B_4} + \dots \right)\\
&\le \frac{w(t)}{1 - B_2p^{-B_4}} \le 2w(t) \le 2w(0) B_2^t p^{-B_4t} \le (2B_2)^t p^{-B_4t} \le p^{-A_2t}.
\$
Third, find large integer $N_3$ such that, for any $n > N_3$, $\log(B_3/2) < (A_1 - B_3 - 1)\log p$. Then, for any $n > \max\{N_1,N_3\}$,
\$
\pi(\xi_*)&= w(s) {p\choose s}^{-1} \ge w(0)B_1^s p^{-B_3s} {p\choose s}^{-1}\\
&\ge w(0) B_1^s p^{-B_3 s} p^{-s} \ge (B_1/2)^s p^{-(B_3+1)s}
\ge p^{-A_1s}
\$
Therefore, $N = \max\{N_1,N_2,N_3\}$ is the desideratum.
\end{proof}

\section{Proofs of Technical Lemmas}

\begin{proof}[Proof of Lemma \ref{lem:eigenvalues}]
The first inequality is trivial. The second inequality follows from the facts that
$$\lambda_{\min}(\mbf{X}_{\xi}^\T\mbf{X}_{\xi}) = \inf_{\bm{\beta}}\left\{\frac{\bm{\beta}^\T\mbf{X}^\T\mbf{X}\bm{\beta}}{\Vert \bm{\beta}\Vert^2}: \bm{\beta}_{\xi^c} = \mbf{0}, \bm{\beta}_{\xi} \ne \mbf{0}\right\}$$
and that
$$\bm{\beta}_{\xi^c} = \mbf{0}, \bm{\beta}_{\xi} \ne \mbf{0} \Longrightarrow \bm{\beta} \ne \mbf{0}, \Vert \bm{\beta}_{\xi^c} \Vert_1 \le \alpha \Vert \bm{\beta}_{\xi}\Vert_1.$$
For the third inequality, it suffices to show upon the identifiability condition that any full-rank model $\xi$ such that $|\xi \cup \xi_\star| \le t$ results in a full-rank model $\xi \cup \xi_\star$. This is obvious for cases of underfitted models $\xi \subseteq \xi_\star$. For other cases, we prove by contradiction. Suppose for the sake of contradiction that $\xi \cup \xi_\star$ with $\xi \not \subseteq \xi_\star$ is not of full rank. We construct a vector basis of model $\xi_\star \cup \xi$ by merging all vectors of $\xi \setminus \xi_\star$ and some selected vectors of $\xi_\star$. Let $\gamma \subseteq \xi_\star$ denote the index set of the selected vectors. We must have $\gamma \subset \xi_\star$ since $\xi \cup \xi_\star$ is not of full rank. Therefore, $(\xi \setminus \xi_\star) \uplus \gamma \not \supseteq \xi_\star$, but $(\mbf{I} - \mbf{P}_{(\xi \setminus \xi_\star) \uplus \gamma})\mbf{X}_{\xi_\star} = \mbf{0}$, which contradict to the premise. 
\end{proof}

\begin{proof}[Proof of Lemma \ref{lem:prior}]
Due to Assumption \ref{asm1}(b),
\begin{align*}
\Pi(|\xi \setminus \xi_\star| > Ks)
&=\frac{\sum_{\xi \in \mathcal{F}:~|\xi \setminus \xi_\star| > Ks} \pi(\xi)}{\sum_{\xi \in \mathcal{F}} \pi(\xi)}\\
&\le \frac{\sum_{\xi:~|\xi| > Ks} \pi(\xi)}{\pi(\emptyset)}\\
&\lesssim p^{-[A_2K-o(1)]s}.
\end{align*}
Due to Assumption \ref{asm1}(c), for any $\xi \subseteq \{1,\dots,p\}$ any $\sigma > 0$,
$$\Pi\left(\left. \max_{j \not \in \xi} |\beta_j| > \sigma z_{0n} \right|\xi, \sigma\right) \le p \int 1\left\{|z| > z_{0n} \right\} h_0(z)dz \le e^{- n + \log p}.$$
Putting these pieces together completes the proof.
\end{proof}

\begin{proof}[Proof of Lemma \ref{lem:test1}, part(a)]
Write
\begin{align*}
\phi_1 &= 1\left\{ \max_{\xi \in \mc{F}:~|\xi \setminus \xi_\star| \le Ks} \left| \frac{\bm{\varepsilon}^\T (\mbf{I}-\mbf{P}_{\xi \cup \xi_\star})\bm{\varepsilon}}{n} - 1\right| > M_1\epsilon_n \right\}\\
&\le 1\left\{ \max_{\xi:~|\xi \setminus \xi_\star| \le Ks} \left| \frac{\bm{\varepsilon}^\T (\mbf{I}-\mbf{P}_{\xi \cup \xi_\star})\bm{\varepsilon}}{n} - 1\right| > M_1\epsilon_n \right\}.
\end{align*}
Since projection matrices $\mbf{P}_{\xi_1 \cup \xi_\star} \le \mbf{P}_{\xi_2 \cup \xi_\star}$ for nested models $\xi_1 \subseteq \xi_2$, the quantity $\bm{\varepsilon}^\T (\mbf{I}-\mbf{P}_{\xi \cup \xi_\star})  \bm{\varepsilon}$ achieves its minimum value at some $\xi$ of size $Ks$ and its maximum value at $\xi = \emptyset$. 
\begin{align*}
\phi_1 &\le 1\left\{ \max_{\xi:~|\xi| = Ks ~\text{or}~0} \left| \frac{\bm{\varepsilon}^\T (\mbf{I}-\mbf{P}_{\xi \cup \xi_\star}  \bm{\varepsilon})}{n} - 1\right| > M_1\epsilon_n \right\}\\
&\le \sum_{\xi: ~|\xi| = Ks ~\text{or}~0} 1\left\{ \left| \frac{\bm{\varepsilon}^\T (\mbf{I}-\mbf{P}_{\xi \cup \xi_\star}  \bm{\varepsilon})}{n} - 1\right| > M_1\epsilon_n \right\}.
\end{align*}
For each $\xi$ of size $Ks$ or $0$, write
\begin{align*}
&~~~\Pn \left( \left| \frac{\bm{\varepsilon}^\T (\mbf{I}-\mbf{P}_{\xi \cup \xi_\star}) \bm{\varepsilon}}{n} - 1\right| > M_1\epsilon_n \right)\\
&\le \P \left( \frac{\chi^2_{n - s}}{n} > 1 + M_1\epsilon_n \right) +  \P \left( \frac{\chi^2_{n - (K+1)s}}{n} < 1 - M_1\epsilon_n \right).
\end{align*}
Putting the last two displays together with the probability bound of the chi-square distribution (Lemma \ref{lem:chi2}, part (b)) yields
\begin{align*}
\En \phi_1 &\le \left[1 + {p \choose Ks} \right] \times e^{-[M_1^2/8 - o(1)]n\epsilon_n^2}\le e^{-[M_1^2/8 - K - o(1)]n\epsilon_n^2}.
\end{align*}
\end{proof}

\begin{proof}[Proof of Lemma \ref{lem:test1}, part(b)]
Define
\begin{align*}
\phi_{1,\gamma} &= 1\left\{\left|\frac{\mbf{Y}^\T(\mbf{I} - \mbf{P}_{\gamma \cup \xi_\star})\mbf{Y}}{n\sigma_\star^2} -1 \right| \le M_1\epsilon_n\right\},\\
\Theta_{1,\gamma} &= \{(\sigma, \xi, \bm{\beta}) \in \Theta_1: \xi = \gamma\}.
\end{align*}
Then
\begin{align*}
\phi_1 &= \max_{\gamma \in \mc{F}:~|\gamma \setminus \xi_\star| \le Ks} ~\phi_{1,\gamma}, ~~\Theta_1 = \cup_{\gamma \in \mc{F}: |\gamma \setminus \xi_\star| \le Ks} ~\Theta_{1,\gamma}.
\end{align*}
Applying Lemma \ref{lem:split} yields
\begin{align*}
\sup_{(\sigma, \xi, \bm{\beta}) \in \Theta_1}\Ea(1 - \phi_1)
&\le \max_{\gamma  \in \mc{F}:~|\gamma\setminus\xi_\star| \le Ks} ~\sup_{(\sigma, \xi, \bm{\beta}) \in \Theta_{1,\gamma}} \Ea (1-\phi_{1,\gamma})\\
&= \sup_{(\sigma, \xi, \bm{\beta}) \in \Theta_1} \Ea (1-\phi_{1,\xi}). 
\end{align*}
We proceed to bound, for any $(\sigma, \xi, \bm{\beta}) \in \Theta_1$, 
$$\Ea (1-\phi_{1,\xi}) = \Pa \left(\left|\frac{\mbf{Y}^\T(\mbf{I} - \mbf{P}_{\xi \cup \xi_\star})\mbf{Y}}{n\sigma_\star^2} -1 \right| \le M_1\epsilon_n\right).$$
To this end, the restriction $\frac{\sigma^2}{\sigma_\star^2} \not \in \left[\frac{1-M_1\epsilon_n}{1+M_1\epsilon_n}, \frac{1+M_1\epsilon_n}{1-M_1\epsilon_n}\right]$ of $\Theta_1 \subseteq \Theta_0^c$ implies that
$$\left|\frac{\mbf{Y}^\T(\mbf{I} - \mbf{P}_{\xi \cup \xi_\star})\mbf{Y}}{n\sigma_\star^2} -1 \right| \le M_1\epsilon_n \Longrightarrow \left|\frac{\mbf{Y}^\T(\mbf{I} - \mbf{P}_{\xi \cup \xi_\star})\mbf{Y}}{n\sigma^2} - 1\right| > M_1\epsilon_n.$$
Another restriction $\max_{j \not \in \xi} |\beta_j| \le \sigma z_{0n}$ of $\Theta_1 \subseteq \Theta_0^c$ implies that $\Vert \mbf{X}_{\xi^c}\bm{\beta}_{\xi^c}\Vert \le  \sigma \sqrt{n} p z_{0n}$. As we will show later, under $\Pa$,
\begin{equation} \label{todo}
\left|\frac{\mbf{Y}^\T(\mbf{I} - \mbf{P}_{\xi \cup \xi_\star})\mbf{Y}}{n\sigma^2} - 1\right| > M_1\epsilon_n \Longrightarrow \left| \frac{\bm{\varepsilon}^\T (\mbf{I} - \mbf{P}_{\xi \cup \xi_\star}) \bm{\varepsilon}}{n} - 1 \right|> M_1\epsilon_n - 3pz_{0n}.
\end{equation}
Then
\begin{align*}
&~~~\Pa (1-\phi_{1,\xi}) \le \P\left( \left| \frac{\bm{\varepsilon}^\T (\mbf{I} - \mbf{P}_{\xi \cup \xi_\star}) \bm{\varepsilon}}{n} - 1 \right|> (M_1\epsilon_n-3pz_{0n})\right)\\
& \le  \P \left( \frac{\chi^2_{n-s}}{n} > 1 + (M_1\epsilon_n-3pz_{0n})\right) + \P \left( \frac{\chi^2_{n-(K+1)s}}{n} < 1 -(M_1\epsilon_n-3pz_{0n})\right).
\end{align*}
Note that this bound holds uniformly for all $(\sigma, \xi, \bm{\beta}) \in \Theta_1$, and that Assumption \ref{asm1}(c) derives that $pz_{0n}/\epsilon_n \to 0$. Therefore, the probability bound of the chi-square distribution (Lemma \ref{lem:chi2}, part (a)) concludes the proof.

It is only left to show \eqref{todo}. For simplicity of notation, we write $\mbf{R} = \mbf{I} - \mbf{P}_{\xi \cup \xi_\star}$, $\mbf{b} = \mbf{X}_{\xi^c}\bm{\beta}_{\xi^c}/\sigma\sqrt{n}$, then $\mbf{Y}^\T(\mbf{I} - \mbf{P}_{\xi \cup \xi_\star})\mbf{Y}/n\sigma^2 = \Vert \mbf{R}(\bm{\varepsilon} + \mbf{b})\Vert^2$ under $\Pa$. Since
$$\Vert \mbf{R}\bm{\varepsilon}\Vert^2 - 2 \Vert \mbf{R}\bm{\varepsilon}\Vert \Vert \mbf{b} \Vert \le \Vert \mbf{R}(\bm{\varepsilon}+\mbf{b}) \Vert^2 \le (\Vert \mbf{R}\bm{\varepsilon}\Vert + \Vert \mbf{b} \Vert)^2,$$
we have for small $\Vert \mbf{b}\Vert$,
\begin{align*}
\Vert \mbf{R}(\bm{\varepsilon}+\mbf{b}) \Vert^2  > 1 + M_1\epsilon_n
&\Longrightarrow \Vert \mbf{R}\bm{\varepsilon}\Vert^2 > \left(\sqrt{1 + M_1\epsilon_n} - \Vert \mbf{b} \Vert\right)^2\\
&\Longrightarrow \Vert \mbf{R}\bm{\varepsilon}\Vert^2 > 1 + M_1\epsilon_n - 3\Vert \mbf{b} \Vert
\end{align*}
and
\begin{align*}
\Vert \mbf{R}(\bm{\varepsilon}+\mbf{b}) \Vert^2  < 1 - M_1\epsilon_n
&\Longrightarrow \Vert \mbf{R}\bm{\varepsilon}\Vert^2 < \left(\Vert \mbf{b} \Vert + \sqrt{\Vert \mbf{b} \Vert^2 + 1 - M_1\epsilon_n}\right)^2\\
&\Longrightarrow \Vert \mbf{R}\bm{\varepsilon}\Vert^2 < 1 - M_1\epsilon_n + 3\Vert \mbf{b} \Vert.
\end{align*}
\end{proof}

\begin{proof}[Proof of Lemma \ref{lem:test2}, part (a)]
For any $\xi \in \mc{F}$ such that $|\xi \setminus \xi_\star| \le Ks$, write
\begin{align*}
\frac{\Vert \mbf{X}_{\xi \cup \xi_\star}^\dagger \mbf{Y} - \bm{\beta}^\star_{\xi \cup \xi_\star} \Vert^2}{\sigma_\star^2}
&= \bm{\varepsilon}^\T \mbf{X}_{\xi \cup \xi_\star} (\mbf{X}_{\xi \cup \xi_\star}^\T \mbf{X}_{\xi \cup \xi_\star})^{-2}  \mbf{X}_{\xi \cup \xi_\star}^\T \bm{\varepsilon}\\
&\le \frac{\bm{\varepsilon}^\T \mbf{X}_{\xi \cup \xi_\star} (\mbf{X}_{\xi \cup \xi_\star}^\T \mbf{X}_{\xi \cup \xi_\star})^{-1}  \mbf{X}_{\xi \cup \xi_\star}^\T \bm{\varepsilon}}{\lambda_{\min}(\mbf{X}_{\xi\cup\xi^\star}^\T\mbf{X}_{\xi\cup\xi_\star})} \le \frac{\bm{\varepsilon}^\T \mbf{P}_{\xi \cup \xi_\star}  \bm{\varepsilon}}{n\lambda},
\end{align*}
implying
\begin{align*}
\phi_2 &\le 1\left\{ \max_{\xi \in \mc{F}:~|\xi\setminus\xi_\star| \le Ks} \bm{\varepsilon}^\T \mbf{P}_{\xi \cup \xi_\star}  \bm{\varepsilon} > M_2^2n\epsilon_n^2/4 \right\}\\
&\le 1\left\{ \max_{\xi:~|\xi\setminus\xi_\star| \le Ks} \bm{\varepsilon}^\T \mbf{P}_{\xi \cup \xi_\star}  \bm{\varepsilon} > M_2^2n\epsilon_n^2/4 \right\}.
\end{align*}
Since projection matrices $\mbf{P}_{\xi_1 \cup \xi_\star} \le \mbf{P}_{\xi_2 \cup \xi_\star}$ for nested models $\xi_1 \subseteq \xi_2$, the quantity $\bm{\varepsilon}^\T \mbf{P}_{\xi \cup \xi_\star}  \bm{\varepsilon}$ achieves its maximum value at some $\xi$ of size $Ks$.
\begin{align*}
\phi_2 \le 1\left\{ \max_{\xi:~|\xi| = Ks} \bm{\varepsilon}^\T \mbf{P}_{\xi \cup \xi_\star}  \bm{\varepsilon} > M_2^2n\epsilon_n^2/4 \right\}\\
\le \sum_{\xi:~|\xi| = Ks} 1\left\{\bm{\varepsilon}^\T \mbf{P}_{\xi \cup \xi_\star}  \bm{\varepsilon} > M_2^2n\epsilon_n^2/4 \right\}.
\end{align*}
For each $\xi$ of size $Ks$, we note that $\rank{\xi \cup \xi_\star} \le (K+1)s$ and write
\begin{align*}
\Pn\left(\bm{\varepsilon}^\T \mbf{P}_{\xi \cup \xi_\star}  \bm{\varepsilon} > M_2^2n\epsilon_n^2/4 \right)
&\le \P\left(\chi^2_{(K+1)s} > M_2^2n\epsilon_n^2/4 \right).
\end{align*}
Putting the last two displays together with the probability bound of the chi-square distribution (Lemma \ref{lem:chi2}, part (b)) yields
\begin{align*}
\En \phi_2 &\le {p \choose Ks} \times e^{-[M_2^2/8 - o(1)]n\epsilon_n^2}\le e^{-[M_2^2/8 - K - o(1)]n\epsilon_n^2}.
\end{align*}
\end{proof}

\begin{proof}[Proof of Lemma \ref{lem:test2}, part(b)]
Define
\begin{align*}
\phi_{2,\gamma} &= 1\left\{ \Vert \mbf{X}_{\gamma \cup \xi_\star}^\dagger \mbf{Y} - \bm{\beta}^\star_{\gamma \cup \xi_\star} \Vert  > \frac{M_2 \sigma_\star\epsilon_n}{2\sqrt{\lambda}}\right\},\\
\Theta_{2,\gamma} &= \{(\sigma, \xi, \bm{\beta}) \in \Theta_2: \xi = \gamma\},
\end{align*}
then
\begin{align*}
\phi_2 &= \max_{\gamma  \in \mc{F}:~|\gamma\setminus\xi_\star| \le Ks}~\phi_{2,\gamma}\\
\Theta_2 &= \cup_{\gamma  \in \mc{F}:~|\gamma\setminus\xi_\star| \le Ks}~\Theta_{2,\gamma}.
\end{align*}
Applying Lemma \ref{lem:split} yields
\begin{align*}
\sup_{(\sigma, \xi, \bm{\beta}) \in \Theta_2}\Ea(1 - \phi_2)
&\le \max_{\gamma  \in \mc{F}:~|\gamma\setminus\xi_\star| \le Ks} ~\Ea (1-\phi_{2,\gamma})\\
&= \sup_{(\sigma, \xi, \bm{\beta}) \in \Theta_2} ~\Ea (1-\phi_{2,\xi}). 
\end{align*}
We proceed to bound, for any $(\sigma, \xi, \bm{\beta}) \in \Theta_2$, 
\begin{align*}
\Ea (1-\phi_{2,\xi}) &= \Pa \left(\Vert \mbf{X}_{\xi \cup \xi_\star}^\dagger \mbf{Y} - \bm{\beta}^\star_{\xi \cup \xi_\star} \Vert  \le \frac{M_2 \sigma_\star\epsilon_n}{2\sqrt{\lambda}}\right). 
\end{align*}
Under $\Pa$, restrictions $\frac{\sigma^2}{\sigma_\star^2} \not \in \left[\frac{1-M_1\epsilon_n}{1+M_1\epsilon_n}, \frac{1+M_1\epsilon_n}{1-M_1\epsilon_n}\right]$ and $\max_{j \not \in \xi} |\beta_j| \le \sigma z_{0n}$ of $\Theta_1 \subseteq \Theta_0^c$ imply 
\begin{align*}
&~~~\Vert \mbf{X}_{\xi \cup \xi_\star}^\dagger \mbf{Y} - \bm{\beta}^\star_{\xi \cup \xi_\star} \Vert  \le \frac{M_2 \sigma_\star\epsilon_n}{2\sqrt{\lambda}}\\
&\Longrightarrow \Vert \bm{\beta}_{\xi \cup \xi_\star} - \bm{\beta}^\star_{\xi \cup \xi_\star} + \mbf{X}_{\xi \cup \xi_\star}^\dagger ( \mbf{X}_{\xi^c \cap \xi_\star^c} \bm{\beta}_{\xi^c \cap \xi_\star^c} +\sigma\bm{\varepsilon}) \Vert  \le \frac{M_2 \sigma_\star\epsilon_n}{2\sqrt{\lambda}}\\
&\Longrightarrow \Vert \mbf{X}_{\xi \cup \xi_\star}^\dagger ( \mbf{X}_{\xi^c \cap \xi_\star^c} \bm{\beta}_{\xi^c \cap \xi_\star^c} +\sigma\bm{\varepsilon}) \Vert  \ge \Vert \bm{\beta} - \bm{\beta}^\star \Vert -  \Vert \bm{\beta}_{\xi^c} \Vert - \frac{M_2 \sigma_\star\epsilon_n}{2\sqrt{\lambda}}\\
&\Longrightarrow \frac{1}{\sqrt{n\lambda}}\Vert \mbf{P}_{\xi \cup \xi_\star} ( \mbf{X}_{\xi^c \cap \xi_\star^c} \bm{\beta}_{\xi^c \cap \xi_\star^c} +\sigma\bm{\varepsilon}) \Vert \ge \Vert \bm{\beta}  - \bm{\beta}^\star \Vert - \Vert \bm{\beta}_{\xi^c} \Vert - \frac{M_2 \sigma_\star\epsilon_n}{2\sqrt{\lambda}}\\
&\Longrightarrow \frac{1}{\sqrt{n\lambda}} \left(\sqrt{np} \Vert \bm{\beta}_{\xi^c}\Vert + \sigma \Vert \mbf{P}_{\xi \cup \xi_\star} \bm{\varepsilon}\Vert \right) \ge \Vert \bm{\beta} - \bm{\beta}^\star \Vert - \Vert \bm{\beta}_{\xi^c} \Vert - \frac{M_2 \sigma_\star\epsilon_n}{2\sqrt{\lambda}}\\
&\Longrightarrow \Vert \mbf{P}_{\xi \cup \xi_\star} \bm{\varepsilon} \Vert \ge \left(\frac{M_2}{2} \sqrt{\frac{1-M_1\epsilon_n}{1+M_1\epsilon_n}}  - \frac{pz_{0n}}{\epsilon_n} - \frac{pz_{0n}\sqrt{\lambda}}{\epsilon_n\sqrt{p}}\right) \sqrt{n}\epsilon_n\\
&\Longrightarrow \Vert \mbf{P}_{\xi \cup \xi_\star} \bm{\varepsilon} \Vert \ge \left(\frac{M_2}{2} \sqrt{\frac{1-M_1\epsilon_n}{1+M_1\epsilon_n}}  - \frac{2pz_{0n}}{\epsilon_n}\right)\sqrt{n}\epsilon_n.
\end{align*}
Thus,
$$\Pa (1-\phi_{2,\xi}) \le \P \left( \chi^2_{(K+1)s} \ge \left(\frac{M_2}{2} \sqrt{\frac{1-M_1\epsilon_n}{1+M_1\epsilon_n}}  - \frac{2pz_{0n}}{\epsilon_n}\right)^2 n\epsilon_n^2 \right).$$
Note that this bound holds uniformly for all $(\sigma, \xi, \bm{\beta}) \in \Theta_2$. Assumption \ref{asm1}(c) deriving that $pz_{0n}/\epsilon_n \to 0$ and the probability bound of the chi-square distribution (Lemma \ref{lem:chi2}, part (b)) conclude the proof.
\end{proof}

\begin{proof}[Proof of Lemma \ref{lem:merging}]
The proof consists of four steps.
\begin{enumerate}[label=(\roman*)]
\item Let
\begin{equation*}
\Theta_\star = \Theta_\star(\eta_1, \eta_2) := \left\{(\sigma, \xi, \bm{\beta}):
\begin{split}
& \sigma^2/\sigma_\star^2 \in [1, 1+\eta_1\epsilon_n^2],\\
&\xi = \xi_\star,\\
& |\beta_j| \le \sigma z_{0n}, j \not \in \xi_\star,\\
& |\beta_j - \beta^\star_j | \le \eta_2 \sigma \epsilon_n/s, j \in \xi_\star
\end{split}
\right\}.
\end{equation*}
It is obvious that
$$\int \frac{\mc{N}(\mbf{Y}| \mbf{X}\bm{\beta}, \sigma^2\mbf{I})}{\mc{N}(\mbf{Y}| \mbf{X}\bm{\beta}^\star, \sigma_\star^2\mbf{I})} d\Pi \ge \Pi(\Theta_\star) \inf_{ (\sigma, \xi, \bm{\beta}) \in \Theta_\star} \frac{\mc{N}(\mbf{Y}| \mbf{X}\bm{\beta}, \sigma^2\mbf{I})}{\mc{N}(\mbf{Y}| \mbf{X}\bm{\beta}^\star, \sigma_\star^2\mbf{I})},$$
\item Prove that if $\eta_2 < 1$ then
$$\Pi(\Theta_\star) \gtrsim e^{-[A_1+A_3+1-o(1)]s\log p/2}.$$
\item Prove the contrapositive of the implication statement
\begin{align*}
\inf_{ (\sigma, \xi, \bm{\beta}) \in \Theta_\star} & \frac{\mc{N}(\mbf{Y}| \mbf{X}\bm{\beta}, \sigma^2\mbf{I})}{\mc{N}(\mbf{Y}| \mbf{X}\bm{\beta}^\star, \sigma_\star^2\mbf{I})} \le e^{-[\eta_1 + \eta_2^2/2 + \sqrt{C}\eta_2+\eta_3+o(1)]s\log p}\\
&\Longrightarrow \Vert \mbf{P}_{\xi_\star} \bm{\varepsilon}\Vert^2 \ge C n\epsilon_n^2~\text{or}~\Vert \bm{\varepsilon}\Vert^2 \ge 4n,
\end{align*}
for any small $\eta_3 >0$.
\item Prove $$\Pn \left(\Vert \mbf{X}_{\xi_\star} \bm{\varepsilon}\Vert^2 \ge C n\epsilon_n^2 ~\text{or}~ \Vert \bm{\varepsilon}\Vert^2 \ge 4n \right) \lesssim e^{-[C/2-o(1)]n\epsilon_n^2}.$$
\end{enumerate}
Setting sufficiently small $\eta_1,\eta_2,\eta_3$ and suitable $C$ such that
$$\eta > \eta_1 + \eta_2^2/2 + \sqrt{C}\eta_2 + \eta_3, ~~~ C_\eta > C/2$$
completes the proof.

\noindent \textit{Proof of (ii):}
\begin{align*}
\Pi(\Theta_\star)
&= \int_{\sigma_\star^2}^{(1+\eta_1\epsilon_n^2)\sigma_\star^2} g(\sigma^2)d\sigma^2 \times \pi(\xi_\star)\\
&~~ \times \Pi\left(\left. \max_{j \not \in \xi} |\beta_j| \le \sigma z_{0n} \right| \xi = \xi_\star, \sigma\right) \\
&~~ \times \prod_{j \in \xi_\star} \int_{\beta^\star_j - \eta_2 \sigma \epsilon_n/s}^{\beta^\star_j + \eta_2 \sigma \epsilon_n/s} h_1\left(\frac{\beta_j}{\sigma}\right)d\left(\frac{\beta_j}{\sigma}\right). 
\end{align*}
The first term, due to Assumption \ref{asm1}(a), is bounded from below as
$$\int_{\sigma_\star^2}^{\sigma_\star^2(1+\eta_1 \epsilon_n^2)} g(\sigma^2)d \sigma^2 \ge \eta_1 \sigma_\star^2 \epsilon_n^2 g(\sigma_\star^2)/2 \gtrsim e^{-c s\log p},$$
for any small $c > 0$. The second term, due to Assumption \ref{asm1}(b), is bounded from below as
$$\pi(\xi_\star) \succeq p^{-A_1 s}.$$
The third term, due to Assumption \ref{asm1}(c), is bounded from below as
$$\Pi\left(\left. \max_{j \in \xi^c} |\beta_j| \le \sigma z_{0n} \right| \xi = \xi_\star, \sigma\right) \ge 1 -e^{-n + \log p}.$$
The fourth term, due to Assumption \ref{asm1}(d), is bounded from below as
\begin{align*}
&~~~\prod_{j \in \xi_\star} \int_{\beta^\star_j / \sigma - \eta_2 \epsilon_n / s}^{\beta^\star_j / \sigma + \eta_2 \epsilon_n / s} h_1(z)dz\\
&\ge \left(2\eta_2 \sqrt{\frac{\log p}{ns}} \times \inf\left\{h_1(z): |z| \le \max_{j \in \xi_\star} |\beta^\star_j/\sigma| + \eta_2 \epsilon_n / s \right\}\right)^s\\
&\ge \left(2\eta_2 \sqrt{\frac{\log p}{ns}} \times \inf\left\{h_1(z): |z| \le z_{1n}\right\}\right)^s \gtrsim p^{-(A_3 + 1)s}.
\end{align*}

\noindent \textit{Proof of (iii):} we are going to show that, given $\Vert \mbf{P}_{\xi_\star} \bm{\varepsilon}\Vert^2 < C n\epsilon_n^2$ and $\Vert \bm{\varepsilon}\Vert^2 < 4n$, the density ratio
$$\Lambda = \frac{\mc{N}(\mbf{Y}| \mbf{X}\bm{\beta}, \sigma^2\mbf{I})}{\mc{N}(\mbf{Y}| \mbf{X}\bm{\beta}^\star, \sigma_\star^2\mbf{I})} \gtrsim e^{-(\eta_1 + \eta_2^2/2 + \sqrt{C}\eta_2 +\eta_3)s\log p}$$
for any $(\sigma, \xi, \bm{\beta}) \in \Theta_\star$ and any samll constant $\eta_3 > 0$. Write
\begin{align*}
-2\log \Lambda &= \Vert \sigma_\star \bm{\varepsilon} + \mbf{X}(\bm{\beta}^\star - \bm{\beta})\Vert^2/\sigma^2 - \Vert \bm{\varepsilon} \Vert^2 + 2n\log(\sigma^2/\sigma_\star^2)\\
&= (\sigma_\star^2/\sigma^2-1) \Vert\bm{\varepsilon}\Vert^2 + \Vert \mbf{X}(\bm{\beta}^\star - \bm{\beta})/\sigma\Vert^2 + 2\sigma_\star \bm{\varepsilon}^\T \mbf{X}_{\xi_\star}(\bm{\beta}_{\xi_\star} - \bm{\beta})/\sigma^2\\
&~~+ 2\sigma_\star \bm{\varepsilon}^\T \mbf{X}_{\xi_\star^c}\bm{\beta}_{\xi_\star^c}/\sigma^2 + 2n\log(\sigma^2/\sigma_\star^2)\\
&\le \Vert \mbf{X}(\bm{\beta}^\star - \bm{\beta})/\sigma\Vert^2 + 2\Vert \mbf{X}_{\xi_\star}^\T \bm{\varepsilon} \Vert \Vert (\bm{\beta}_{\xi_\star} - \bm{\beta}_{\xi_\star})/\sigma \Vert\\
&~~+ 2\Vert  \bm{\varepsilon}\Vert \Vert  \mbf{X}_{\xi_\star^c}\bm{\beta}_{\xi_\star^c}/\sigma\Vert + 2n\log(\sigma^2/\sigma_\star^2). 
\end{align*}
where the definition of $\Theta_\star$ enforces restrictions that
\begin{align*}
\log(\sigma^2/\sigma_\star^2) &\le \eta_1 \epsilon_n^2,~~~ 
\Vert \bm{\beta}^\star_{\xi_\star} - \bm{\beta}_{\xi_\star}\Vert
\le \eta_2 \sigma \epsilon_n/\sqrt{s},~~~ 
\Vert \mbf{X}_{\xi_\star^c}\bm{\beta}_{\xi_\star^c}\Vert
\le \sigma \sqrt{n}pz_{0n},\\
\Vert \mbf{X}(\bm{\beta}^\star - \bm{\beta})\Vert
&\le \Vert \mbf{X}_{\xi_\star^c} \bm{\beta}_{\xi_\star^c}\Vert + \Vert \mbf{X}_{\xi_\star}(\bm{\beta}^\star_{\xi_\star} - \bm{\beta}_{\xi_\star})\Vert \le \sigma \sqrt{n}pz_{0n} + \sqrt{ns} \times \frac{\eta_2 \sigma \epsilon_n}{\sqrt{s}},
\end{align*}
and the premise derives that $\Vert \bm{\varepsilon} \Vert < 2\sqrt{n}$ and that
$$\Vert \mbf{X}_{\xi_\star}^\T \bm{\varepsilon} \Vert^2 \le \lambda_{\max}(\mbf{X}_{\xi_\star}^\T\mbf{X}_{\xi_\star})\Vert \mbf{P}_{\xi_\star} \bm{\varepsilon} \Vert^2 < ns \times Cn\epsilon_n^2.$$
Collecting these pieces together yields the desired lower bound for the density ratio.

\noindent \textit{Proof of (iv):} it follows from the facts that $\Vert \mbf{P}_{\xi_\star} \bm{\varepsilon}\Vert^2 \sim \chi_s^2$, $\Vert \bm{\varepsilon}\Vert^2 \sim \chi_n^2$ and the probability bounds of the chi-squared distribution (Lemma \ref{lem:chi2}).
\end{proof}

\begin{proof}[Proof of Lemma \ref{lem:test3}, part (a)]
We first show that
\begin{equation} \label{schur}
\min_{\xi \not \supseteq \xi_\star: |\xi\setminus\xi_\star| \le Ks} \left\Vert \left(\mbf{P}_{\xi \cup \xi_\star} - \mbf{P}_{\xi} \right)\mbf{X}_{\xi_\star} \bm{\beta}^\star_{\xi_\star}\right\Vert \ge M_3\sigma_\star \sqrt{n}\epsilon_n.
\end{equation}
Indeed, for any $\xi \not \supseteq \xi_\star$ with set difference $|\xi\setminus\xi_\star| \le Ks$,
\begin{align*}
\left\Vert \left(\mbf{P}_{\xi \cup \xi_\star} - \mbf{P}_{\xi}\right)\mbf{X}_{\xi_\star} \bm{\beta}^\star_{\xi_\star}\right\Vert^2
&= \left\Vert \left(\mbf{I} - \mbf{P}_{\xi} \right)\mbf{X}_{\xi_\star} \bm{\beta}^\star_{\xi_\star}\right\Vert^2\\
&= \left\Vert \left(\mbf{I} - \mbf{P}_{\xi} \right)\mbf{X}_{\xi_\star \setminus \xi} \bm{\beta}^\star_{\xi_\star \setminus \xi}\right\Vert^2\\
&= \bm{\beta}_{\xi_\star \setminus \xi}^\T\mbf{X}_{\xi_\star \setminus \xi}^\T \left(\mbf{I} - \mbf{P}_{\xi} \right) \mbf{X}_{\xi_\star \setminus \xi} \bm{\beta}_{\xi_\star \setminus \xi}
\end{align*}
Note that $\mbf{X}_{\xi_\star \setminus \xi}^\T \left(\mbf{I} - \mbf{P}_{\xi} \right) \mbf{X}_{\xi_\star \setminus \xi}$ is the Schur complement of the principal submatrix $\mbf{X}_{\xi_\star \setminus \xi}^\T\mbf{X}_{\xi_\star \setminus \xi}$ in the matrix $\mbf{X}_{\xi \cup \xi_\star}^\T\mbf{X}_{\xi \cup \xi_\star}$. Thus, by Lemma \ref{lem:schur},
\begin{equation*}
\lambda_{\min}\left(\mbf{X}_{\xi_\star \setminus \xi}^\T \left(\mbf{I} - \mbf{P}_{\xi} \right) \mbf{X}_{\xi_\star \setminus \xi}\right) \ge \lambda_{\min}\left(\mbf{X}_{\xi \cup \xi_\star}^\T\mbf{X}_{\xi \cup \xi_\star}\right) \ge n\lambda.   
\end{equation*}
It further leads to \eqref{schur} as
\begin{align*}
\left\Vert \left(\mbf{P}_{\xi \cup \xi_\star}  - \mbf{P}_{\xi} \right)\mbf{X}_{\xi_\star} \bm{\beta}^\star_{\xi_\star}\right\Vert^2
\ge n\lambda \Vert \bm{\beta}^\star_{\xi_\star \setminus \xi} \Vert^2
\ge n\lambda |\xi_\star \setminus \xi| \min_{j \in \xi_\star} |\beta^\star_j|^2
\ge M_3^2 \sigma_\star^2 n \epsilon_n^2.
\end{align*}
From the fact that $\mbf{P}_{\xi \cup \xi_\star} \le \mbf{P}_{\xi} + \mbf{P}_{\xi_\star}$, it follows that
\begin{align*}
\phi_3 &= 1\left\{ \min_{\xi \not \supseteq \xi_\star: |\xi\setminus\xi_\star| \le Ks} \left\Vert \left(\mbf{P}_{\xi \cup \xi_\star} - \mbf{P}_{\xi} \right)\left(\mbf{X}_{\xi_\star}\bm{\beta}^\star_{\xi_\star} + \sigma_\star \bm{\varepsilon}\right) \right\Vert < M_3\sigma_\star\sqrt{n}\epsilon_n/2\right\}\\
&\le 1\left\{\min_{\xi \not \supseteq \xi_\star: |\xi\setminus\xi_\star| \le Ks} \left\Vert \left(\mbf{P}_{\xi \cup \xi_\star} - \mbf{P}_{\xi} \right) \mbf{X}_{\xi_\star} \bm{\beta}^\star_{\xi_\star} \right\Vert \right.\\
&~~~~~~~~~\left.- \max_{\xi \not \supseteq \xi_\star: |\xi\setminus\xi_\star| \le Ks} \left\Vert \left(\mbf{P}_{\xi \cup \xi_\star}  - \mbf{P}_{\xi} \right) \sigma_\star \bm{\varepsilon} \right\Vert < M_3\sigma_\star \sqrt{n}\epsilon_n/2\right\}\\
&\le 1\left\{\max_{\xi \not \supseteq \xi_\star: |\xi\setminus\xi_\star| \le Ks} \left\Vert \left(\mbf{P}_{\xi \cup \xi_\star}  - \mbf{P}_{\xi} \right) \sigma_\star \bm{\varepsilon} \right\Vert > M_3\sigma_\star \sqrt{n}\epsilon_n/2\right\}\\
&\le 1\left\{ \left\Vert \mbf{P}_{\xi_\star} \bm{\varepsilon} \right\Vert > M_3 \sqrt{n}\epsilon_n/2\right\}.
\end{align*}
Putting it together with the tail probability bound of the chi-square distribution (Lemma \ref{lem:chi2}) yields
$$\En \phi_3 \le \P\left(\chi^2_s > M_3 n\epsilon_n^2/4\right) \le  e^{-[M_3^2/8 - o(1)] n\epsilon_n^2}.$$
\end{proof}

\begin{proof}[Proof of Lemma \ref{lem:test3}, part(b)]
Define
\begin{align*}
\phi_{3,\gamma} &= 1\left\{ \left\Vert \left(\mbf{P}_{\gamma \cup \xi_\star} - \mbf{P}_{\gamma} \right) \mbf{Y} \right\Vert  < M_3\sigma_\star \sqrt{n}\epsilon_n/2\right\},\\
\Theta_{3,\gamma} &= \{(\sigma, \xi, \bm{\beta}) \in \Theta_3: \xi = \gamma\}.
\end{align*}
Then
\begin{align*}
\phi_3 &= \max_{\gamma \not \supseteq \xi_\star: |\gamma \setminus \xi_\star| \le Ks} \phi_{3,\gamma}, ~
\Theta_3 = \cup_{\gamma \not \supseteq \xi_\star: |\gamma \setminus \xi_\star| \le Ks} \Theta_{3,\gamma}.
\end{align*}
Applying Lemma \ref{lem:split} yields
\begin{align*}
\sup_{(\sigma, \xi, \bm{\beta}) \in \Theta_3}\Ea(1 - \phi_3)
&\le \max_{\gamma \not \supseteq \xi_\star: |\gamma \setminus \xi_\star| \le Ks} ~\sup_{(\sigma, \xi, \bm{\beta}) \in \Theta_{3,\gamma}} \Ea (1-\phi_{3,\gamma})\\
&= \sup_{(\sigma, \xi, \bm{\beta}) \in \Theta_3} \Ea (1-\phi_{3,\xi}). 
\end{align*}
We proceed to bound, for any $(\sigma, \xi, \bm{\beta}) \in \Theta_3$,
\begin{align*}
\Ea (1-\phi_{3,\xi}) &= \Pa \left(\left\Vert \left(\mbf{P}_{\xi \cup \xi_\star} - \mbf{P}_{\xi} \right) \mbf{Y} \right\Vert  \ge M_3\sigma_\star \sqrt{n}\epsilon_n/2\right). 
\end{align*}
Under $\Pa$, restrictions $\frac{\sigma^2}{\sigma_\star^2} \not \in \left[\frac{1-M_1\epsilon_n}{1+M_1\epsilon_n}, \frac{1+M_1\epsilon_n}{1-M_1\epsilon_n}\right]$ and $\max_{j \not \in \xi} |\beta_j| \le \sigma z_{0n}$ of $\Theta_1 \subseteq \Theta_0^c$ and the fact that $\mbf{P}_{\xi \cup \xi_\star} \le \mbf{P}_{\xi} + \mbf{P}_{\xi_\star}$ imply that
\begin{align*}
&~~~\left\Vert \left(\mbf{P}_{\xi \cup \xi_\star} - \mbf{P}_{\xi} \right) \mbf{Y} \right\Vert  \ge M_3\sigma_\star \sqrt{n}\epsilon_n/2\\
&\Longrightarrow \left\Vert \left(\mbf{P}_{\xi \cup \xi_\star} - \mbf{P}_{\xi} \right) (\mbf{X}_{\xi^c} \bm{\beta}_{\xi^c } + \sigma \bm{\varepsilon}) \right\Vert  \ge M_3\sigma_\star \sqrt{n}\epsilon_n/2\\
&\Longrightarrow \Vert \mbf{X}_{\xi^c} \bm{\beta}_{\xi^c } \Vert + \sigma \Vert P_{\xi_\star} \bm{\varepsilon} \Vert  \ge M_3\sigma_\star \sqrt{n}\epsilon_n/2\\
&\Longrightarrow  \sigma \epsilon_n + \sigma \Vert P_{\xi_\star} \bm{\varepsilon} \Vert  \ge M_3\sigma_\star \sqrt{n}\epsilon_n/2\\
&\Longrightarrow\Vert P_{\xi_\star} \bm{\varepsilon} \Vert  \ge \left(\frac{M}{2}\sqrt{\frac{1-M_1\epsilon_n}{1+M_1\epsilon_n}} -\frac{pz_{0n}}{\epsilon_n} \right) \sqrt{n}\epsilon_n.
\end{align*}
Thus,
$$\Pa (1-\phi_{3,\xi}) \le  \P \left( \chi^2_s \ge \left(\frac{M_3}{2}\sqrt{\frac{1-M_1\epsilon_n}{1+M_1\epsilon_n}} -\frac{pz_{0n}}{\epsilon_n} \right)^2 n\epsilon_n^2 \right).$$
Note that this bound holds uniformly for all $(\sigma, \xi, \bm{\beta}) \in \Theta_3$ and that Assumption \ref{asm1}(c) derives that $pz_{0n}/\epsilon_n \to 0$. Therefore, the probability bound of the chi-square distribution (Lemma \ref{lem:chi2}, part (b)) concludes the proof.
\end{proof}

\begin{proof}[Proof of Lemma \ref{lem:omega}]
First, for any full-rank overfitted model $\xi$ of size $t \le (K+1)s$,
\begin{align*}
\P\left(\Vert (\mbf{P}_\xi - \mbf{P}_{\xi_\star})\bm{\varepsilon}\Vert^2
\ge 2(1+\eta)(t-s)\log p\right)
&\le \P\left(\chi_{t-s}^2 \ge 2(1+\eta)(t-s)\log p\right)\\
&\lesssim p^{-(1+\eta/2)(t-s)},
\end{align*}
due to the probability bound of the chi-squared distribution (Lemma \ref{lem:chi2}(b)). It follows that
$$\P(\Omega_1(\eta)) \le \sum_{t=s+1}^{(K+1)s} p^{t-s} \times p^{-(1+\eta/2)(t-s)} \le \sum_{j=1}^{Ks} p^{-\eta j/2} \asymp p^{-\eta/2}.$$
On the other hand, due to Lemma \ref{lem:chi2}(a), $\P(\Omega_2) \le e^{-3n/8}$. Collecting these two pieces together completes the proof.
\end{proof}

\begin{proof}[Proof of Lemma \ref{lem:ratio}]
For simplicity of notation, define
\begin{align*}
\mc{A} &:= \left[\sigma_\star^2 \times \frac{1-M_1\epsilon_n}{1+M_1\epsilon_n}, \sigma_\star^2 \times \frac{1+M_1\epsilon_n}{1-M_1\epsilon_n}\right],\\ 
\mc{B} &:= \left[-\sigma z_{0n},+\sigma z_{0n}\right],\\
\mc{C} &:= \{\bm{v} \in \mathbb{R}^s: \Vert \bm{v} - \bm{\beta}^\star_{\xi_\star} \Vert \le M_2\sigma_\star\epsilon_n\},\\
\mc{C}' &:= \left\{\bm{v} \in \mathbb{R}^s: \Vert \bm{v} - \bm{\beta}^\star_{\xi_\star}\Vert \le \sqrt{M_2^2\sigma_\star^2\epsilon_n^2 -(t-s)\sigma^2 z_{0n}^2}\right\} \subseteq \mc{C}.
\end{align*}
Write
\begin{align}
\frac{\Pi(\widetilde{\Theta}|\mbf{X},\mbf{Y},\xi)}{\Pi(\widetilde{\Theta}|\mbf{X},\mbf{Y},\xi_\star)}
&= \frac{\int_{\mc{A}} \int_{\mc{B}^{p-t}}\Pi(\widetilde{\Theta} |\mbf{X},\mbf{Y},\sigma, \xi, \bm{\beta}_{\xi^c})h_0(\bm{\beta}_{\xi^c}/\sigma)d(\bm{\beta}_{\xi^c}/\sigma) g(\sigma^2)d\sigma^2 }{\int_{\mc{A}} \int_{\mc{B}^{p-t}} \Pi(\widetilde{\Theta} |\mbf{X},\mbf{Y},\sigma, \xi_\star, \bm{\beta}_{\xi^c}) h_0(\bm{\beta}_{\xi^c}/\sigma)d(\bm{\beta}_{\xi^c}/\sigma)g(\sigma^2)d\sigma^2} \nonumber\\
&\le \sup_{\sigma^2 \in \mc{A},\bm{\beta}_{\xi^c} \in \mc{B}^{p-t}} \frac{\Pi(\widetilde{\Theta}|\mbf{X},\mbf{Y},\sigma, \xi, \bm{\beta}_{\xi^c})}{\Pi(\widetilde{\Theta}|\mbf{X},\mbf{Y},\sigma, \xi_\star, \bm{\beta}_{\xi^c})}. \label{eqn:ratio}
\end{align}
For any $\sigma^2 \in \mc{A},~\bm{\beta}_{\xi^c} \in \mc{B}^{p-t}$, the numerator of \eqref{eqn:ratio} is bounded from above as
\begin{align*}
&= \int_{\bm{\beta}_\xi: \Vert \bm{\beta}_\xi - \bm{\beta}^\star_\xi \Vert \le M_2\sigma_\star\epsilon_n} \mc{N}(\mbf{Y}|\mbf{X}\bm{\beta}, \sigma^2\mbf{I})h_1(\bm{\beta}_\xi/\sigma)d(\bm{\beta}_\xi/\sigma)\\
&\le \left[\sup_{\bm{\beta}_\xi:~\Vert \bm{\beta}_\xi - \bm{\beta}^\star_\xi \Vert \le M_2\sigma_\star\epsilon_n} h_1(\bm{\beta}_\xi/\sigma) \right] \int \mc{N}(\mbf{Y}|\mbf{X}\bm{\beta}, \sigma^2\mbf{I})d(\bm{\beta}_\xi/\sigma)\\
&\le \sup_{\bm{z} \in \mc{C}/\sigma} h_1(\bm{z}) \times \left[\sup_z h_1(z)\right]^{t - s} \times \int \mc{N}(\mbf{Y}|\mbf{X}\bm{\beta}, \sigma^2\mbf{I})d(\bm{\beta}_\xi/\sigma). 
\end{align*}
On the other hand, the denominator of \eqref{eqn:ratio} is bounded from below as
\begin{align*}
&\ge \int_{\bm{\beta}_\xi: \bm{\beta}_{\xi \setminus \xi_\star} \in \mc{B}^{t-s},~\bm{\beta}_{\xi_\star} \in \mc{C}'} \mc{N}(\mbf{Y}|\mbf{X}\bm{\beta}, \sigma^2\mbf{I})h_1(\bm{\beta}_{\xi_\star}/\sigma)h_0(\bm{\beta}_{\xi\setminus\xi_\star}/\sigma)d(\bm{\beta}_\xi/\sigma)\\
&\ge \inf_{\bm{\beta}_{\xi \setminus \xi_\star} \in \mc{B}^{t - s}} \int_{\mc{C}'} \mc{N}(\mbf{Y}|\mbf{X}\bm{\beta}, \sigma^2\mbf{I})h_1(\bm{\beta}_{\xi_\star}/\sigma)d(\bm{\beta}_{\xi_\star}/\sigma) \\
&~~~\times \int_{\mc{B}^{t - s}} h_0(\bm{\beta}_{\xi \setminus \xi_\star}/\sigma)d(\bm{\beta}_{\xi \setminus \xi_\star}/\sigma)\\
&\ge \inf_{\bm{\beta}_{\xi \setminus \xi_\star} \in \mc{B}^{t - s}} \left[\inf_{\bm{\beta}_{\xi_\star} \in \mc{C}'} h_1(\bm{\beta}_{\xi_\star}/\sigma) \times \int \mc{N}(\mbf{Y}|\mbf{X}\bm{\beta}, \sigma^2\mbf{I})d(\bm{\beta}_{\xi_\star}/\sigma)\right]\\
&~~\times \int_{\mc{B}^{t - s}} h_0(\bm{\beta}_{\xi \setminus \xi_\star}/\sigma)d(\bm{\beta}_{\xi \setminus \xi_\star}/\sigma)\\
&\ge \inf_{\bm{z} \in \mc{C}/\sigma} h_1(\bm{z}) \times \left[\int_{\mc{B}/\sigma} h_0(z)dz\right]^{t-s} \times \inf_{\bm{\beta}_{\xi \setminus \xi_\star} \in \mc{B}^{t - s}} \int \mc{N}(\mbf{Y}|\mbf{X}\bm{\beta}, \sigma^2\mbf{I})d(\bm{\beta}_{\xi_\star}/\sigma).
\end{align*}
Comparison of the last two displays reveals that \eqref{eqn:ratio} is upper bounded by the product of three terms, which are bounded as follows.
\begin{enumerate}[label=(\roman*)]
\item Due to condition (a) of Theorem \ref{thm3},
\begin{align*}
\frac{\sup_{\bm{z} \in \mc{C}/\sigma} h_1(\bm{z})}{\inf_{\bm{z} \in \mc{C}/\sigma} h_1(\bm{z})}
&\le \frac{\sup_{\bm{z}: \Vert \bm{z} - \bm{\beta}^\star_{\xi_\star}/\sigma\Vert_1 \le M_2\sqrt{s}\sigma_\star\epsilon_n/\sigma} h_1(\bm{z})}{\inf_{\bm{z}: \Vert \bm{z} - \bm{\beta}^\star_{\xi_\star}/\sigma\Vert_1 \le M_2\sqrt{s}\sigma_\star\epsilon_n/\sigma} h_1(\bm{z})}\\
&\le e^{L_n \times 2M_2\sqrt{s}\sigma_\star\epsilon_n/\sigma} \lesssim e^{\eta\log p/2}.
\end{align*}
\item Due to Assumption \ref{asm1}(c),
$$\left(\frac{\sup_z h_1(z)}{\int_{\mc{B}/\sigma} h_0(z)dz}\right)^{t-s} \le \frac{\sup_z h_1(z)}{1-(t-s)e^{-n}} \le 2 \left(\sup_z h_1(z)\right)^{t-s},$$
for sufficiently large $n$ such that $n \ge \log(2Ks)$.
\item Some elementary calculus gives that
$$\frac{\int \mc{N}(\mbf{Y}|\mbf{X}\bm{\beta}, \sigma^2\mbf{I})d(\bm{\beta}_\xi/\sigma)}{\int \mc{N}(\mbf{Y}|\mbf{X}\bm{\beta}, \sigma^2\mbf{I})d(\bm{\beta}_{\xi_\star}/\sigma)} = \frac{\det{\left|2\pi (\mbf{X}_\xi^\T\mbf{X}_\xi)^{-1}\right|^{1/2}}}{\det{\left|2\pi (\mbf{X}_{\xi_\star}^\T\mbf{X}_{\xi_\star})^{-1}\right|^{1/2}}} \times \exp\left\{\frac{ R_{\xi_\star}^2 - R_{\xi}^2 }{2\sigma^2}\right\},$$
with $R_{\gamma}^2 = \Vert (\mbf{I}-\mbf{P}_\gamma)(\sigma_\star \bm{\varepsilon} - \mbf{X}_{\gamma^c}\bm{\beta}_{\gamma^c})\Vert^2$. By Lemma \ref{lem:schur} and Assumption \ref{asm2},
\begin{align*}
\frac{\det{\left|\mbf{X}_\xi^\T\mbf{X}_\xi\right|}}{\det{\left|\mbf{X}_{\xi_\star}^\T\mbf{X}_{\xi_\star}\right|}}
&= \det{\left|\mbf{X}_{\xi \setminus \xi_\star}^\T(\mbf{I}-\mbf{P}_{\xi_\star})\mbf{X}_{\xi \setminus \xi_\star} \right|}\\
&\ge \left[\lambda_{\min}\left(\mbf{X}_{\xi \setminus \xi_\star}^\T(\mbf{I}-\mbf{P}_{\xi_\star})\mbf{X}_{\xi \setminus \xi_\star}\right)\right]^{t-s}\\
&\ge \left[\lambda_{\min}\left(\mbf{X}_\xi^\T \mbf{X}_\xi\right)\right]^{t-s} \ge (n\lambda)^{t-s}.
\end{align*}
Conditionally on $\Omega(\eta)$ defined in Lemma \ref{lem:omega},
\begin{align*}
R_{\xi_\star}^2 - R_{\xi}^2
&= \Vert (\mbf{P}_\xi - \mbf{P}_{\xi_\star})(\sigma_\star \bm{\varepsilon} - \mbf{X}_{\xi^c}\bm{\beta}_{\xi^c})\Vert^2 + \Vert (\mbf{I} - \mbf{P}_{\xi_\star})\mbf{X}_{\xi \setminus \xi_\star}\bm{\beta}_{\xi \setminus \xi_\star}\Vert^2\\
&~~-2(\sigma_\star \bm{\varepsilon} - \mbf{X}_{\xi^c}\bm{\beta}_{\xi^c})^\T(\mbf{I} - \mbf{P}_{\xi_\star})\mbf{X}_{\xi \setminus \xi_\star}\bm{\beta}_{\xi \setminus \xi_\star}\\
&\le [\sigma_\star \Vert (\mbf{P}_\xi - \mbf{P}_{\xi_\star})\bm{\varepsilon}\Vert + \Vert \mbf{X}_{\xi^c}\bm{\beta}_{\xi^c}\Vert]^2 + \Vert \mbf{X}_{\xi \setminus \xi_\star}\bm{\beta}_{\xi \setminus \xi_\star}\Vert^2\\
&~~+ 2 \sigma_\star \Vert \bm{\varepsilon} \Vert \Vert \mbf{X}_{\xi \setminus \xi_\star}\bm{\beta}_{\xi \setminus \xi_\star} \Vert + 2 \Vert \mbf{X}_{\xi^c}\bm{\beta}_{\xi^c}\Vert \Vert \mbf{X}_{\xi \setminus \xi_\star}\bm{\beta}_{\xi \setminus \xi_\star}\Vert\\
&\le [\sigma_\star \sqrt{(2+\eta)(t-s)\log p} + \sigma \sqrt{n}pz_{0n}]^2 + [\sigma \sqrt{n} (t-s) z_{0n}]^2\\
&~~+ 4 \sigma_\star \sqrt{n} \times \sigma \sqrt{n}(t-s)z_{0n} + 2\sigma \sqrt{n}pz_{0n} \times \sigma\sqrt{n}(t-s)z_{0n}\\
&\le (2+3\eta/2)(t-s)\log p,
\end{align*}
for sufficiently large $n \ge N$, with $N$ not depending on $\xi$.
\end{enumerate}
Combining (i)-(iii) with \eqref{eqn:ratio} concludes the proof.
\end{proof}

\section{Preliminary Lemmas}

\begin{lemma}[{\citet[Theorem 1.1]{pelekis2016lower}}] \label{lem:pelekis}
For a Binomial distributed random variable $T \sim \mathrm{Binomial}(p, \mu)$, if $p \mu < t \le p -1$ then
$$\mbb{P}\left(T \ge t\right) \le \frac{\mu^{2(\tilde{t}+1)}}{2} \left. {p \choose \tilde{t}+1}  \right/ {t \choose \tilde{t}+1},$$
where $\tilde{t} = \lfloor (t - p\mu)/(1-\mu) \rfloor < t$.
\end{lemma}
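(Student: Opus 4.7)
The statement is lifted verbatim from \citet[Theorem 1.1]{pelekis2016lower}, so the most direct route is simply to invoke that result as a black box. For completeness, I sketch the shape of the argument one would need to reconstruct the bound from scratch.

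The natural starting point is the factorial-moment identity $\mathbb{E}[\binom{T}{k}] = \binom{p}{k}\mu^{k}$ with the choice $k = \tilde{t}+1$. Because $\tilde{t}+1 \le t$, the event $\{T \ge t\}$ coincides with $\{\binom{T}{k} \ge \binom{t}{k}\}$, so Markov's inequality immediately yields $\mathbb{P}(T \ge t) \le \binom{p}{k}\mu^{k}/\binom{t}{k}$. This elementary bound falls short of the target by a multiplicative factor of $\mu^{k}/2$, so the remaining task is to extract an additional $\mu^{k}$ (and the constant $1/2$) that a purely linear Markov argument cannot see.

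The extra factor of $\mu^{k}$ should come from a second-moment/Paley--Zygmund style refinement applied to $Z = \binom{T}{k}$. Using the explicit expansion
\[
\mathbb{E}[Z^{2}] \;=\; \sum_{j=0}^{k}\binom{p}{k}\binom{k}{j}\binom{p-k}{k-j}\,\mu^{2k-j},
\]
one would combine the Cauchy--Schwarz bound $\mathbb{E}[Z\,\mathbf{1}\{T\ge t\}]^{2} \le \mathbb{E}[Z^{2}]\,\mathbb{P}(T\ge t)$ with the deterministic inequality $\mathbb{E}[Z\,\mathbf{1}\{T\ge t\}] \ge \binom{t}{k}\,\mathbb{P}(T\ge t)$, and invert the resulting quadratic inequality to isolate $\mathbb{P}(T\ge t)$. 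The specific threshold $\tilde{t} = \lfloor(t - p\mu)/(1-\mu)\rfloor$ is engineered so that the mean $\mathbb{E}[Z]$ sits in a definite relation to the critical value $\binom{t}{k}$, allowing the quadratic to be solved explicitly with the factor $1/2$ emerging from its discriminant.

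The main obstacle I anticipate is the combinatorial accounting inside $\mathbb{E}[Z^{2}]$. The subleading ($j \ge 1$) terms of the expansion carry strictly lower powers of $\mu$, and under a naive bound they would swamp the leading $j = 0$ contribution; controlling them requires a precise exploitation of the defining relation for $\tilde{t}$ together with the hypothesis $p\mu < t$. Threading that balance cleanly, and carrying the constant $1/2$ through the final inversion, is the delicate step, and it is exactly what \citet{pelekis2016lower} carries out in detail.
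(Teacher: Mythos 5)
The black-box part of your proposal does match the paper: no proof of this lemma appears anywhere in the text, it is simply imported from \citet[Theorem 1.1]{pelekis2016lower}. The reconstruction sketch, however, cannot work. Writing $k=\tilde t+1$ and chaining your two displayed inequalities gives $\binom{t}{k}^2\,\mathbb{P}(T\ge t)^2\le \mathbb{E}[Z\,\mathbf{1}\{T\ge t\}]^2\le \mathbb{E}[Z^2]\,\mathbb{P}(T\ge t)$, hence $\mathbb{P}(T\ge t)\le \mathbb{E}[Z^2]/\binom{t}{k}^2$. By Jensen, $\mathbb{E}[Z^2]\ge \mathbb{E}[Z]^2=\mu^{2k}\binom{p}{k}^2$, so this upper bound is at least $\mu^{2k}\binom{p}{k}^2/\binom{t}{k}^2>\mu^{2k}\binom{p}{k}/\binom{t}{k}$ (since $k\le t<p$), i.e.\ it already exceeds the target $\tfrac{1}{2}\mu^{2k}\binom{p}{k}/\binom{t}{k}$ by more than a factor of $2\binom{p}{k}/\binom{t}{k}$. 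A Cauchy--Schwarz/second-moment step of this shape can only weaken a first-moment Markov bound, never sharpen it by the extra factor $\mu^{k}/2$ you need, so no amount of care with the $j\ge 1$ terms of $\mathbb{E}[Z^2]$ closes the gap.

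More seriously, the inequality as transcribed is false, so no proof can exist and the right move is to check the statement against the source rather than reconstruct it. Take $p=2$, $\mu=0.1$, $t=1$: the hypotheses hold ($p\mu=0.2<1\le p-1$), $\tilde t=\lfloor 0.8/0.9\rfloor=0$, and the right-hand side is $\tfrac{(0.1)^2}{2}\cdot\binom{2}{1}/\binom{1}{1}=0.01$, while $\mathbb{P}(T\ge 1)=1-(0.9)^2=0.19$. Note that the plain factorial-moment bound $\mu^{\tilde t+1}\binom{p}{\tilde t+1}/\binom{t}{\tilde t+1}=0.2$ is nearly attained here, which shows the claimed improvement by $\mu^{\tilde t+1}/2$ cannot hold in general. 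This also breaks the downstream use in Example \ref{example:bernoulli}: for $T\sim\mathrm{Binomial}(p,1/p)$ one has $\mathbb{P}(T\ge 2)\to 1-2e^{-1}\approx 0.26$, which is not bounded by $p^{-1}$, so the claim that the i.i.d.\ Bernoulli prior satisfies Assumption \ref{asm1}(b) with $A_2=1$ does not follow from this lemma as stated.
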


\begin{lemma}[Probability Bounds of Chi-squared Distribution] \label{lem:chi2}
Let $\chi^2_d$ be a chi-squared random variable of degree $d$.
\begin{enumerate}[label=(\alph*)]
\item For any $\epsilon_n$ such that $n\epsilon_n > d_n$,
\begin{align*}
\mbb{P}(\chi^2_{n - d_n} / n  \ge 1 + \epsilon_n) &\le e^{- \min \left\{\frac{(n\epsilon_n+d_n)^2}{8(n - d_n)}, \frac{n\epsilon_n+d_n}{8}\right\}},\\
\mbb{P}(\chi^2_{n - d_n} / n  \le 1 - \epsilon_n) &\le e^{- \min \left\{\frac{(n\epsilon_n-d_n)^2}{8(n - d_n)}, \frac{n\epsilon_n-d_n}{8}\right\}}.
\end{align*}
In addition, if $\epsilon_n \to 0$ but $n\epsilon_n \succ d_n$,
\begin{align*}
\mbb{P}(\chi^2_{n - d_n} / n  \ge 1 + \epsilon_n) &\preceq e^{-(1/8 - o(1))n\epsilon_n^2},\\
\mbb{P}(\chi^2_{n - d_n} / n  \ge 1 + \epsilon_n) &\preceq e^{-(1/8 - o(1))n\epsilon_n^2}.
\end{align*}
\item
$$\mbb{P}(\chi^2_{d_n} \ge t_n) \le e^{-\left(\sqrt{2t_n - d_n} - \sqrt{d_n}\right)^2/4}.$$
In addition, if $t_n \succ d_n$ then for any $\tilde{t}_n$ such that $\tilde{t}_n/t_n \to 1$
$$\mbb{P}(\chi^2_{d_n} \ge t_n) \preceq e^{-[1/2 - o(1)]\tilde{t}_n}.$$
\end{enumerate}
\end{lemma}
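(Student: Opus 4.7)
The plan is to derive both tail bounds from a single Chernoff-Bernstein-type argument applied to the chi-squared moment generating function, and then specialize to the two asymptotic regimes in the ``in addition'' clauses. Recall that if $\chi^2_k = \sum_{i=1}^k Z_i^2$ with $Z_i$ i.i.d.\ standard normal, then $\mathbb{E}[e^{\lambda(\chi^2_k - k)}] = e^{-\lambda k}(1-2\lambda)^{-k/2}$ for $\lambda \in (0, 1/2)$, and a direct Taylor expansion (or the standard Laurent--Massart computation) gives $\mathbb{E}[e^{\lambda(\chi^2_k - k)}] \le e^{k\lambda^2/(1-2\lambda)}$ on the same range. This bound places the centered random variable $\chi^2_k - k$ in the subexponential family with variance proxy $2k$ and scale parameter $2$, so Chernoff's inequality combined with case-by-case optimization of $\lambda$ (quadratic regime $\lambda = u/(2(k+u))$ versus linear regime $\lambda \uparrow 1/4$) yields the standard two-sided Bernstein bound
\[
\mathbb{P}\bigl(|\chi^2_k - k| \ge u\bigr) \le 2\exp\!\left(-\min\Bigl\{\tfrac{u^2}{8k},\tfrac{u}{8}\Bigr\}\right), \qquad u > 0.
\]

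For part (a), the first step is to rewrite the events in canonical form: $\chi^2_{n-d_n}/n \ge 1+\epsilon_n$ is exactly $\chi^2_{n-d_n} \ge (n-d_n) + (n\epsilon_n + d_n)$, and $\chi^2_{n-d_n}/n \le 1 - \epsilon_n$ is $\chi^2_{n-d_n} \le (n-d_n) - (n\epsilon_n - d_n)$. Setting $k = n-d_n$ and $u = n\epsilon_n \pm d_n$ (positive thanks to the hypothesis $n\epsilon_n > d_n$), the displayed Bernstein inequality immediately yields the two finite-sample bounds in (a). For the asymptotic refinement, note that under $\epsilon_n \to 0$ and $n\epsilon_n \succ d_n$ we have $u^2/(8k) = (n\epsilon_n)^2(1+o(1))/(8n) = (1/8-o(1))n\epsilon_n^2$, while $u/8 \succeq n\epsilon_n^2/(\epsilon_n) \succ n\epsilon_n^2$, so the quadratic term in the minimum is the active one; this gives the two $e^{-(1/8-o(1))n\epsilon_n^2}$ statements.

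For part (b), I would invoke the one-sided Laurent--Massart inequality $\mathbb{P}(\chi^2_{d_n} \ge d_n + 2\sqrt{d_n x} + 2x) \le e^{-x}$, which is the sharpest form of the Chernoff computation above. Solving the quadratic $2x + 2\sqrt{d_n x} = t_n - d_n$ in $\sqrt{x}$ gives $\sqrt{x} = (\sqrt{2t_n - d_n} - \sqrt{d_n})/2$, so $x = (\sqrt{2t_n - d_n} - \sqrt{d_n})^2/4$, which is exactly the exponent claimed. For the asymptotic refinement under $t_n \succ d_n$, expand $(\sqrt{2t_n - d_n} - \sqrt{d_n})^2 = 2t_n - 2\sqrt{d_n(2t_n - d_n)} = 2t_n(1 - o(1))$, so the exponent is $[1/2 - o(1)] t_n$, and the flexibility of replacing $t_n$ by any $\tilde t_n$ with $\tilde t_n/t_n \to 1$ is absorbed into the $o(1)$.

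The only delicate point is making sure the minimum in the Bernstein bound is handled cleanly, in particular verifying that $(n\epsilon_n + d_n)/8$ is never smaller than $(n\epsilon_n+d_n)^2/(8(n-d_n))$ until $u \ge k$, so the two regimes glue together correctly at $u = k$; this is a routine algebraic check. Apart from that the proof is standard and consists of bookkeeping after the MGF computation.
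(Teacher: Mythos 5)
Your proposal is correct and follows essentially the same route as the paper, which likewise derives part (a) from the sub-exponential (Bernstein-type) tail of the chi-squared distribution and part (b) from the Laurent--Massart inequality $\mbb{P}(\chi^2_{d}\ge d+2\sqrt{dx}+2x)\le e^{-x}$; you simply supply the MGF computation and the quadratic-solving details that the paper leaves implicit. The only cosmetic point is that you should apply the one-sided Chernoff bounds separately to each tail (each gives $e^{-\min\{u^2/(8k),\,u/8\}}$ with no prefactor) rather than the two-sided version with the factor $2$, since the lemma's displayed inequalities carry no such factor.
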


\begin{proof}
For part (a), the first assertion follows from the sub-exponential tail of chi-squared distribution, and the second assertion is due to
\begin{align*}
[1/8 - o(1)]n\epsilon_n^2 &\preceq \frac{(n\epsilon_n+d_n)^2}{8(n - d_n)} \preceq \frac{n\epsilon_n+d_n}{8},\\
[1/8 - o(1)]n\epsilon_n^2 &\preceq \frac{(n\epsilon_n-d_n)^2}{8(n - d_n)} \preceq \frac{n\epsilon_n-d_n}{8}.
\end{align*}
For part (b), the first assertion is a corollary of \citet[Lemma 1]{laurent2000adaptive}, and the second assertion follows from
$$[1/2 - o(1)]\tilde{t}_n \preceq \left(\sqrt{2t_n - d_n} - \sqrt{d_n}\right)^2/4.$$
\end{proof}

\begin{lemma}\label{lem:split}
For a collection of subspace $\{\Theta_j\}_{j=1}^m$ and a collection of test functions $\{\varphi_j\}_{j=1}^m$
$$\sup_{\theta \in \cup_{j=1}^m \Theta_j} \mbb{E}_\theta \left(1 - \max_{j=1}^m \varphi_j\right) \le \max_{j=1}^m \left\{\sup_{\theta \in \Theta_j} \mbb{E}_\theta(1 - \varphi_j) \right\}.$$
\end{lemma}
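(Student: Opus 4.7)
The plan is a direct pointwise argument followed by taking suprema, with no analytic content beyond the definition of $\max$. First I would fix an arbitrary $\theta \in \cup_{j=1}^m \Theta_j$ and choose some index $j^\star = j^\star(\theta) \in \{1,\dots,m\}$ with $\theta \in \Theta_{j^\star}$ (such an index exists by definition of the union; if more than one works, pick the smallest). Since $\max_{j=1}^m \varphi_j \ge \varphi_{j^\star}$ pointwise on the sample space, monotonicity of expectation gives
$$\mathbb{E}_\theta\!\left(1 - \max_{j=1}^m \varphi_j\right) \le \mathbb{E}_\theta(1 - \varphi_{j^\star}).$$

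The second step discards the dependence on $\theta$ on the right-hand side. Because $\theta \in \Theta_{j^\star}$, we can enlarge to the $\Theta_{j^\star}$-supremum and then to the outer max,
$$\mathbb{E}_\theta(1 - \varphi_{j^\star}) \;\le\; \sup_{\theta' \in \Theta_{j^\star}} \mathbb{E}_{\theta'}(1 - \varphi_{j^\star}) \;\le\; \max_{j=1}^m \sup_{\theta' \in \Theta_j} \mathbb{E}_{\theta'}(1 - \varphi_j).$$
Since the final upper bound does not depend on the initially chosen $\theta$, taking the supremum over $\theta \in \cup_{j=1}^m \Theta_j$ on the left-hand side yields exactly the stated inequality.

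I do not anticipate any genuine obstacle: the cover $\{\Theta_j\}$ need not be disjoint, but that is irrelevant because we only need to exhibit one valid $j^\star(\theta)$ per $\theta$; no measurable selection argument is required since $m$ is finite. In the paper the test functions $\varphi_j$ are indicators, so integrability is automatic. This lemma is precisely the standard device used in the proofs of Lemmas \ref{lem:test1}(b), \ref{lem:test2}(b) and \ref{lem:test3}(b) to reduce a sup over a union of hypothesis regions to the worst of the individual type-II error suprema.
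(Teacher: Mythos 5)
Your proof is correct and is essentially the same argument as the paper's: both reduce the supremum over the union to a per-index bound by noting that for $\theta \in \Theta_j$ one has $1 - \max_k \varphi_k \le 1 - \varphi_j$ pointwise, then pass to the supremum and outer maximum. No gaps.
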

\begin{proof}
\begin{align*}
\sup_{\theta \in \cup_{j=1}^m \Theta_j} \mbb{E}_\theta \left(1 - \max_{j=1}^m \varphi_j\right)
&= \max_{j=1}^m \left\{\sup_{\theta \in \Theta_j} \mbb{E}_\theta \left(1 - \max_{k=1}^m \varphi_k\right) \right\}\\
&= \max_{j=1}^m \left\{\sup_{\theta \in \Theta_j} \mbb{E}_\theta \left( \min_{k=1}^m (1-\varphi_k)\right) \right\}\\
&\le \max_{j=1}^m \left\{\sup_{\theta \in \Theta_j} \mbb{E}_\theta \left( 1-\varphi_j \right) \right\}.
\end{align*}
\end{proof}

\begin{lemma}[{\citet[Part of Corollary 2.4]{liu2005eigenvalue}}] \label{lem:schur}
Let $$\mbf{S} = \left[\begin{array}{c c} \mbf{S}_{11} & \mbf{S}_{12}\\ 
\mbf{S}_{21} & \mbf{S}_{22}\end{array}\right]$$ be a $p \times p$ positive semi-definite matrix with $q \times q$ non-singular principal sub-matrix $\mbf{S}_{11}$ then
$$\lambda_{\min}(\mbf{S}_{22} - \mbf{S}_{21}\mbf{S}_{11}^{-1}\mbf{S}_{12}) \ge \lambda_{\min}(\mbf{S}).$$
\end{lemma}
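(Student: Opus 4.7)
The plan is to reduce the claim to the variational (Rayleigh quotient) characterization of $\lambda_{\min}$. For any unit vector $\mbf{v} \in \mathbb{R}^{p-q}$, I would construct an augmented vector $\mbf{w} \in \mathbb{R}^p$ whose quadratic form against $\mbf{S}$ reproduces exactly $\mbf{v}^\T(\mbf{S}_{22} - \mbf{S}_{21}\mbf{S}_{11}^{-1}\mbf{S}_{12})\mbf{v}$; the positive semi-definiteness of $\mbf{S}$ then immediately delivers the desired lower bound after comparing to $\|\mbf{w}\|^2$.

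The natural choice, motivated by completing the square in the block factorization of $\mbf{S}$, is $\mbf{u} := -\mbf{S}_{11}^{-1}\mbf{S}_{12}\mbf{v}$ and $\mbf{w} := (\mbf{u}^\T, \mbf{v}^\T)^\T$. Using the symmetry of $\mbf{S}$ (so $\mbf{S}_{21} = \mbf{S}_{12}^\T$) and of $\mbf{S}_{11}^{-1}$, a direct expansion gives
\begin{align*}
\mbf{w}^\T \mbf{S}\mbf{w}
&= \mbf{u}^\T \mbf{S}_{11}\mbf{u} + 2\mbf{v}^\T\mbf{S}_{21}\mbf{u} + \mbf{v}^\T \mbf{S}_{22}\mbf{v} \\
&= \mbf{v}^\T \mbf{S}_{21}\mbf{S}_{11}^{-1}\mbf{S}_{12}\mbf{v} - 2\mbf{v}^\T\mbf{S}_{21}\mbf{S}_{11}^{-1}\mbf{S}_{12}\mbf{v} + \mbf{v}^\T \mbf{S}_{22}\mbf{v} \\
&= \mbf{v}^\T(\mbf{S}_{22} - \mbf{S}_{21}\mbf{S}_{11}^{-1}\mbf{S}_{12})\mbf{v}.
\end{align*}
Because $\mbf{S} \ge 0$ implies $\lambda_{\min}(\mbf{S}) \ge 0$, and $\|\mbf{w}\|^2 = \|\mbf{u}\|^2 + \|\mbf{v}\|^2 \ge 1$, the Rayleigh bound yields
$$\mbf{w}^\T \mbf{S}\mbf{w} \ge \lambda_{\min}(\mbf{S})\|\mbf{w}\|^2 \ge \lambda_{\min}(\mbf{S}).$$
Taking the infimum over unit vectors $\mbf{v}\in\mathbb{R}^{p-q}$ then produces $\lambda_{\min}(\mbf{S}_{22} - \mbf{S}_{21}\mbf{S}_{11}^{-1}\mbf{S}_{12}) \ge \lambda_{\min}(\mbf{S})$, which is the claim.

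No serious obstacle stands in the way: the whole proof is the observation that the Schur complement arises precisely from completing the square, and that the choice $\mbf{u} = -\mbf{S}_{11}^{-1}\mbf{S}_{12}\mbf{v}$ is the unique one that kills the cross terms in the block quadratic form. The only subtle point is that the final chain of inequalities relies on both $\|\mbf{w}\|^2 \ge 1$ (automatic, since $\|\mbf{v}\|=1$) and $\lambda_{\min}(\mbf{S}) \ge 0$ (which uses the positive semi-definiteness hypothesis); without the latter, the step $\lambda_{\min}(\mbf{S})\|\mbf{w}\|^2 \ge \lambda_{\min}(\mbf{S})$ would reverse.
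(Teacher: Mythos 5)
Your proof is correct. The paper does not prove this lemma at all---it is imported verbatim from \citet[Corollary 2.4]{liu2005eigenvalue}---so there is no in-paper argument to compare against; your completion-of-the-square construction $\mbf{w} = (-\,(\mbf{S}_{11}^{-1}\mbf{S}_{12}\mbf{v})^\T, \mbf{v}^\T)^\T$, which realizes the Schur-complement quadratic form as $\mbf{w}^\T\mbf{S}\mbf{w}$ and then invokes the Rayleigh bound together with $\Vert\mbf{w}\Vert^2\ge\Vert\mbf{v}\Vert^2=1$ and $\lambda_{\min}(\mbf{S})\ge 0$, is the standard self-contained verification and is complete as written.
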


\end{document}